\newtheorem{thm}{Theorem}[section]
\newtheorem{cor}[thm]{Corollary}
\newtheorem{prop}[thm]{Proposition}
\newtheorem{defin}[thm]{Definition}
\newtheorem{lem}[thm]{Lemma}
\newtheorem{claim}[thm]{Claim}
\newtheorem*{thm*}{Theorem}
\newtheorem*{prop*}{Proposition}
\newtheorem*{defin*}{Definition}
\newtheorem*{lem*}{Lemma}
\newtheorem*{claim*}{Claim}
\newtheorem{thmintro}{Theorem}
\theoremstyle{remark}
\newtheorem{rem}[thm]{Remark}
\newtheorem*{rem*}{Remark}
\title{Eigenvalues control for a Finsler--Laplace operator}
\author{Thomas Barthelm\'e }
\address{Insitut de Math\'ematiques, Universit\'e de Neuch\^atel}
\email{thomas.barthelme@unine.ch}
\urladdr{https://sites.google.com/site/thomasbarthelme/}
\thanks{The first author was supported by the FNS grant no.\ 20-137696/1}
\author{Bruno Colbois}
\address{Insitut de Math\'ematiques, Universit\'e de Neuch\^atel}
\email{bruno.colbois@unine.ch}
\begin{document}

\begin{abstract}
 Using the definition of a Finsler--Laplacian given by the first author, we show that two bi-Lipschitz Finsler metrics have a controlled spectrum. We deduce from that several generalizations of Riemannian results. In particular, we show that the spectrum on Finsler surfaces is controlled above by a constant depending on the topology of the surface and on the quasireversibility constant of the metric. In contrast to Riemannian geometry, we then give examples of highly non-reversible metrics on surfaces with arbitrarily large first eigenvalue.
\end{abstract}

\maketitle

\section{Introduction}

There has been several different generalizations of the Laplace--Beltrami operator to the Finslerian context \cite{BaoLackey,Shen:non-linear_Laplacian,Cen:mean-value_laplacian}. However their study seems in general fairly hard. For instance, to our knowledge, the only known result about eigenvalues is given by Munteanu \cite{Munteanu:eigen_estimates} in the case of Randers spaces. Following an idea of Patrick Foulon, the first author introduced in \cite{moi:natural_finsler_laplace} another generalization of the Laplace operator which seems more approachable and that we study in this article.

There is a very rich literature on the study of the spectrum of the Laplace--Beltrami operator, in particular on finding bounds on eigenvalues or constructing metrics with either large or small eigenvalues. First of all, it is immediate that when you scale the metric, the eigenvalues are multiplied by the inverse of the square of the scaling, and this fact stays true in the Finslerian context. Hence, when talking about large or small eigenvalues, the volume needs to be fixed in order to have a non-trivial question.

One efficient way to obtain coarse information about the Laplace--Beltrami spectrum is by comparing bi-Lipschitz metrics. Indeed, Dodziuk \cite{Dodziuk:eigen_on_forms} showed that a control of the ratio of two Riemannian metrics gives a control of their respective spectrum. Dodziuk proved it for the Hodge--Laplacian, but in the case of functions, the proof is quite straightforward: the energy associated with the Laplace--Beltrami operator does not depend on any derivatives of the metric, hence the ratio of the energy of two bi-Lipschitz equivalent metric is controlled and the Min-Max principle immediately gives a control of the ratio of the spectra. So two Riemannian drums that are roughly the same shape sounds roughly the same.

The main result of this article is that this still holds for Finslerian drums:
\begin{thmintro} \label{thmintro_spectrum_control}
Let $F$ and $F_0$ be two Finsler metrics on a compact $n$-manifold $M$.
Suppose that there exists $C > 1$ such that, for any $(x,v) \in TM$,
\begin{equation*}
 C^{-1} \leq \frac{F(x,v)}{F_{0}(x,v)} \leq C.
\end{equation*}
 Let $C_1$ and $C_2$ be the quasireversibility constant of $F$ and $F_0$ respectively. Then, there exists a constant $K \geq 1$, depending on $C$, $C_1$, $C_2$ and $n$, such that, for any $k \in \N^{\ast}$,
\begin{equation*}
 C^{-K} \leq \frac{\lambda_k(M,F)}{\lambda_k(M,F_0)} \leq C^{K}.
\end{equation*}
\end{thmintro}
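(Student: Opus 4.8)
The plan is to reduce everything, through the Min-Max principle, to two pointwise comparisons of the data that enter the Rayleigh quotient: the Finsler--Laplace volume and the Dirichlet energy. Recall from \cite{moi:natural_finsler_laplace} that $\Delta^F$ is self-adjoint with respect to a canonical volume $\mu_F$ on $M$, that it has no zeroth-order term, and that its energy can therefore be written as
\begin{equation*}
 \mathcal{E}_F(u) \;=\; \int_M \sigma_F\bigl(x, du_x\bigr)\, d\mu_F(x),
\end{equation*}
where $\sigma_F(x,\cdot)$ is the positive-definite quadratic form on $T_x^*M$ obtained by averaging $\xi\mapsto\langle\xi,v\rangle^2$ over the $F$-unit sphere $S_xF=\{v\in T_xM: F(x,v)=1\}$ against a natural fibre measure. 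The crucial structural point, which must be extracted from \cite{moi:natural_finsler_laplace}, is that both $\mu_F$ and $\sigma_F$ depend on $F$ only through the family of convex bodies $B_xF=\{F(x,\cdot)\le 1\}\subset T_xM$: although the construction passes through the geodesic flow and the contact form on the homogeneous bundle $HM$, the first derivatives of $F$ only feed the lower-order part of $\Delta^F$, which is invisible to the energy. Consequently $\lambda_k(M,F)=\inf_{\dim V=k+1}\ \sup_{0\neq u\in V}\ \mathcal{E}_F(u)/\int_M u^2\,d\mu_F$, the infimum over $(k+1)$-dimensional subspaces of $W^{1,2}(M)$, and it suffices to produce $a=a(C,C_1,C_2,n)\ge 1$ with
\begin{equation*}
 a^{-1}\le \frac{d\mu_F}{d\mu_{F_0}} \le a,
 \qquad
 a^{-1}\,\sigma_{F_0}(x,\cdot)\le \sigma_F(x,\cdot)\le a\,\sigma_{F_0}(x,\cdot);
\end{equation*}
then each Rayleigh quotient for $F$ is pinched between $a^{-3}$ and $a^{3}$ times the one for $F_0$, and Min-Max gives the theorem with, say, $C^{K}=\max(a^{3},C)$, which also guarantees $K\ge 1$.

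To establish the two comparisons, note first that the hypothesis says exactly $C^{-1}B_xF_0\subseteq B_xF\subseteq C\,B_xF_0$, a sandwich of convex bodies by dilations of ratio $C$, and that dually $F^*(x,\xi)=\sup_{v\neq 0}\langle\xi,v\rangle/F(x,v)$ obeys the same bi-Lipschitz bound with the same constant $C$, so that $C^{-1}B_x^*F_0\subseteq B_x^*F\subseteq C\,B_x^*F_0$ as well. The volume density $d\mu_F/d\mu_{F_0}$ is, in any local trivialisation, a ratio of quantities built monotonically and homogeneously from these nested bodies, hence lies in $[C^{-n},C^{n}]$ up to the universal normalisation constants in the definition of $\mu_F$ (and up to factors controlled by $C_1,C_2$ if that definition involves a centring of $B_xF$). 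For the quadratic form, transport the defining integral from $S_xF_0$ to $S_xF$ by the radial diffeomorphism $v\mapsto (F_0(x,v)/F(x,v))\,v$: the integrand $\langle\xi,v\rangle^2$ picks up a factor $(F_0/F)^2\in[C^{-2},C^2]$, and the push-forward of the natural fibre measure of $F_0$ is comparable to that of $F$ by a factor depending only on $n$ and on the asymmetry of the two unit balls, hence on $C_1$ and $C_2$. Multiplying the factors yields the required $a$.

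The main obstacle is exactly this last step. One must first know, from \cite{moi:natural_finsler_laplace}, the precise shape of $\mu_F$ and of the symbol and verify that they really are ``pointwise in $F$'' in the sense above — a genuine input, not a formal manipulation — and then one must carry out the convex-geometric comparison of the averaged quadratic forms with a constant that does not degenerate as either metric moves far from reversibility. It is this last requirement, the non-symmetry of Finsler unit balls being the only thing standing between us and an affinely natural comparison, that forces the quasireversibility constants $C_1$ and $C_2$ into $K$; everything else contributes only powers of $C$ with exponents depending on $n$.
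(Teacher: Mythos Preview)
Your overall architecture is correct and matches the paper's: reduce to Min--Max, then control separately the volume density and the principal symbol $\sigma_F$. The volume comparison is indeed straightforward (this is Lemma~\ref{lem_control_of_m_mu}). The gap is in your symbol comparison.

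You propose to transport the integral defining $\sigma_F(x,p)$ from $S_xF_0$ to $S_xF$ by the \emph{radial} map $v\mapsto (F_0/F)\,v$ and assert that the push-forward of the fibre measure $\alpha^{F_0}$ is comparable to $\alpha^F$ with a constant depending only on $n,C_1,C_2$. This is the step that fails. On $H_xM$ the radial map is the identity, so your assertion amounts to $\alpha^F\asymp\alpha^{F_0}$ pointwise on $H_xM$. But Lemma~\ref{lem_alpha_and_omega} gives
\[
\alpha^F=\bigl(\ell_{F_0}^{-1}\circ\ell_F\bigr)^{\!*}\Bigl[\mu^{-1}\bigl(F_0^*/F^*\bigr)^{\!n}\alpha^{F_0}\Bigr],
\]
so the two angle forms differ by the Jacobian of the Legendre composition $\ell_{F_0}^{-1}\circ\ell_F$, and that Jacobian involves the curvature of the unit spheres, i.e.\ second derivatives of $F$ and $F_0$ in the fibre variables. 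It is \emph{not} controlled by the bi-Lipschitz constant $C$; a long thin convex body $C$-close to a ball can still have wildly varying Gauss map. So the measures are not pointwise comparable, and the radial change of variables does not close the argument.

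The paper avoids this obstacle in two moves you are missing. First (Theorem~\ref{thm_uniquely_contact} and the proof of Proposition~\ref{prop_symbol_control}), it passes to $H^*_xM$, where the Liouville picture makes the \emph{measures} $\beta^F$ and $\beta^{F_0}$ genuinely comparable by the factor $(F_0^*/F^*)^n$; the price is that the integrand becomes $\bigl(L_{X_0}\pi^*\phi\circ\ell_F^{-1}\circ\ell_{F_0}\bigr)^2$, and one must show this cannot be uniformly small. That is the content of the convex-geometric Claim inside Proposition~\ref{prop_symbol_control}: a tangent-hyperplane argument showing that $\ell_F^{-1}\circ\ell_{F_0}$ cannot send a definite cap of $H_xM$ into the region where $p$ nearly vanishes, with an explicit $\varepsilon$ depending only on $C$. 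Second, Proposition~\ref{prop_symbol_control} compares $F$ only to a \emph{Riemannian} $g_0$; to get from $F$ to $F_0$ the paper routes through the Binet--Legendre metrics $g_F,g_{F_0}$ (Theorem~\ref{thm_Binet_Legendre}), whose bi-Lipschitz constants to $F,F_0$ depend only on $n,C_1,C_2$. This detour is precisely where $C_1,C_2$ enter the final constant, not through any ``centring'' of the unit ball as you suggest.

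In short: your outline is right, but the sentence ``the push-forward of the natural fibre measure of $F_0$ is comparable to that of $F$'' hides the entire difficulty, and the mechanism you propose for it is not the one that works.
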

Note that contrarily to the Riemannian case, this result is not trivial as derivatives of the Finsler metric does appear in the energy associated with the Finsler--Laplacian (See Section \ref{subsec_energy_and_spectrum} for the definition of the energy). We do however manage to prove that the Energy of two bi-Lipschitz metrics are nevertheless controlled and conclude again via the Min-Max principle.

We do not ask for our Finsler metrics to be reversible, i.e., to be such that the norm of a vector equals the norm of its opposite. So, saying that $F$ has a quasireversibility constant of $C_1$ means that the ratio of the Finsler norms of a vector and its opposite is controlled by $C_1^{-1}$ and $C_1$.

V.\ Matveev and M.\ Troyanov \cite{MatveevTroyanov} showed that to any Finsler metric $F$, we can associate a smooth Riemannian metric, called the Binet--Legendre metric, which is bi-Lipschitz equivalent to $F$ and such that the Lipschitz constant depends only on the dimension of the manifold and on the quasi-reversibility constant of $F$.

Thanks to the Binet--Legendre metric and Theorem \ref{thmintro_spectrum_control}, we can extend a lot of the Riemannian results to the Finslerian context. We did not try to give an exhaustive list of such results, as it should be clear to the reader that any coarse control of the spectrum of a class of Riemannian metric will yield a coarse control of the Finslerian spectrum. However, we do give the following applications, which were famous Riemannian problems:
\begin{thmintro} \label{thmintro_control_on_surfaces}
  Let $\Sigma$ be a surface of genus $\delta$. Let $F$ be a $C_1$-quasireversible Finsler metric on $\Sigma$. There exists a constant $K\geq 1$ depending only on $C_1$ such that, for any $k\in \N$,
\begin{equation*}
 \lambda_k(\Sigma,F) \left(\vol(\Sigma,F) \right) \leq (2C_1)^K (1+\delta) k.
\end{equation*}
\end{thmintro}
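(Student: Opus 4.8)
The plan is to reduce Theorem~\ref{thmintro_control_on_surfaces} to its Riemannian counterpart by passing through the Binet--Legendre metric and then invoking Theorem~\ref{thmintro_spectrum_control}. First I would recall the theorem of Matveev--Troyanov: associated to the $C_1$-quasireversible Finsler metric $F$ on $\Sigma$ there is a smooth Riemannian metric $g_{\mathrm{BL}}$, the Binet--Legendre metric, which is bi-Lipschitz to $F$ with a Lipschitz constant $C$ depending only on the quasireversibility constant $C_1$ and on $n = 2$; that is, $C = C(C_1)$. I also need to record how the volumes compare: since $g_{\mathrm{BL}}$ and $F$ are $C$-bi-Lipschitz (in the sense of the norms on $TM$), the respective Finsler/Riemannian volumes satisfy $C^{-n}\,\vol(\Sigma, g_{\mathrm{BL}}) \leq \vol(\Sigma, F) \leq C^{n}\,\vol(\Sigma, g_{\mathrm{BL}})$, again with $n=2$, so $\vol(\Sigma,F)$ and $\vol(\Sigma,g_{\mathrm{BL}})$ agree up to a factor $C^{2}$, a constant depending only on $C_1$.

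Next I would apply Theorem~\ref{thmintro_spectrum_control} to the pair $(F, g_{\mathrm{BL}})$. A Riemannian metric is reversible, so its quasireversibility constant is $1$; hence the constant $K$ produced by Theorem~\ref{thmintro_spectrum_control} depends only on $C$, on $C_1$ (the quasireversibility constant of $F$) and on $n=2$ — thus only on $C_1$. This gives
\begin{equation*}
 \lambda_k(\Sigma, F) \leq C^{K}\, \lambda_k(\Sigma, g_{\mathrm{BL}}).
\end{equation*}
Then I would invoke the known Riemannian bound for surfaces: by the result of Korevaar (or, with the precise shape $(1+\delta)k$, the Hassannezhad or Yang--Yau type estimate) there is a universal constant $A$ such that for every smooth Riemannian metric $g$ on a closed surface of genus $\delta$,
\begin{equation*}
 \lambda_k(\Sigma, g)\,\vol(\Sigma, g) \leq A\,(1+\delta)\,k.
\end{equation*}
Combining the three displayed inequalities yields $\lambda_k(\Sigma,F)\,\vol(\Sigma,F) \leq A\, C^{K}\, C^{2}\,(1+\delta)\,k$, and since $C$, $K$ depend only on $C_1$, the whole prefactor $A\,C^{K+2}$ can be absorbed into a constant of the form $(2C_1)^{K'}$ for a suitable $K'$ depending only on $C_1$ (enlarging $K$ if necessary; the factor $2$ and the base $C_1$ are a convenient normalization since $C$ itself is controlled by a power of $2C_1$ in the Matveev--Troyanov estimate).

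The main obstacle is essentially bookkeeping rather than a genuine difficulty: one must make sure every constant that enters — the Lipschitz constant $C$ from Binet--Legendre, the exponent $K$ from Theorem~\ref{thmintro_spectrum_control}, the volume distortion, and the universal Riemannian constant $A$ — genuinely depends only on $C_1$ (and the fixed dimension $2$), and then repackage the product of these into the claimed closed form $(2C_1)^{K}$. The one point that needs a little care is the relationship between the ``ratio of norms'' bound $C^{-1}\le F/F_0\le C$ used as hypothesis in Theorem~\ref{thmintro_spectrum_control} and the bi-Lipschitz statement of Matveev--Troyanov, but these are the same notion, so the chaining is immediate. I would also remark that the same argument works verbatim for any class of Riemannian metrics on which a universal eigenvalue bound is known, which is the source of the ``it should be clear to the reader'' comment in the introduction.
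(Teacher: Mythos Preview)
Your proposal is correct and follows essentially the same route as the paper: pass to the Binet--Legendre metric $g_F$, apply Korevaar's Riemannian bound $\lambda_k(g_F)\vol(\Sigma,g_F)\le c(1+\delta)k$, and then transfer back to $F$ using Corollary~\ref{cor_control_spectrum} (i.e.\ Theorem~\ref{thmintro_spectrum_control}) together with the volume comparison from Lemma~\ref{lem_control_of_m_mu}. The paper's write-up is just a terse version of exactly this chaining, with the specific exponents $(2C_1)^K$ and $(2C_1)^6$ coming from the explicit Binet--Legendre Lipschitz constant $(C_1\sqrt{2n})^{n+1}=(2C_1)^3$ in dimension $n=2$.
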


\begin{thmintro} \label{thmintro_conformal_control}
Let $M$ be a compact $n$-manifold and $F$ a Finsler metric on $M$. There exists a constant $C_n\left([F] \right)$, depending only on the dimension $n$ and the conformal class of $F$, such that, for any $k\in \N$, 
\begin{equation*}
 \lambda_k(M,F) \left(\vol\left(M,F \right) \right)^{2/n} \leq C_n\left( [F]\right) k^{2/n}.
\end{equation*}
\end{thmintro}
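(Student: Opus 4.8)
The plan is to reduce the statement to the corresponding Riemannian theorem by passing to the Binet--Legendre metric and then invoking Theorem~\ref{thmintro_spectrum_control}. The first observation is that the quasireversibility constant is a conformal invariant: if $F' = e^{\varphi} F$ with $\varphi \in C^{\infty}(M)$, then at each $x$ the norms of $v$ and of $-v$ are both rescaled by $e^{\varphi(x)}$, so their ratio is unchanged and $F'$ has the same quasireversibility constant as $F$. Write $C_1 = C_1([F])$ for this common value.

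Let $g_F$ be the Binet--Legendre metric of $F$. By the theorem of Matveev--Troyanov recalled above, $g_F$ is a smooth Riemannian metric bi-Lipschitz to $F$ with a constant $C = C(n, C_1)$, hence with a constant depending only on $n$ and $[F]$. Two further properties of $g_F$ are used. First, the Binet--Legendre construction is conformally equivariant: the $e^{\varphi}F$-unit ball at $x$ is $e^{-\varphi(x)}$ times the $F$-unit ball, and feeding this into the averaging procedure defining $g_F$ gives $g_{e^{\varphi}F} = e^{2\varphi} g_F$; in particular the Riemannian conformal class $[g_F]$ depends only on $[F]$ (and $n$). Second, being Riemannian, $g_F$ is reversible, so its quasireversibility constant equals $1$, and its Finsler--Laplace spectrum and Finsler volume coincide with the Laplace--Beltrami spectrum and Riemannian volume of $g_F$ (see Section~\ref{subsec_energy_and_spectrum}).

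Now apply Theorem~\ref{thmintro_spectrum_control} to the pair $(F, g_F)$, the latter regarded as a Finsler metric via $v \mapsto \sqrt{g_F(v,v)}$: with $K = K(C, C_1, 1, n)$, which depends only on $n$ and $[F]$, one gets $\lambda_k(M, F) \le C^{K}\lambda_k(M, g_F)$ for every $k$. Moreover, bi-Lipschitz equivalence with constant $C$ forces the respective unit balls to be nested up to the factor $C$, so $\vol(M, F) \le C^{n}\vol(M, g_F)$. Finally, combine this with the Riemannian conformal upper bound for eigenvalues of Korevaar (for $n = 2$ one may use Yang--Yau or Hersch instead): there is $C_n([g_F])$, depending only on $n$ and the conformal class $[g_F]$, with $\lambda_k(M, g_F)\,\vol(M, g_F)^{2/n} \le C_n([g_F])\,k^{2/n}$. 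Putting the three estimates together,
\begin{equation*}
\lambda_k(M, F)\,\vol(M, F)^{2/n} \;\le\; C^{K}\, C^{2}\, C_n([g_F])\, k^{2/n},
\end{equation*}
and every constant on the right depends only on $n$ and $[F]$; the case $k = 0$ is trivial. This gives the claim with $C_n([F]) := C^{K+2}\, C_n([g_F])$.

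The step that will need the most care is the bookkeeping of dependencies rather than any single estimate: one must check that passing to $g_F$ loses nothing beyond $n$ and the conformal class, i.e.\ that it is $[g_F]$, not merely the bi-Lipschitz class of $g_F$, that is determined by $[F]$, which is precisely the conformal equivariance of the Binet--Legendre construction, and one should confirm that the Riemannian conformal eigenvalue bound being quoted has a constant depending only on the dimension and the conformal class. The remaining ingredients, namely the volume comparison and the identification of the Finsler--Laplacian of $g_F$ with its Laplace--Beltrami operator, are routine or already established.
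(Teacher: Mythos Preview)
Your proof is correct and follows essentially the same route as the paper: pass to the Binet--Legendre metric $g_F$, apply Korevaar's Riemannian conformal bound to $g_F$, and use the bi-Lipschitz spectrum comparison (Theorem~\ref{thmintro_spectrum_control} / Corollary~\ref{cor_control_spectrum}) together with the volume comparison to transfer the bound to $F$. You are in fact more explicit than the paper about the two points it handles tersely---that the quasireversibility constant is a conformal invariant and that the Binet--Legendre construction is conformally equivariant (the paper simply cites \cite{MatveevTroyanov} for the latter)---so the dependency bookkeeping you flag as needing care is already well handled.
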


\begin{thmintro} \label{thmintro_covering_tower}
 Let $M$ be a compact $n$-manifold and $F$ a Finsler metric on $M$. Let $\{M_i\}_{i\geq 1}$ be a family of finite-sheeted covering spaces of $M$ with their induced Finsler metric. Let $\Gamma_i$ be the Schreier graph of the subgroup $\pi_1(M_i)$ of $\pi_1(M)$. Then, there exists a constant $C \geq 1$ depending on $n$ and $F$ such that, for all $k < \left|\Gamma_i \right| $
\begin{equation*}
 C^{-1} \lambda_k \left( \Gamma_i \right)  \leq \lambda_k\left(M_i \right) \leq C \lambda_k \left( \Gamma_i \right).
\end{equation*}
In particular, for all $k$
\begin{equation*}
 \lambda_k\left(M_i \right) \rightarrow 0 \; \text{when } i\rightarrow +\infty \Leftrightarrow \lambda_k \left( \Gamma_i \right) \rightarrow 0 \; \text{when } i\rightarrow +\infty .
\end{equation*}
\end{thmintro}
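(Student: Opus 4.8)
The plan is to reduce the statement to a comparison between the Finsler metric $F$ on each cover $M_i$ and the Binet--Legendre Riemannian metric $g_i$ it induces, and then to invoke the known Riemannian version of the result (due to Brooks, and Mantuano in the more precise graph-theoretic form) for towers of coverings. First I would recall that by Matveev--Troyanov the Binet--Legendre construction is natural under local isometries, so the Binet--Legendre metric $g_i$ of $(M_i,F)$ is exactly the pullback of the Binet--Legendre metric $g$ of $(M,F)$; moreover $F$ and $g$ are bi-Lipschitz on $M$ with a constant $C$ depending only on $n$ and the quasireversibility constant of $F$, and this same constant $C$ works simultaneously on every cover since everything is pulled back. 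Applying Theorem~\ref{thmintro_spectrum_control} on each $M_i$ (the quasireversibility constants of $F$ and of the Riemannian $g$ are bounded in terms of those of $F$ on $M$, uniformly in $i$) gives a constant $K$, independent of $i$, with $C^{-K}\lambda_k(M_i,g_i)\leq \lambda_k(M_i,F)\leq C^{K}\lambda_k(M_i,g_i)$ for all $k$.

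Next I would bring in the Riemannian comparison with the Schreier graph $\Gamma_i$. Fix a finite triangulation (or a net) of $M$ adapted to $g$; its lift to $M_i$ is a triangulation whose $1$-skeleton is quasi-isometric to $\Gamma_i$ with constants independent of $i$, since $\Gamma_i$ is by definition the Schreier graph of $\pi_1(M_i)\leq\pi_1(M)$ for the generating set coming from the triangulation of $M$. The discretization results of Brooks and of Mantuano then provide a constant $C'\geq 1$, depending only on $(M,g)$ and this fixed combinatorial data — hence not on $i$ — such that for all $k<|\Gamma_i|$ one has $(C')^{-1}\lambda_k(\Gamma_i)\leq\lambda_k(M_i,g_i)\leq C'\lambda_k(\Gamma_i)$. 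Here I use that the local geometry of $(M_i,g_i)$ is uniformly bounded because $g_i$ is a pullback of the fixed metric $g$, which is what makes the discretization constants uniform over the tower.

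Composing the two two-sided estimates yields $C''{}^{-1}\lambda_k(\Gamma_i)\leq\lambda_k(M_i,F)\leq C''\lambda_k(\Gamma_i)$ with $C''=C^{K}C'$ depending only on $n$ and $F$, which is the first display of the theorem; the equivalence of $\lambda_k(M_i)\to 0$ and $\lambda_k(\Gamma_i)\to 0$ is then immediate. The main obstacle I anticipate is not any single estimate but making the uniformity precise: one must check carefully that the Binet--Legendre metric genuinely commutes with passing to covers (naturality of the Matveev--Troyanov construction), and that the hypotheses of the Brooks/Mantuano discretization theorem are met by a family of coverings with a common fixed base metric, so that the discretization constant can be chosen once and for all. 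A secondary technical point is to confirm that the ``Schreier graph'' appearing in Mantuano's theorem, built from a triangulation or a suitable net, is the same (up to quasi-isometry with uniform constants) as the Schreier graph in the statement, so that the two $\lambda_k(\Gamma_i)$ really coincide spectrally up to a uniform factor.
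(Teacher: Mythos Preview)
Your proposal is correct and follows essentially the same route as the paper: pass to the Binet--Legendre metric, use the bi-Lipschitz spectrum comparison (Theorem~\ref{thmintro_spectrum_control}) uniformly on the covers, and then invoke Mantuano's discretization theorem for the Riemannian tower. The paper's own proof is in fact a single sentence (``apply Mantuano's Theorem~I.4.1 to the Binet--Legendre metric associated to $F$''), leaving implicit exactly the uniformity and naturality issues you flag; your more careful accounting of these points is appropriate and does not diverge from the paper's strategy.
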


The Theorem \ref{thmintro_control_on_surfaces} leads to a very natural question: can we get rid of the dependency on the quasireversibility constant in the bound for the eigenvalues? Or in other words, can we prove that the first eigenvalue is bounded for \emph{any} Finsler metric on a surface? We show that this is never the case:
\begin{thmintro}\label{thmintro_big_eigenvalues}
 For any surface $\Sigma$ and any $M>0$, there exists a Randers metric $F = \sqrt{g} + \rho$ such that 
\begin{equation*}
 \lambda_1(F) \vol\left(\Sigma, F \right) \geq M.
\end{equation*}
\end{thmintro}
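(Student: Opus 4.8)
The plan is to construct on an arbitrary surface $\Sigma$ a family of Randers metrics $F_b=\sqrt g+b\rho_0$, $b\in(0,1)$, whose quasireversibility degenerates as $b\to1$, and to show $\lambda_1(\Sigma,F_b)\vol(\Sigma,F_b)\to+\infty$. I would choose the data as follows: fix an essential simple closed curve on $\Sigma$, a long flat cylindrical collar $C=[-L-1,L+1]\times S^1$ of it with $g=dt^2+d\theta^2$ on $C$ (and $g$ arbitrary elsewhere), and set $\rho_0=\chi(t)\,d\theta$ with $\chi$ smooth, $0\le\chi\le1$, $\chi\equiv1$ on $[-L,L]$ and $\chi$ supported in $(-L-1,L+1)$, so that $\rho_0$ extends smoothly by $0$. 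Then $|b\rho_0|_g\le b<1$ everywhere, so $F_b$ is a genuine Finsler metric, while $|b\rho_0|_g=b\to1$ on the whole core annulus $A_0=[-L,L]\times S^1$; and by taking $L$ large one can ensure $\vol(A_0,g)>\tfrac12\vol(\Sigma,g)$.

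Two facts drive the estimate. First, the Finsler volume of a Randers metric is insensitive to the $1$-form: the co-tangent unit ball of $\sqrt g+\rho$ is the translate by $\rho^{\#}$ of that of $\sqrt g$, hence has the same Lebesgue volume, and since $\vol_F$ is, up to normalization, the symplectic volume of the co-ball bundle (cf.\ the definition via the Hilbert form in \cite{moi:natural_finsler_laplace}), we get $\vol(\Sigma,F_b)=\vol(\Sigma,\sqrt g)$ for all $b$; this is precisely what lets the construction escape the obstruction of Theorem~\ref{thmintro_control_on_surfaces}. Second, the Finsler--Laplace energy of $F_b$ blows up on $A_0$: the energy (Section~\ref{subsec_energy_and_spectrum}) has the form $\mathcal E_{F_b}(u)=\int_\Sigma G_b(x)(du_x)\,d\vol_{F_b}(x)$, where the positive quadratic form $G_b(x)$ is, up to a bounded normalization, the average of $v\otimes v$ over the $F_b$-unit sphere; since that unit sphere at a point of $A_0$ is an ellipse whose smaller semi-axis diverges like $(1-b^2)^{-1/2}$ (and whose $\partial_\theta$-semi-axis diverges like $(1-b^2)^{-1}$) as $b\to1$, one gets $G_b\gtrsim(1-b^2)^{-1}g^{-1}$ on $A_0$, with the $\vol_{F_b}$-density on $A_0$ equal to that of $g$.

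The lower bound on $\lambda_1$ then follows from Cheeger's inequality for the operator $\Delta_{F_b}$ — which is self-adjoint for $\vol_{F_b}$ and of divergence form with principal symbol $G_b$, hence $\lambda_1(\Sigma,F_b)\ge\tfrac14 h(F_b)^2$ with $h(F_b)=\inf_A\big(\int_{\partial A}\sqrt{G_b(\nu)}\,d\vol_{F_b}\big)/\min(\vol_{F_b}(A),\vol_{F_b}(A^c))$. If $A$ is $\vol_{F_b}$-balanced then, $A_0$ carrying more than half the volume, both $A\cap A_0$ and $A^c\cap A_0$ have volume at least a fixed $\delta_0>0$, so $\partial A\cap A_0$ is a curve system dividing the flat cylinder $A_0$ into two pieces of area $\ge\delta_0$ and therefore has $g$-length at least a fixed $\ell_0>0$; along it $|b\rho_0|_g=b$, so $\sqrt{G_b(\nu)}\gtrsim(1-b^2)^{-1/2}$, giving $\int_{\partial A}\sqrt{G_b(\nu)}\,d\vol_{F_b}\gtrsim\ell_0(1-b^2)^{-1/2}$ while the denominator stays equal to the fixed $\tfrac12\vol(\Sigma,g)$. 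Thus $h(F_b)\to+\infty$, and $\lambda_1(\Sigma,F_b)\vol(\Sigma,F_b)\ge\tfrac14 h(F_b)^2\vol(\Sigma,\sqrt g)\to+\infty$, which proves the theorem for $b$ close enough to $1$.

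The main obstacle is the quantitative claim in the second fact: one must verify that the angular measure governing the Finsler--Laplace energy does not itself degenerate as $b\to1$ so as to cancel the growth of the $F_b$-unit balls, i.e.\ prove an honest asymptotic lower bound $G_b\gtrsim(1-b^2)^{-1}g^{-1}$ on $A_0$ directly from the explicit expression of $\mathcal E_{F_b}$ for Randers metrics. Granting that, the volume invariance for Randers metrics, the Cheeger inequality for $\Delta_F$, and the elementary fact that a $\vol_{F_b}$-balanced cut must cross $A_0$ at definite $g$-length are all routine.
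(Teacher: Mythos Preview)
Your main quantitative claim is false, and this breaks the whole argument. On the flat core $A_0$ with $\rho_0=b\,d\theta$, the explicit energy formula for Randers metrics (which the paper computes) gives, for $f\in H^1$,
\[
E^{F_b}_{A_0}(f)=\frac{2}{(1+\sqrt{1-b^2})\sqrt{1-b^2}}\int_{A_0}\Bigl(\frac{\partial f}{\partial\theta}\Bigr)^2
\;+\;\frac{2}{1+\sqrt{1-b^2}}\int_{A_0}\Bigl(\frac{\partial f}{\partial t}\Bigr)^2,
\]
so $\sigma^{\theta\theta}\to\infty$ but $\sigma^{tt}\to 2$ as $b\to1$. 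The symbol does \emph{not} satisfy $G_b\gtrsim(1-b^2)^{-1}g^{-1}$; it blows up only in the $\theta$-direction. Consequently a Cheeger cut $\{t=t_0\}\times S^1$, whose unit normal is $\partial_t$, has $\sqrt{G_b(\nu)}$ uniformly bounded, so $h(F_b)$ does not go to infinity. More directly, any test function depending on $t$ alone has $F_b$-energy bounded independently of $b$; on a cylinder of length $2L$ such a function (suitably extended and made mean-zero) has Rayleigh quotient of order $L^{-2}$, so $\lambda_1(\Sigma,F_b)\lesssim L^{-2}$ uniformly in $b$, while $\vol(\Sigma,F_b)=\vol(\Sigma,g)\sim L$. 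Thus $\lambda_1\cdot\vol$ stays bounded (in fact goes to $0$ with $L$), the opposite of what you want. A secondary issue: your construction needs an essential simple closed curve, so it does not even apply to $S^2$.

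The paper's construction differs in exactly the place where yours fails: the Randers $1$-form is taken along the \emph{longitudinal} direction of a long cylinder ($\rho=\eta\,dx$), so the symbol blows up precisely in the direction in which a long cylinder would otherwise supply low-energy test functions, and remains bounded in the short $\theta$-direction where the isoperimetric profile is already good. This kills the obvious functions of $x$, but one still has to control functions supported off the cylinder; the paper handles this by showing that a putative small eigenfunction would be nearly constant on the topological part of $\Sigma$ (a Sobolev/oscillation estimate on a fixed Riemannian piece), transplanting it to a Randers sphere with the same nose, and deriving a contradiction from a direct lower bound on $\lambda_1$ of that sphere. The anisotropy of the symbol is thus not an obstacle to be wished away but the mechanism that makes the construction work, once the $1$-form is oriented correctly.
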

 In our opinion, the interest of this result is three-fold. First it exhibits a behavior that is impossible for Riemannian metric on surfaces, i.e., large eigenvalues. Second, it adds one item to the list of surprise that we can get when considering non-reversible Finsler metrics instead of reversible ones. Finally, it shows that this Finsler--Laplacian can hear when a drum is non-reversible.

Note that for manifolds of dimension $3$ and greater, there always exists large eigenvalues already in the Riemannian setting \cite{ColboisDodziuk}.

\subsection*{Structure of this paper}
In Section \ref{sec_definitions}, we introduce the definitions and basic results that we will need. The main references for this section are \cite{moi:these,moi:natural_finsler_laplace}.

In Section \ref{sec_control_of_energy}, we prove that we can control the ratio of energy of two bi-Lipschitz equivalent metrics (Theorem \ref{thm_energy_control}) and deduce Theorem \ref{thmintro_spectrum_control} (Corollary \ref{cor_control_spectrum}). We then obtain the Theorems \ref{thmintro_control_on_surfaces}, \ref{thmintro_conformal_control} and \ref{thmintro_covering_tower} in Section \ref{sec_applications}.

Finally, in Section \ref{sec_big_eigenvalues}, we construct Randers surfaces with arbitrarily large first eigenvalue. We do it first on the torus, as the construction is straightforward, and then adapt the construction to any surfaces.

\section{Definitions} \label{sec_definitions}

In this section, we will give the definition of the Finsler--Laplacian we use as well as the main results we will need, the reader can consult \cite{moi:these,moi:natural_finsler_laplace} for proofs.

 First, let us state the definition of Finsler metric we will be using.
\begin{defin} \label{def:finsler_metric}
Let $M$ be a manifold. A (smooth) Finsler metric on $M$\ is a continuous function ${F \colon TM \rightarrow \R^+}$ that is:
\begin{enumerate}
  \item $C^{\infty}$\ except on the zero section,
  \item positively homogeneous, i.e., $F(x,\lambda v)=\lambda F(x,v)$\ for any $\lambda>0$,
  \item positive-definite, i.e., $F(x,v)\geq0$\ with equality iff $v=0$,
  \item strongly convex, i.e., $ \left(\dfrac{\partial^2 F^2}{\partial v_i \partial v_j}\right)_{i,j}$ is positive-definite.
 \end{enumerate}
\end{defin}
A Finsler metric is said to be \emph{reversible} if $F(x, -v ) = F(x,v)$\ for any $(x,v)\in TM$. We say that $C_1$ is the \emph{quasireversibility constant} of a Finsler metric $F$ if 
\begin{equation*}
 C_1 =  \sup \left\{ F(x,-v) \mid (x,v) \in TM, \; \text{such that } F(x,v) =1 \right\}.
\end{equation*}

The Finsler--Laplacian is defined using Foulon's formalism \cite{Fou:EquaDiff} that we quickly recall.
Let $HM$\ be the homogenized bundle, i.e., $HM := \left(TM \smallsetminus \{\text{zero section}\} \right) / \R^+$. Let $\pi \colon HM \rightarrow M$\ be the canonical projection and $VHM = \Ker d\pi \subset THM$ the vertical bundle.

The cornerstone of Foulon's formalism is the Hilbert form $A$ associated to the Finsler metric. The Hilbert form is a $1$-form on $HM$ defined, for $(x,\xi) \in HM$, $Z \in T_{(x,\xi)}HM$, and $v \in T_xM$ such that $r(x,v) = (x,\xi)$, where $r \colon TM\smallsetminus \{\text{zero section}\}  \rightarrow HM$, by
\begin{equation}
 A_{(x,\xi)}(Z) := \lim_{\eps \rightarrow 0} \frac{F\left(x, v + \eps d\pi(Z) \right) - F\left( x,v \right)}{\eps}.
\end{equation}
The Hilbert form contains all the necessary information about the dynamics of the Finsler metric:

\begin{thm} [Hilbert]
 The form $A$ is a contact form, i.e., if $n$ is the dimension of $M$, $\ada$ is a volume form on $HM$.
 Let $X \colon HM \rightarrow THM$ be the Reeb field of $A$, i.e., the only solution of 
\begin{equation}
\label{eq:Reeb_field}
\left\{ 
\begin{aligned}
  A(X) &= 1 \\
 i_X dA &= 0  \, .
 \end{aligned}
\right.
\end{equation}
The vector field $X$ generates the geodesic flow for $F$.
\end{thm}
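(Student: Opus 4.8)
The plan is to establish Hilbert's theorem in two parts: first that $A$ is a contact form, and second that its Reeb field generates the geodesic flow. The whole argument is essentially local, so I would work on $HM$ restricted to a coordinate chart $U \subset M$, using natural coordinates $(x^i)$ on $U$ and the induced coordinates $(x^i, v^i)$ on $TM$; on $HM$ one may normalize, say by $F(x,v)=1$, to get a concrete slice, or simply work with the homogeneity-zero functions. The key computational fact is that, by Euler's relation for the positively homogeneous function $F$, the pullback of $A$ to $TM \smallsetminus 0$ is $r^*A = \sum_i \frac{\partial F}{\partial v^i}\, dx^i$, i.e.\ $A$ is (the descent of) the vertical derivative of $F$. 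This is exactly the Hilbert form in the classical sense, and the definition given in the excerpt via the directional derivative of $F$ is just the coordinate-free way of saying $A(Z) = \sum_i \frac{\partial F}{\partial v^i} d\pi(Z)^i$.

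First I would check that $A$ is well-defined on $HM$: the coefficients $\partial F/\partial v^i$ are homogeneous of degree $0$ in $v$, hence constant along the $\R^+$-orbits, and $A(Z)$ depends on $Z$ only through $d\pi(Z)$, so it descends to a genuine $1$-form on $HM$. Next, the contact condition: I would compute $dA$ in coordinates and show $A \wedge (dA)^{n-1}$ is nowhere zero. Here is where strong convexity enters — the vertical part of $dA$ is governed by the matrix $\big(\partial^2 F/\partial v^i \partial v^j\big)$, equivalently (on the level set $F=1$) by the fundamental tensor $g_{ij} = \tfrac12 \partial^2 F^2/\partial v^i\partial v^j$, which is positive-definite by hypothesis (4). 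A clean way to organize this is to pull everything back to $TM\smallsetminus 0$ via $r$ and use the homogeneity to split $dx^i$ and the "radial" direction off; one finds $r^*\!\big(A\wedge(dA)^{n-1}\big)$ is, up to a positive factor and a radial contraction, the determinant of the fundamental tensor times the standard volume, hence nonzero. I would cite \cite{moi:these,moi:natural_finsler_laplace} (or a standard Finsler reference) for the bookkeeping rather than reproduce it.

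For the second claim, the Reeb field $X$ is characterized by $A(X)=1$ and $i_X dA = 0$; uniqueness is automatic once $A$ is contact. So it suffices to exhibit a vector field satisfying these two equations and recognize it as the geodesic spray. The natural candidate is the (projectivized) geodesic spray $\mathbf{G}$, whose integral curves project to unit-speed geodesics of $F$. Using $r^*A = \partial_{v^i}F\, dx^i$ and the standard expression for the spray $\mathbf{G} = v^i \partial_{x^i} - 2 G^i \partial_{v^i}$ with the geodesic coefficients $G^i$, one verifies $A(\mathbf{G}) = v^i \partial_{v^i} F = F = 1$ on the unit sphere bundle (Euler again), and $i_{\mathbf{G}} dA = 0$ amounts precisely to the Euler--Lagrange equations for $F$-geodesics — this is the classical statement that geodesics are the characteristics of the Hilbert form. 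Hence $\mathbf{G}$ descends to the Reeb field $X$ on $HM$, and its flow is by definition the geodesic flow.

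The main obstacle is the contact-form verification, i.e.\ showing $A\wedge(dA)^{n-1}\neq 0$ and tracking how the positive-definiteness of $\partial^2 F^2/\partial v^i\partial v^j$ propagates to the positive-definiteness of $dA$ restricted to a complement of $X$. The subtlety is purely the homogeneity bookkeeping: $F$ is only $1$-homogeneous (so its Hessian in $v$ is $-1$-homogeneous and degenerate in the radial direction), and one must carefully separate the radial direction — on which the vertical Hessian of $F$ vanishes — from the rest, where it is definite, and check that this residual direction is precisely spanned by $X$, so that $dA$ is nondegenerate on the quotient $THM/\langle X\rangle \cong VHM \oplus (\text{horizontal})$. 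Everything else (well-definedness, the Euler-relation computations, recognizing the Euler--Lagrange equations) is routine and I would present it tersely with references to \cite{Fou:EquaDiff,moi:these,moi:natural_finsler_laplace}.
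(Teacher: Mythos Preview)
The paper does not prove this statement at all: it is quoted as a classical result attributed to Hilbert, and the reader is referred to \cite{moi:these,moi:natural_finsler_laplace} (and implicitly to Foulon's formalism \cite{Fou:EquaDiff}) for details. So there is no ``paper's own proof'' to compare your proposal against.

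That said, your sketch is the standard argument and is essentially what one finds in those references: identify $r^{\ast}A$ with the vertical derivative $d_vF$, use strong convexity of $F$ (equivalently, positive-definiteness of the fundamental tensor $g_{ij}$) to get nondegeneracy of $dA$ on a complement of the radial direction, and then recognize the Reeb equations $A(X)=1$, $i_X dA=0$ as the Euler--Lagrange equations for the spray. The one point worth tightening is the homogeneity bookkeeping you flag yourself: the vertical Hessian of $F$ (as opposed to $F^2$) is degenerate along the radial direction, and you need to be precise that this radial direction is exactly what gets quotiented out when passing from $TM\smallsetminus 0$ to $HM$, so that $A\wedge dA^{n-1}$ is genuinely a volume form on the $(2n-1)$-dimensional $HM$. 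Once that is said cleanly, the rest is routine.
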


We can now define the Finsler--Laplacian. First we split the canonical volume $\ada$ into a volume form on the manifold $M$ and an angle form:
\begin{prop}
\label{prop:construction}
 There exists a unique volume form $\Omega^F$\ on $M$\ and an $(n-1)$-form $\alpha^F$\ on $HM$, never zero on $VHM$, such that
\begin{equation}
\label{eq:alpha_wedge_omega}
  \alpha^{F} \wedge \pi^{\ast}\Omega^F =  A\wedge dA^{n-1}, 
\end{equation}
and, for all $x\in M$, 
\begin{equation}
\label{eq:longueur_fibre}
 \int_{H_xM} \alpha^F =  \voleucl(\S^{n-1})\, .
\end{equation}
\end{prop}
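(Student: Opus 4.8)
The plan is to obtain $\Omega^F$ by \emph{integration over the fibres} of the volume form $\mu := A\wedge dA^{n-1}$ (which is a volume form on $HM$ by Hilbert's theorem) along the sphere bundle $\pi\colon HM\to M$, whose fibres $H_xM$ are $(n-1)$-dimensional, and then to recover $\alpha^F$ by ``dividing'' $\mu$ by $\pi^{\ast}\Omega^F$. Concretely, for $x\in M$ and $u_1,\dots,u_n\in T_xM$, I choose for each $p\in H_xM$ a lift $\tilde u_i(p)\in T_pHM$ with $d\pi(\tilde u_i(p))=u_i$ and set
\[
\Omega^F_x(u_1,\dots,u_n):=\frac{1}{\voleucl(\S^{n-1})}\int_{H_xM}\iota_x^{\ast}\bigl(i_{\tilde u_n}\cdots i_{\tilde u_1}\mu\bigr),
\]
where $\iota_x\colon H_xM\hookrightarrow HM$ is the inclusion and the fibre carries its standard orientation (if $M$ is non-orientable one reads $\Omega^F$ as a density and orients the fibres locally). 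The crucial observation is that this does not depend on the chosen lifts: changing $\tilde u_i$ by a vertical vector $w$ alters the integrand by a form whose value on a frame $v_1,\dots,v_{n-1}$ of $V_pHM=T_pH_xM$ is, up to sign, $\mu(\tilde u_1,\dots,w,\dots,\tilde u_n,v_1,\dots,v_{n-1})$, and this vanishes because $w,v_1,\dots,v_{n-1}$ are $n$ vertical vectors inside the $(n-1)$-dimensional space $V_pHM$. The right-hand side is plainly alternating and multilinear in $u_1,\dots,u_n$ and smooth in $x$, so it defines a smooth $n$-form on $M$; it is nowhere zero because for a basis $u_1,\dots,u_n$ and continuous choices of lifts and of a positively oriented fibre frame, $\mu(\tilde u_1,\dots,\tilde u_n,v_1,\dots,v_{n-1})$ is a continuous nowhere-vanishing function on the connected fibre $H_xM$, hence of constant sign.

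Next I would produce $\alpha^F$. In a local bundle chart with coordinates $(x^i,\xi^a)$ one has $\mu=g\,dx^1\wedge\cdots\wedge dx^n\wedge d\xi^1\wedge\cdots\wedge d\xi^{n-1}$ and $\pi^{\ast}\Omega^F=\phi\,dx^1\wedge\cdots\wedge dx^n$ with $g,\phi$ nowhere zero, so $(g/\phi)\,d\xi^1\wedge\cdots\wedge d\xi^{n-1}$ solves $\alpha\wedge\pi^{\ast}\Omega^F=\mu$ there; since this equation is linear in $\alpha$, patching the local solutions with a partition of unity subordinate to a trivializing cover yields a global $(n-1)$-form $\alpha^F$ with $\alpha^F\wedge\pi^{\ast}\Omega^F=\mu$. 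Evaluating this identity on $\tilde u_1,\dots,\tilde u_n,v_1,\dots,v_{n-1}$ and using that $\pi^{\ast}\Omega^F$ annihilates vertical vectors, only one term of the wedge survives, giving
\[
\mu(\tilde u_1,\dots,\tilde u_n,v_1,\dots,v_{n-1})=\alpha^F(v_1,\dots,v_{n-1})\,\Omega^F_x(u_1,\dots,u_n).
\]
Thus $\alpha^F|_{H_xM}$ equals $\iota_x^{\ast}(i_{\tilde u_n}\cdots i_{\tilde u_1}\mu)$ divided by the nonzero number $\Omega^F_x(u_1,\dots,u_n)$; in particular $\alpha^F$ restricts to a nonzero volume form on each fibre, so it is never zero on $VHM$, and integrating over $H_xM$ gives, by the very definition of $\Omega^F$, $\int_{H_xM}\alpha^F=\voleucl(\S^{n-1})$.

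For uniqueness of $\Omega^F$, suppose $(\Omega^F,\alpha^F)$ and $(\widetilde\Omega^F,\widetilde\alpha^F)$ both satisfy the conclusion and write $\widetilde\Omega^F=c\,\Omega^F$ for a positive function $c$ on $M$. From $\alpha^F\wedge\pi^{\ast}\Omega^F=\mu=(c\circ\pi)\,\widetilde\alpha^F\wedge\pi^{\ast}\Omega^F$ and the fact that any $(n-1)$-form $\beta$ with $\beta\wedge\pi^{\ast}\Omega^F=0$ vanishes on $VHM$ (same evaluation as above, using $\Omega^F\neq0$), one gets $\alpha^F|_{VHM}=(c\circ\pi)\,\widetilde\alpha^F|_{VHM}$; integrating over $H_xM$, where $c\circ\pi\equiv c(x)$, yields $\voleucl(\S^{n-1})=c(x)\,\voleucl(\S^{n-1})$, hence $c\equiv1$ and $\Omega^F=\widetilde\Omega^F$. (The form $\alpha^F$ is pinned down only along $VHM$; two choices differ by an $(n-1)$-form wedging to zero with $\pi^{\ast}\Omega^F$, which is immaterial for the Finsler--Laplacian, so I expect the statement to assert uniqueness for $\Omega^F$ and existence for $\alpha^F$.)

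The only genuinely delicate points are the well-definedness and non-vanishing of the fibre integral defining $\Omega^F$ — both resting on the elementary remark that $n$ vertical vectors in an $(n-1)$-dimensional fibre are linearly dependent — and the bookkeeping ensuring that one uses the same (standard) orientation of the fibres $H_xM$ throughout, so that the normalization comes out as $+\voleucl(\S^{n-1})$. The globalization of $\alpha^F$ and the uniqueness argument are routine once these are in place.
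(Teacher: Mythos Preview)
The paper does not actually prove this proposition: Section~\ref{sec_definitions} explicitly refers the reader to \cite{moi:these,moi:natural_finsler_laplace} for the proofs of the foundational results, and Proposition~\ref{prop:construction} is stated there as a black box. So there is no in-text argument to compare your proposal against.

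That said, your argument is correct and is precisely the standard fibre-integration construction one expects. The key points --- independence of the lifts (because $n$ vertical vectors in an $(n-1)$-dimensional fibre are dependent), non-vanishing of the fibre integral (because the lifts together with a vertical frame give a basis of $T_pHM$, on which the volume form $\mu$ is nonzero), and uniqueness of $\Omega^F$ via the normalization --- are all handled properly. Your closing parenthetical is also right: only $\Omega^F$ is uniquely determined, while $\alpha^F$ is pinned down only along $VHM$; the statement's ``unique'' should indeed be read as applying to $\Omega^F$. One small remark: when you patch the local $\alpha$'s, make sure the partition of unity is pulled back from $M$ (i.e., use $\rho_i\circ\pi$), so that $\sum_i(\rho_i\circ\pi)\,\alpha_i\wedge\pi^{\ast}\Omega^F=\bigl(\sum_i\rho_i\circ\pi\bigr)\mu=\mu$ goes through cleanly.
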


\begin{rem}
 The volume form $((n-1)!)^{-1} \Omega^F$ is the Holmes--Thompson volume form.
\end{rem}

The Finsler--Laplacian of a function is then obtained as an average with respect to $\alpha^F$ of the second derivatives in every directions:
\begin{defin}
\label{def:delta}
 For $f \in C^2(M)$, the operator $\Delta^F$\ is defined by, for any $x \in M$,
 $$
 \Delta^F f (x) = \frac{n}{\voleucl \left(\mathbb{S}^{n-1}\right) }\int_{H_xM} L_X ^2 (\pi^{\ast} f ) \alpha^F,
 $$
where $L_X$ denotes the Lie derivative of $X$.
\end{defin}

The constant in front of the operator is there in order to get back the usual Laplace--Beltrami operator when $F$ is Riemannian.

\subsection{Energy and spectrum} \label{subsec_energy_and_spectrum}

The Finsler--Laplacian has a naturally associated energy functional defined, for $f \in H^1(M)$, by
\begin{equation}
 E^F(f) := \frac{n}{\voleucl \left(\S^{n-1}\right) } \int_{HM} \left|L_X\left(\pi^{\ast}f \right)\right|^2 \ada. \label{eq_energy_finsler}
\end{equation}
The \emph{Rayleigh quotient} for $F$ is then defined by
\begin{equation}
 R^F(f) := \frac{E^F(f)}{\int_M f^2\, \Omega^F}.
\end{equation}

On compact manifolds, the spectrum of the Finsler--Laplacian is discrete and can be obtained, as in the Riemannian case, via the Min-Max principle. For simplicity, we state it for closed manifold:
\begin{thm}[Min-Max principle] \label{thm_min_max}
 Let $M$ be a closed manifold and $F$ a Finsler metric on $M$.
 Let $\lambda_k$ be the $k$th eigenvalue (counted with multiplicity) of $-\Delta^F$, then
\begin{equation*}
 \lambda_k = \inf_{V_k} \sup \left\{ R^F(u) \mid u \in V_k \right\}
\end{equation*}
where $V_k$ runs over all the $k$-dimensional subspaces of $H^1(M)$.
\end{thm}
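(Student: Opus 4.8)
The plan is to show that $-\Delta^F$ is precisely the non-negative self-adjoint operator associated with the quadratic form $E^F$ on the form domain $H^1(M)$, after which the stated identity is the classical Courant--Fischer min-max theorem. The first ingredient is an integration by parts on $HM$. Since $X$ is the Reeb field of $A$ we have $L_X A = d\,i_X A + i_X\,dA = 0$, hence $L_X(dA) = 0$, so $X$ preserves the volume form $A\wedge dA^{n-1}$; as $M$ is closed, $HM$ is closed and Stokes' theorem gives
\begin{equation*}
\int_{HM}(L_X u)\,v\;A\wedge dA^{n-1} = -\int_{HM} u\,(L_X v)\;A\wedge dA^{n-1},\qquad u,v\in C^1(HM).
\end{equation*}
Applying this twice, together with $A\wedge dA^{n-1} = \alpha^F\wedge\pi^{\ast}\Omega^F$ (Proposition \ref{prop:construction}) and integration over the fibres of $\pi$, yields for $f,g\in C^2(M)$ the identity $\int_M(\Delta^F f)\,g\;\Omega^F = -\tfrac{n}{\voleucl(\mathbb{S}^{n-1})}\int_{HM}L_X(\pi^{\ast}f)\,L_X(\pi^{\ast}g)\;A\wedge dA^{n-1}$, which is symmetric in $(f,g)$ and equals $-E^F(f)\le 0$ when $g=f$. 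Thus $-\Delta^F$ is symmetric and non-negative on $L^2(M,\Omega^F)$, with associated bilinear form the polarisation of $E^F$.

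Next I would check that $E^F$ is a closed form whose domain is exactly $H^1(M)$. The key observation is that at $(x,\xi)\in HM$ the function $L_X(\pi^{\ast}f)$ equals $df_x$ applied to $d\pi(X)$, the $F$-unit vector in the direction $\xi$; this depends on the \emph{values} of $F$ only, not on its derivatives. Fixing an auxiliary smooth Riemannian metric $h$ on the compact manifold $M$, the symmetric $2$-tensor $x\mapsto \int_{H_xM} d\pi(X)\otimes d\pi(X)\;\alpha^F$ is positive definite (the $F$-unit vectors span $T_xM$ and $\alpha^F$ is a positive measure of total mass $\voleucl(\mathbb{S}^{n-1})$) and depends continuously on $x$, so there are constants $0<c\le C$ with $c\,|df_x|_h^2 \le \int_{H_xM}(L_X(\pi^{\ast}f))^2\,\alpha^F \le C\,|df_x|_h^2$ for all $x\in M$. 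Moreover $\Omega^F$ is comparable to $dV_h$, since (by the Remark following Proposition \ref{prop:construction}) its density is, up to a dimensional constant, the Lebesgue volume of the polar body of the unit ball $\{v:F(x,v)\le 1\}$, which on a compact manifold is sandwiched between two fixed Euclidean balls. Integrating over $M$, $E^F$ is two-sidedly comparable to the Dirichlet energy of $h$ and $\int_M(\cdot)^2\,\Omega^F$ to the $L^2(dV_h)$ norm; hence $E^F$, finite on the dense subspace $C^\infty(M)$, extends to a closed non-negative form whose domain is the usual Sobolev space $H^1(M)$, and the self-adjoint operator it defines is the Friedrichs extension of $-\Delta^F$.

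Finally, compactness of $M$ gives the compact embedding $H^1(M)\hookrightarrow L^2(M)$ (Rellich--Kondrachov), so $-\Delta^F$ has compact resolvent, hence a discrete spectrum $\lambda_1\le\lambda_2\le\cdots\to+\infty$ with an $L^2(\Omega^F)$-orthonormal eigenbasis $(\phi_k)$. The min-max formula is then Courant--Fischer: testing on $V_k=\operatorname{span}(\phi_1,\dots,\phi_k)$ gives $\sup_{u\in V_k}R^F(u)=\lambda_k$, so the infimum is $\le\lambda_k$; conversely, any $k$-dimensional $V_k\subset H^1(M)$ contains a non-zero $u$ that is $L^2$-orthogonal to $\phi_1,\dots,\phi_{k-1}$, and $R^F(u)\ge\lambda_k$ there, so $\sup_{u\in V_k}R^F(u)\ge\lambda_k$. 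I expect the only genuine work to be the comparison estimates of the second paragraph: they are what turn the bare definition of $E^F$ into a closed Dirichlet-type form with the correct domain, and this is the step that really uses the Finsler structure (positivity of the fibre average against $\alpha^F$, Holmes--Thompson volume versus a Euclidean ball). Once that is in place, symmetry, discreteness and the min-max identity are the standard spectral theory of a non-negative self-adjoint operator with compact resolvent.
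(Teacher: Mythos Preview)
The paper does not actually prove Theorem~\ref{thm_min_max}: it is stated as a background result, with the preceding sentence indicating that it follows ``as in the Riemannian case'' and pointing to \cite{moi:these,moi:natural_finsler_laplace} for details. So there is no proof in the paper to compare against.

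That said, your sketch is correct and is essentially the argument one finds in those references. The integration-by-parts step is exactly right (Reeb fields preserve the contact volume, hence the divergence of $X$ vanishes on the closed manifold $HM$), and it identifies $E^F$ as the quadratic form of $-\Delta^F$. Your second paragraph is the substantive part: the fibre integral $\int_{H_xM} d\pi(X)\otimes d\pi(X)\,\alpha^F$ is precisely the symbol $\sigma^F$ of $\Delta^F$, which the paper uses heavily later (Proposition~\ref{prop_symbol_control}, Section~\ref{sec_control_of_energy}); its positive-definiteness is what makes $\Delta^F$ elliptic and gives the two-sided comparison with a fixed Riemannian Dirichlet form. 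One small simplification: you do not need the Holmes--Thompson interpretation to compare $\Omega^F$ with $dV_h$; on a compact manifold any smooth nowhere-vanishing volume form is uniformly comparable to any other, and $\Omega^F$ is smooth by Proposition~\ref{prop:construction}. After that, closedness of the form, compact resolvent via Rellich, and Courant--Fischer are indeed standard functional analysis, exactly as you outline.
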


\subsection{Legendre transform and dual metrics}

Finsler geometry can also be studied via the cotangent bundle, this dual point of view can sometimes be of tremendous help and will be used in this article. What we present here is fairly well known but the reader can refer for instance to \cite{moi:these} for precisions.

\begin{defin}
  Let $F$ be a Finsler metric on a manifold $M$. The dual Finsler metric ${F^{\ast} \colon T^{\ast}M \rightarrow \R}$ is defined, for $(x,p) \in T^{\ast}M$, by
\begin{equation*} %
F^{\ast}(x,p) = \sup \lbrace p(v) \mid v\in T_xM \; \text{\rm such that } F(x,v)=1 \rbrace.
\end{equation*}
\end{defin}

The Legendre transform allows one to switch from the tangent bundle to the cotangent bundle.
\begin{defin}
 The \emph{Legendre transform} $\L_F : TM \rightarrow T^{\ast}M$ associated with $F$ is defined by $L_F(x,0) = (x,0)$ and, for $(x,v) \in \mathring{T}M$ and $u \in T_xM$, 
\begin{equation*}
 \L_F ( x,v) (u) := \frac{1}{2} \left. \frac{d}{dt} F^2(x,v + tu)\right|_{t=0}.
\end{equation*}
\end{defin}
As $F^2$ is $2$-homogeneous, we have that $\L_F$ is $1$-homogeneous, so we can project $\L_F$ to the homogenized bundles. Set $H^{\ast}M := \mathring{T}^{\ast}M / \R^+_{\ast}$ and write $\ell_F\colon HM \rightarrow H^{\ast}M$ for the projection. Considering directly $\ell_F$, instead of $\L_F$, can be quite helpful sometimes.

The Legendre transform $\L_F$ is a diffeomorphism and the following diagram commutes (see for instance \cite{moi:these}):
$$
\xymatrix{
    & \Tzero^{\ast}M \ar[r]^{\hat{r}} \ar[ld]_{\hat{p}}  &   H^{\ast}M \ar[rd]^{\hat{\pi}} & \\
M   &       &       &   M  \\
    & \Tzero M \ar[uu]^{\L_F} \ar[r]_r  \ar[ul]^{p} & HM \ar[uu]_{\ell_F} \ar[ur]_{\pi} & 
}
$$
For strongly convex smooth Finsler metrics, the Legendre transform can also be described using convex geometry. The Legendre transform associated with a convex $\mathcal{C} \subset \R^n$ sends a point $x$ of $\mathcal{C}$ to the hyperplane supporting $\mathcal{C}$ at $x$, or equivalently, to the linear map $p \in \left(\R^n\right)^{\ast}$ such that $p(x)=1$ and $\ker p$ is parallel to the supporting hyperplane.

The following result will be very important for us, it is due to P.\ Foulon but was never published, we provide the proof (taken from \cite{moi:these}) below.
\begin{thm}[Foulon \cite{Fou:perso}] \label{thm_uniquely_contact}
Any Finsler metric on $M$ defines the same contact structure on $H^{\ast}M$, i.e., if $F$ is a Finsler metric on $M$ and ${B = \left(\ell_F^{-1} \right)^{\ast} A}$, the distribution $\ker B \subset TH^{\ast}M$ is independent of $F$.\\
Furthermore, if we denote by $\lambda$ the Liouville $1$-form on $T^{\ast}M$, we have 
\begin{equation} \label{eq:B_equal_louiville_over_F_star}
 \hat{r}^{\ast} B = \frac{\lambda}{F^{\ast}},
\end{equation}
and
\begin{equation} \label{eq:bdb_and_louiville}
 \hat{r}^{\ast} B\wedge dB^{n-1} = \frac{\lambda \wedge d\lambda^{n-1} }{(F^{\ast})^n}. 
\end{equation}
\end{thm}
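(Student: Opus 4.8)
The plan is to transport everything to the cotangent bundle and compare with the Liouville $1$-form $\lambda$, which is canonical. The first — and really the only delicate — step is to pull the Hilbert form back to $\mathring{T}M$ via $r$. Unwinding the definitions of $A$ and of $\L_F$ and using $\pi\circ r=p$, one gets, for $(x,v)\in\mathring{T}M$ and $W\in T_{(x,v)}\mathring{T}M$,
\[
 (r^{\ast}A)_{(x,v)}(W)
 =\left.\frac{d}{d\eps}\right|_{0}F\bigl(x,\,v+\eps\, dp(W)\bigr)
 =\frac{\partial F}{\partial v}(x,v)\bigl(dp(W)\bigr)
 =\frac{1}{F(x,v)}\,\L_F(x,v)\bigl(dp(W)\bigr),
\]
the last equality because $\frac{\partial F}{\partial v}=\frac{1}{2F}\frac{\partial F^{2}}{\partial v}=\frac{1}{F}\L_F$ directly from the definition of $\L_F$. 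Since $\hat p\circ\L_F=p$, the right-hand side equals $\frac{1}{F(x,v)}(\L_F^{\ast}\lambda)_{(x,v)}(W)$; hence, as $1$-forms on $\mathring{T}M$,
\[
 r^{\ast}A=\frac{1}{F}\,\L_F^{\ast}\lambda ,
\]
where $F$ denotes the function $F\colon\mathring{T}M\to\R^{+}$.

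Next I would use the elementary identity $F^{\ast}\circ\L_F=F$. This is standard and immediate from the convex-geometric description of $\L_F$ recalled above: for $F(x,v)=1$, the covector $\L_F(x,v)$ is normalised by $\L_F(x,v)(v)=1$ and its level hyperplane supports the unit ball of $F(x,\cdot)$ at $v$, so the supremum defining $F^{\ast}\bigl(\L_F(x,v)\bigr)$ equals $1=F(x,v)$; homogeneity gives the general case. Consequently $\frac{1}{F}=\L_F^{\ast}\bigl(\frac{1}{F^{\ast}}\bigr)$, so $r^{\ast}A=\L_F^{\ast}\bigl(\lambda/F^{\ast}\bigr)$. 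Now plug in the commuting square $\hat r\circ\L_F=\ell_F\circ r$ together with $\ell_F^{\ast}B=A$:
\[
 \L_F^{\ast}\bigl(\hat r^{\ast}B\bigr)=r^{\ast}\bigl(\ell_F^{\ast}B\bigr)=r^{\ast}A=\L_F^{\ast}\left(\frac{\lambda}{F^{\ast}}\right),
\]
and cancelling the diffeomorphism $\L_F$ yields $\hat r^{\ast}B=\lambda/F^{\ast}$, which is \eqref{eq:B_equal_louiville_over_F_star}. For \eqref{eq:bdb_and_louiville} one computes $\hat r^{\ast}dB=d(\lambda/F^{\ast})=\frac{1}{F^{\ast}}\,d\lambda-\frac{1}{(F^{\ast})^{2}}\,dF^{\ast}\wedge\lambda$ and expands $\hat r^{\ast}(B\wedge dB^{n-1})=\frac{\lambda}{F^{\ast}}\wedge\bigl(\frac{1}{F^{\ast}}d\lambda-\frac{1}{(F^{\ast})^{2}}dF^{\ast}\wedge\lambda\bigr)^{n-1}$; every term containing the factor $dF^{\ast}\wedge\lambda$ is annihilated by the leading $\lambda$ since $\lambda\wedge\lambda=0$, leaving $\lambda\wedge d\lambda^{n-1}/(F^{\ast})^{n}$.

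Finally, the independence of the contact structure is a formal consequence of \eqref{eq:B_equal_louiville_over_F_star}. If $F$ and $F'$ are two Finsler metrics with associated forms $B$ and $B'$, then $\hat r^{\ast}B=\lambda/F^{\ast}$ and $\hat r^{\ast}B'=\lambda/(F')^{\ast}$ are positive scalar multiples of one another, hence have the same kernel in $T\mathring{T}^{\ast}M$; since $\hat r\colon\mathring{T}^{\ast}M\to H^{\ast}M$ is a surjective submersion whose fibre directions already lie in $\ker\lambda$ (as $\lambda$ kills the Euler vector field), this forces $\ker B=\ker B'$, and in fact $\ker B=d\hat r(\ker\lambda)$, an expression with no reference to $F$. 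That $B$ is genuinely a contact form follows either from \eqref{eq:bdb_and_louiville} and the non-vanishing of $\lambda\wedge d\lambda^{n-1}$, or simply from $B=(\ell_F^{-1})^{\ast}A$ and Hilbert's theorem above. The main obstacle is entirely in the first paragraph: keeping straight the three projections $\pi$, $p$, $\hat p$ under $r$, $\L_F$, $\hat r$ and correctly locating the factor $1/F$ relating $\partial F/\partial v$ to $\L_F$; once $r^{\ast}A=\L_F^{\ast}(\lambda/F^{\ast})$ is in hand, the rest is bookkeeping.
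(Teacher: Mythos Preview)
Your proof is correct and follows essentially the same route as the paper's: both arguments pull everything back to $\mathring{T}M$ via $\L_F$, identify $r^{\ast}A$ with $d_vF=\frac{1}{F}\L_F^{\ast}\lambda$, use $F^{\ast}\circ\L_F=F$ to rewrite this as $\L_F^{\ast}(\lambda/F^{\ast})$, and then cancel the diffeomorphism $\L_F$; the computation of $B\wedge dB^{n-1}$ is likewise identical. You are simply more explicit in two places where the paper is terse---the convex-geometry justification of $F^{\ast}\circ\L_F=F$ and the descent of $\ker\lambda$ through the submersion $\hat r$ to conclude $\ker B$ is $F$-independent---but neither constitutes a different idea.
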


\begin{proof} 
We will start by showing Equation \eqref{eq:B_equal_louiville_over_F_star}. First recall the definition of the Liouville form: for any $\in T^{\ast}M$, $\lambda_{p} = p \circ d\hat{p}_{|M}$, where $\hat{p}_{|M} \colon T^{\ast}M \rightarrow M$ is the base point projection. In order to show that $\hat{r}^{\ast} B = \frac{\lambda}{F^{\ast}}$, we will prove that their pull-back by $\L_F$ coincides.

On one hand, as $\hat{r} \circ \L_F = r \circ \ell_F$, we have
\begin{equation*}
\L_F^{\ast} \hat{r}^{\ast} B = r^{\ast}  \ell_F^{\ast} B = r^{\ast} A = d_vF \, ,
\end{equation*}
and on the other hand,
\begin{equation*}
 \L_F^{\ast} \left( \frac{\lambda}{F^{\ast}} \right) = \frac{\L_F^{\ast} \lambda }{ F^{\ast} \circ \L_F} = \frac{\L_F^{\ast} \lambda }{F} \, .
\end{equation*}
Now, let us compute $\L_F^{\ast} \lambda $: for $(x,v) \in TM$ and $Z \in T_{(x,v)} TM$,
\begin{align*}
\left( \L_F^{\ast} \lambda\right)_{(x,v)} (Z) &= \lambda_{\L_F(x,v)} \left( d\L_F (Z) \right) \\
	&= \L_F(x,v) \circ d\hat{p}_{|M} \circ d\L_F (Z) \\
	&= \L_F(x,v) \circ dp_{|M} (Z) \\
	&= \frac{1}{2} \frac{d}{dt} F^2\left(x, v + t dp_{|M} (Z) \right) \\
	&= F(x,v) d_vF_{(x,v)}(Z)\, .
\end{align*}
And we proved Equation \eqref{eq:B_equal_louiville_over_F_star}. Once we have that, the uniqueness of the contact structure is trivial.
 
For the last equality, we have
\begin{equation*}
 \hat{r}^{\ast} dB = d \hat{r}^{\ast} B =  \frac{d\lambda}{F^{\ast}} - \frac{\lambda \wedge dF^{\ast} }{(F^{\ast})^2}\, .
\end{equation*}
Therefore $\hat{r}^{\ast} dB^{n-1} =  \left(\frac{d\lambda}{F^{\ast}}\right)^{n-1} + \lambda\wedge\left( \text{Something} \right)$, so
\begin{equation*}
 \hat{r}^{\ast} B\wedge dB^{n-1}=  \frac{\lambda \wedge d\lambda^{n-1}}{(F^{\ast})^{n}} + \lambda \wedge \lambda\wedge\left( \text{Something} \right) = \frac{\lambda \wedge d\lambda^{n-1}}{(F^{\ast})^{n}}\, . \qedhere
\end{equation*}
\end{proof}

This Theorem allows us to deduce expressions for the volume and angle of one Finsler metric with respect to another one:
\begin{lem} \label{lem_alpha_and_omega}
 Let $F$ and $F_0$ be two Finsler metrics on $M$. Let $\mu \colon M \rightarrow \R$ defined by
\begin{equation*}
 \mu (x) :=  \left(\voleucl \left(\S^{n-1} \right) \right)^{-1} \int_{H_x^{\ast} M} \left(\frac{F_0^{\ast}}{F^{\ast}} \right)^n \beta^{F_0},
\end{equation*}
where $\beta^{F_0} = \left(\ell_{F_0}^{-1}\right)^{\ast} \alpha^{F_0}$, and $ F_0^{\ast}/F^{\ast}$ is seen as a function on $H^{\ast}M$.
Then we have
\begin{align*}
 \Omega^F &= \mu(x) \Omega^{F_0} \\
 \alpha^F &= \left(\ell_{F_0}^{-1} \circ \ell_F \right)^{\ast} \left[ \mu^{-1} \left(\frac{F_0^{\ast}}{F^{\ast}} \circ \ell_{F_0}^{-1} \right)^n \alpha^{F_0} \right].
\end{align*}
\end{lem}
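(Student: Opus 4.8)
The plan is to transport the defining relation $\alpha^{G}\wedge\pi^{\ast}\Omega^{G}=A^{G}\wedge(dA^{G})^{n-1}$, for $G\in\{F,F_{0}\}$, to the cotangent side, where Theorem~\ref{thm_uniquely_contact} makes the two metrics directly comparable, and then to read off $\Omega^{F}$ by integrating along the fibres of $\hat{\pi}\colon H^{\ast}M\to M$ and $\alpha^{F}$ by restricting to them.

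First I would pass to $H^{\ast}M$. For $G\in\{F,F_{0}\}$ set $B^{G}:=(\ell_{G}^{-1})^{\ast}A^{G}$ and $\beta^{G}:=(\ell_{G}^{-1})^{\ast}\alpha^{G}$, so that $\beta^{F_{0}}$ is the form appearing in the statement. Since $\hat{\pi}\circ\ell_{G}=\pi$ by commutativity of the diagram above, pulling the defining relation back by $\ell_{G}^{-1}$ gives on $H^{\ast}M$
\begin{equation*}
\beta^{G}\wedge\hat{\pi}^{\ast}\Omega^{G}=B^{G}\wedge(dB^{G})^{n-1}.
\end{equation*}
Pulling this further back by $\hat{r}$ and invoking \eqref{eq:bdb_and_louiville} turns the right-hand side into $\lambda\wedge d\lambda^{n-1}/(G^{\ast})^{n}$. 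Dividing the identity for $G=F$ by the one for $G=F_{0}$, the Liouville factor cancels; since $F_{0}^{\ast}/F^{\ast}$ is $0$-homogeneous, hence descends to a function on $H^{\ast}M$, and $\hat{r}^{\ast}$ is injective (as $\hat{r}$ is a surjective submersion), I obtain on $H^{\ast}M$
\begin{equation*}
\beta^{F}\wedge\hat{\pi}^{\ast}\Omega^{F}=\left(\frac{F_{0}^{\ast}}{F^{\ast}}\right)^{n}\beta^{F_{0}}\wedge\hat{\pi}^{\ast}\Omega^{F_{0}}.\tag{$\ast$}
\end{equation*}

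Next I would extract $\Omega^{F}$. Write $\Omega^{F}=\mu_{0}\,\Omega^{F_{0}}$ with $\mu_{0}\in C^{\infty}(M)$ positive (both are volume forms on $M$). Substituting into $(\ast)$ shows that the $(n-1)$-form $\mu_{0}\beta^{F}-(F_{0}^{\ast}/F^{\ast})^{n}\beta^{F_{0}}$ wedges to zero against $\hat{\pi}^{\ast}\Omega^{F_{0}}$, hence restricts to zero on each fibre $H_{x}^{\ast}M$. Since $\ell_{G}$ restricts to a fibrewise diffeomorphism covering $\mathrm{id}_{M}$, the normalization \eqref{eq:longueur_fibre} transfers to $\int_{H_{x}^{\ast}M}\beta^{G}=\voleucl(\S^{n-1})$, so integrating the fibrewise identity over $H_{x}^{\ast}M$ yields $\mu_{0}(x)\voleucl(\S^{n-1})=\int_{H_{x}^{\ast}M}(F_{0}^{\ast}/F^{\ast})^{n}\beta^{F_{0}}$; that is, $\mu_{0}=\mu$ and $\Omega^{F}=\mu\,\Omega^{F_{0}}$.

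The angle form is the main obstacle, and where the real content lies. Feeding $\Omega^{F}=\mu\,\Omega^{F_{0}}$ back, $(\ast)$ reads $\bigl[\mu\beta^{F}-(F_{0}^{\ast}/F^{\ast})^{n}\beta^{F_{0}}\bigr]\wedge\hat{\pi}^{\ast}\Omega^{F_{0}}=0$, and the difficulty is that wedging with $\hat{\pi}^{\ast}\Omega^{F_{0}}$ discards the non-fibre part of an $(n-1)$-form, so this controls the bracket only along the fibres. I would close the gap with the uniqueness clause of Proposition~\ref{prop:construction} --- equivalently, with the fact that the angle form is the distinguished vertical representative among $(n-1)$-forms obeying the wedge relation --- whence the bracket, being vertical with vanishing fibre restriction, vanishes identically; alternatively one verifies directly that the candidate $(\ell_{F_{0}}^{-1}\circ\ell_{F})^{\ast}\bigl[\mu^{-1}(F_{0}^{\ast}/F^{\ast}\circ\ell_{F_{0}})^{n}\alpha^{F_{0}}\bigr]$ is nowhere zero on $VHM$, has fibre integral $\voleucl(\S^{n-1})$ over each $H_{x}M$ (by the two changes of variables used above), and wedges with $\pi^{\ast}\Omega^{F}$ to $A^{F}\wedge(dA^{F})^{n-1}$ thanks to $(\ast)$, and then appeals to uniqueness. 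Either way this gives $\beta^{F}=\mu^{-1}(F_{0}^{\ast}/F^{\ast})^{n}\beta^{F_{0}}$ on $H^{\ast}M$; transporting back via $\alpha^{F}=\ell_{F}^{\ast}\beta^{F}$, using $\ell_{F}^{\ast}(\ell_{F_{0}}^{-1})^{\ast}=(\ell_{F_{0}}^{-1}\circ\ell_{F})^{\ast}$ and $\pi\circ\ell_{F_{0}}^{-1}\circ\ell_{F}=\pi$, produces the stated formula. What is left is bookkeeping: keeping track that $\ell_{F}\neq\ell_{F_{0}}$ --- which is exactly why $\ell_{F_{0}}^{-1}\circ\ell_{F}$, and not the identity, appears --- and following how the scalar factors pull back through $\pi$, $\hat{\pi}$, $\ell_{F}$ and $\hat{r}$.
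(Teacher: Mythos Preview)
Your proof is correct and follows essentially the same route as the paper: transport the defining splitting to $H^{\ast}M$, invoke Theorem~\ref{thm_uniquely_contact} to compare $B\wedge dB^{n-1}$ with $B_{0}\wedge dB_{0}^{n-1}$, then use the uniqueness in Proposition~\ref{prop:construction} together with fibre integration to identify $\Omega^{F}$ and $\beta^{F}$, and finally pull back via $\ell_{F}$. The paper is terser about the uniqueness step for $\beta^{F}$ --- it simply asserts both formulas from the uniqueness clause --- whereas you spell out that the wedge relation pins down only the fibrewise part and that one must appeal to uniqueness (or verify the candidate directly) to conclude; this extra care is sound but not a different argument.
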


\begin{proof}
 Let $F$ and $F_0$ be two Finsler metrics on $M$. Let $A$ and $A_0$ be the Hilbert forms of respectively $F$ and $F_0$, and let $B = \left(\ell_F^{-1}\right)^{\ast} A$ and $B_0 = \left(\ell_{F_0}^{-1}\right)^{\ast} A_0$.
 By Theorem \ref{thm_uniquely_contact}, 
\[
 B\wedge dB^{n-1} = \left(\frac{F_0^{\ast}}{F^{\ast}}\right)^n B_0 \wedge dB_0^{n-1},
\]
where $F_0^{\ast}/F^{\ast}$ is considered as a function from $H^{\ast}M$ to $\R$. Now, if $\beta^F = \left(\ell_F^{-1}\right)^{\ast}\alpha^F$,  $\beta^{F_0} = \left(\ell_{F_0}^{-1}\right)^{\ast} \alpha^{F_0}$ and $\hat{\pi} \colon H^{\ast}M \rightarrow M$ is the natural projection, we have, by Proposition \ref{prop:construction},
\begin{equation*}
 \beta^F \wedge \hat{\pi}^{\ast} \Omega^F =  B\wedge dB^{n-1} = \left(\frac{F_0^{\ast}}{F^{\ast}}\right)^n B_0 \wedge dB_0^{n-1} = \left(\frac{F_0^{\ast}}{F^{\ast}}\right)^n \beta^{F_0} \wedge \hat{\pi}^{\ast} \Omega^{F_0}.
\end{equation*}
As $\Omega^F$ is the unique volume form such that $\beta^F \wedge \hat{\pi}^{\ast} \Omega^F =  B\wedge dB^{n-1}$ and $\int_{H_x^{\ast}M} \beta^F = \voleucl \left( \S^{n-1} \right)$, we obtain
\begin{equation*}
 \Omega^F =  \frac{\int_{H_x^{\ast} M} \left(\frac{F_0^{\ast}}{F^{\ast}} \right)^n \beta^{F_0}}{\voleucl \left(\S^{n-1} \right)} \; \Omega^{F_0},
\end{equation*}
and 
\begin{equation*}
 \beta^F = \frac{\voleucl \left(\S^{n-1} \right)}{\int_{H_x^{\ast} M} \left(\frac{F_0^{\ast}}{F^{\ast}} \right)^n \beta^{F_0}}   \left(\frac{F_0^{\ast}}{F^{\ast}}\right)^n \beta^{F_0} .
\end{equation*}
Writing $\beta^F = \left(\ell_F^{-1}\right)^{\ast}\alpha^F$ and $\beta^{F_0} = \left(\ell_{F_0}^{-1}\right)^{\ast} \alpha^{F_0}$ gives the result.
\end{proof}

\section{Bi-Lipschitz control of the energy} \label{sec_control_of_energy}

The goal of this section is to prove the following theorem:

\begin{thm} \label{thm_energy_control}
 Let $F$ and $F_0$ be two Finsler metrics on a $n$-manifold $M$.
Suppose that there exists $C >1$ such that, for any $(x,v) \in TM$,
\begin{equation*}
 C^{-1} \leq \frac{F(x,v)}{F_{0}(x,v)} \leq C.
\end{equation*}
 Let $C_1$ and $C_2$ be the quasireversibility constants of $F$ and $F_0$ respectively. Then, there exists a constant $K\geq 1$, depending on $C$, $C_1$, $C_2$ and $n$, such that, for any $f \in H^1(M)$,
\begin{equation*}
 C^{-K} \leq \frac{E^F(f)}{E^{F_{0}}(f)} \leq C^K.
\end{equation*}
\end{thm}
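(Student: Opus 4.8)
The plan is to rewrite the energy as a quadratic functional of $df$ on $M$, factoring the metric's influence into a ``cometric'' on $T^{\ast}M$ and a volume form on $M$, and then to control each factor separately. Since $X$ is the Reeb field of the Hilbert form, $d\pi_{(x,\xi)}(X)$ is the unique $F$-unit vector $v^{F}_{x,\xi}$ in the direction $\xi$, so $L_X(\pi^{\ast}f)(x,\xi)=d_xf\bigl(v^{F}_{x,\xi}\bigr)$; combining this with $A\wedge dA^{n-1}=\alpha^{F}\wedge\pi^{\ast}\Omega^{F}$ and Fubini for the bundle $\pi\colon HM\to M$ gives
\[
 E^{F}(f)=\frac{n}{\voleucl(\S^{n-1})}\int_{M}G^{F}_{x}(d_xf)\,\Omega^{F},\qquad G^{F}_{x}(p):=\int_{H_xM}\bigl(p(v^{F}_{x,\xi})\bigr)^{2}\,\alpha^{F},
\]
where $G^{F}_{x}$ is a positive-definite quadratic form on $T^{\ast}_xM$ (the identity, obtained first for smooth $f$, extends to $H^{1}(M)$ by density), and likewise for $F_0$. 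Thus it suffices to bound $\Omega^{F}/\Omega^{F_0}$ and $G^{F}_{x}/G^{F_0}_{x}$ above and below by constants of the allowed form.

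The bi-Lipschitz hypothesis dualizes to $C^{-1}\le F^{\ast}/F_0^{\ast}\le C$ on $T^{\ast}M$; inserting this and $\int_{H_x^{\ast}M}\beta^{F_0}=\voleucl(\S^{n-1})$ into Lemma \ref{lem_alpha_and_omega} yields $C^{-n}\Omega^{F_0}\le\Omega^{F}\le C^{n}\Omega^{F_0}$. For the cometric one inequality is immediate: applying $p(w)\le F^{\ast}(x,p)\,F(x,w)$ with $w=v^{F}_{x,\xi}$ gives $p(v^{F}_{x,\xi})\le F^{\ast}(x,p)$, hence $G^{F}_{x}(p)\le\voleucl(\S^{n-1})\,F^{\ast}(x,p)^{2}$, and symmetrically for $F_0$.

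The heart of the proof is the reverse bound $G^{F}_{x}(p)\ge\kappa\,F^{\ast}(x,p)^{2}$ with $\kappa=\kappa(n,C_1)>0$, and for this I pass to the cotangent side. Fix on each $T^{\ast}_xM$ the Euclidean structure dual to the Binet--Legendre metric of $F$ \cite{MatveevTroyanov}, which is bi-Lipschitz to $F^{\ast}$ with constant $\Lambda=\Lambda(n,C_1)$, so that $\Lambda^{-1}B\subseteq K^{\ast}_x\subseteq\Lambda B$ for $K^{\ast}_x:=\{\,p\in T^{\ast}_xM:F^{\ast}(x,p)\le1\,\}$ and the Euclidean ball $B$; all Euclidean notions below refer to this structure. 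A short computation with the inverse Legendre transform gives $v^{F}_{x,\xi}=\nabla_{q}F^{\ast}(x,q)=\mathbf n(q)/h_x(q)$, where $q\in\partial K^{\ast}_x$ corresponds to $\xi$, $\mathbf n(q)$ is the outer unit normal and $h_x(q):=\langle q,\mathbf n(q)\rangle$ is the value of the support function at $\mathbf n(q)$; together with Theorem \ref{thm_uniquely_contact} and Proposition \ref{prop:construction}, which identify the pushforward $\bar\beta^{F}$ of $\alpha^{F}$ to $\partial K^{\ast}_x$ (up to a normalizing constant) with the cone measure of $K^{\ast}_x$, this rewrites
\[
 G^{F}_{x}(p)=\int_{\partial K^{\ast}_x}\Bigl(\frac{\langle p,\mathbf n(q)\rangle}{h_x(q)}\Bigr)^{2}d\bar\beta^{F}(q).
\]
Cauchy--Schwarz bounds this below by $\voleucl(\S^{n-1})^{-1}\bigl(\int_{\partial K^{\ast}_x}|\langle p,\mathbf n(q)\rangle|\,h_x(q)^{-1}\,d\bar\beta^{F}\bigr)^{2}$; since $h_x\le\Lambda$ and, because $\Lambda^{-1}B\subseteq K^{\ast}_x\subseteq\Lambda B$, the density of $\bar\beta^{F}$ with respect to Euclidean surface measure on $\partial K^{\ast}_x$ is bounded below in terms of $n$ and $\Lambda$, the inner integral is at least a constant (depending on $n,\Lambda$) times $\int_{\partial K^{\ast}_x}|\langle p,\mathbf n\rangle|\,dS=2|p|\cdot\mathrm{vol}_{n-1}\!\bigl(\text{projection of }K^{\ast}_x\text{ onto }p^{\perp}\bigr)$ by Cauchy's projection formula. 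Finally the sandwiching $\Lambda^{-1}B\subseteq K^{\ast}_x\subseteq\Lambda B$ bounds that projection from below and $\mathrm{vol}(K^{\ast}_x)$ from above, and $F^{\ast}(x,p)\le\Lambda|p|$; assembling these estimates gives $G^{F}_{x}(p)\ge\kappa(n,C_1)\,F^{\ast}(x,p)^{2}$, and the same argument applied to $F_0$ gives $G^{F_0}_{x}(p)\ge\kappa'(n,C_2)\,F_0^{\ast}(x,p)^{2}$.

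Combining the four bounds with $F^{\ast}\le CF_0^{\ast}$: almost everywhere on $M$,
\[
 G^{F}_{x}(d_xf)\le\voleucl(\S^{n-1})\,F^{\ast}(x,d_xf)^{2}\le\voleucl(\S^{n-1})\,\frac{C^{2}}{\kappa'}\,G^{F_0}_{x}(d_xf),
\]
so $E^{F}(f)\le (C^{n+2}/\kappa')\,E^{F_0}(f)$, and by symmetry $E^{F_0}(f)\le (C^{n+2}/\kappa)\,E^{F}(f)$. As $C>1$ and $\kappa,\kappa'$ depend only on $n,C_1,C_2$, one can choose $K=K(C,C_1,C_2,n)\ge1$ with $C^{K}\ge\max\{C^{n+2}/\kappa,\,C^{n+2}/\kappa'\}$, which is the theorem. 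The one genuinely delicate point is the lower bound on $G^{F}_{x}$: the angle form $\alpha^{F}$ is itself \emph{not} uniformly comparable to a round measure within a bi-Lipschitz class — its density reflects the curvature of the indicatrix of $F$, which is not controlled by bi-Lipschitz bounds (think of a slightly smoothed cube) — so a lower bound on $G^{F}_{x}$ can only be recovered after averaging, which is exactly what the cone-measure identification combined with the projection and Cauchy--Schwarz estimates delivers.
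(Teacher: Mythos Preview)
Your argument is correct and takes a genuinely different route from the paper for the crucial lower bound on the symbol. The paper proves the analogue of your inequality $G^{F}_x(p)\ge\kappa\,F^{\ast}(x,p)^2$ by working on $H_xM$: it compares $F$ to a bi-Lipschitz Riemannian metric $g_0$, writes the integral as $\int_{H_xM}(L_{X_0}\pi^{\ast}\phi\circ\ell_F^{-1}\circ\ell_{F_0})^{2}\alpha^{F_0}$, and then shows by an explicit planar convexity argument (Figure~\ref{fig_dessins_pr_preuve}) that the Legendre map $\ell_F^{-1}\circ\ell_{F_0}$ cannot rotate a cap of size $\eps=\eps(C)$ around the ``pole'' too far from the pole; integrating over that cap gives the lower bound. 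You instead push everything to $\partial K^{\ast}_x\subset T^{\ast}_xM$, identify $\bar\beta^{F}$ with the normalized cone measure (which is correct: by Lemma~\ref{lem_alpha_and_omega} with $F_0=\sqrt{g_F}$ one gets $\beta^{F}=\mu^{-1}(|q|/F^{\ast}(q))^{n}d\sigma$ on $\S^{n-1}$, which is exactly the cone measure renormalized to mass $\voleucl(\S^{n-1})$), and then combine Cauchy--Schwarz with Cauchy's projection formula. Your approach is more conceptual and plugs directly into classical convex-geometric machinery; the paper's is more elementary and hands-on, yielding fairly explicit constants without invoking the projection formula. One small imprecision: your upper bound $G^{F}_x(p)\le\voleucl(\S^{n-1})F^{\ast}(x,p)^2$ needs a factor $C_1^{2}$, since $p(v^{F}_{x,\xi})\le F^{\ast}(x,p)$ but the lower side only gives $p(v^{F}_{x,\xi})\ge -F^{\ast}(x,-p)\ge -C_1 F^{\ast}(x,p)$; this is harmless for the final statement.
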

Once again, let us emphasize that even so this result seems natural, it is far from obvious because the energy does a priori depends on the derivatives of the Finsler metric.

\begin{rem}
 In the proof of the Theorem, we will show that, if $\sigma^F$ and $\sigma^{F_0}$ are the respective symbols of $\Delta^F$ and $\Delta^F_0$, then there exists a constant $K(C,C_1,C_2,n)$ such that, for any $p \in T^{\ast}M$,
\begin{equation*}
 C^{-K(C,C_1,C_2,n)} \leq \frac{\lVert p\rVert_{\sigma_F}}{\lVert p\rVert_{\sigma_{F_0}} } \leq C^{K(C,C_1,C_2,n)}.
\end{equation*}
Note also that there is a link between $C_1$, $C_2$ and $C$, for instance, given $C$ and $C_1$, we can get an upper bound on $C_2$. However, this fact is not useful for our purposes.
\end{rem}

But before starting the proof of this result, let us state its main corollary. Recall that, when $M$ is compact, the spectrum of the Laplacian is obtained from the Rayleigh quotient via the Min-Max Principle (see Theorem \ref{thm_min_max}). As our Theorem gives a control of the energy of two bi-Lipschitz metrics and the Lemma \ref{lem_control_of_m_mu} below gives a control of the volumes, we control the Rayleigh quotient. A direct consequence is that we control the spectrum:
\begin{cor} \label{cor_control_spectrum}
 Let $M$ be a compact $n$-manifold. Let $F$ and $F_0$ be two Finsler metrics on $M$ such that, for some $C >1$ and for any $(x,v) \in TM$,
\begin{equation*}
 C^{-1} \leq \frac{F(x,v)}{F_{0}(x,v)} \leq C.
\end{equation*}
  Let $\lambda_k(F)$ and $\lambda_k(F_0)$ be the $k$th eigenvalue of $-\Delta^F$ and $-\Delta^{F_0}$ respectively. Then, there exists a constant $K'\geq 1$, depending on $C$, $C_1$, $C_2$ and $n$, such that
\begin{equation*}
 C^{-K'} \leq \frac{\lambda_k(F)}{\lambda_k(F_0)} \leq C^{K'}.
\end{equation*}
\end{cor}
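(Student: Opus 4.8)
The plan is to reduce the statement to the Min--Max characterization of Theorem \ref{thm_min_max}, using Theorem \ref{thm_energy_control} to control the numerator of the Rayleigh quotient and a volume estimate to control the denominator. First I would observe that bi-Lipschitz equivalence of $F$ and $F_0$ with constant $C$ passes to the dual metrics: from $F_0/C \leq F \leq C F_0$ on $TM$ one gets, by taking polars of the unit balls (equivalently, directly from the definition of $F^\ast$ as a supremum), that $F_0^\ast/C \leq F^\ast \leq C F_0^\ast$ on $T^\ast M$. Hence, in the notation of Lemma \ref{lem_alpha_and_omega}, the density $\mu(x) = \left(\voleucl(\S^{n-1})\right)^{-1}\int_{H_x^\ast M}\left(F_0^\ast/F^\ast\right)^n \beta^{F_0}$ satisfies $C^{-n} \leq \mu(x) \leq C^{n}$ for all $x$ --- this is Lemma \ref{lem_control_of_m_mu} --- so that $\Omega^F = \mu\,\Omega^{F_0}$ gives
\begin{equation*}
 C^{-n}\int_M f^2\,\Omega^{F_0} \;\leq\; \int_M f^2\,\Omega^F \;\leq\; C^{n}\int_M f^2\,\Omega^{F_0}
\end{equation*}
for every $f$. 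Since the two metrics are bi-Lipschitz, the Sobolev space $H^1(M)$ is the same for both, so this holds for all $f \in H^1(M)$.

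Next I would combine this with Theorem \ref{thm_energy_control}. Let $K$ be the constant it provides and set $K' = K + n$. Then, for every nonzero $f \in H^1(M)$,
\begin{equation*}
 R^F(f) = \frac{E^F(f)}{\int_M f^2\,\Omega^F} \leq \frac{C^{K}E^{F_0}(f)}{C^{-n}\int_M f^2\,\Omega^{F_0}} = C^{K'} R^{F_0}(f),
\end{equation*}
and symmetrically $R^F(f) \geq C^{-K'}R^{F_0}(f)$. Thus the Rayleigh quotients of the two metrics are comparable up to the factor $C^{\pm K'}$, uniformly over all test functions.

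Finally I would run the variational argument. By Theorem \ref{thm_min_max}, $\lambda_k(F) = \inf_{V_k}\sup\{R^F(u)\mid u\in V_k,\ u\neq 0\}$, where $V_k$ ranges over the $k$-dimensional subspaces of $H^1(M)$; the point is that this family of admissible subspaces is identical for $F$ and $F_0$. Fixing such a $V_k$ and taking the supremum over $u\in V_k$ in $R^F(u)\leq C^{K'}R^{F_0}(u)$ gives $\sup_{V_k}R^F \leq C^{K'}\sup_{V_k}R^{F_0}$, and taking the infimum over $V_k$ then gives $\lambda_k(F)\leq C^{K'}\lambda_k(F_0)$; the reverse inequality follows the same way from $R^F\geq C^{-K'}R^{F_0}$. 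This is exactly the claimed double inequality, with $K'$ depending only on $C$, $C_1$, $C_2$ and $n$ since $K$ does.

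I do not expect any real obstacle here: all the analytic content sits in Theorem \ref{thm_energy_control} (and Lemma \ref{lem_control_of_m_mu}), which we may assume. The only things to be careful about are that $H^1(M)$ --- and hence the set of test subspaces in the Min--Max --- does not depend on which of the two bi-Lipschitz metrics we use, and that the energy and volume bounds are genuinely uniform in $f$, so that they survive taking the supremum over $V_k$ and the infimum over all $V_k$.
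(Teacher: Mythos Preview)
Your proof is correct and follows exactly the approach the paper indicates: control the numerator of the Rayleigh quotient via Theorem~\ref{thm_energy_control}, the denominator via the volume estimate of Lemma~\ref{lem_control_of_m_mu}, and conclude by the Min--Max principle of Theorem~\ref{thm_min_max}. The paper does not spell out more than this, so your write-up is in fact more detailed than the original.
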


The proof of Theorem \ref{thm_energy_control} takes up the rest of this section.

\begin{lem} \label{lem_control_of_m_mu}
 Let $F$ and $F_0$ be two Finsler metrics on a $n$-manifold $M$.
Suppose that there exists $C >1$ such that, for any $(x,v) \in TM$,
\begin{equation*}
 C^{-1} \leq \frac{F(x,v)}{F_{0}(x,v)} \leq C.
\end{equation*}
Let $m \colon HM \rightarrow \R$ and $Y \colon HM \rightarrow VHM$ such that $X^F = mX^{F_0} +Y$. Let ${\mu \colon M \rightarrow \R}$ such that $\Omega^F = \mu \Omega^{F_0}$. Then,
\begin{align}
 C^{-1} &\leq \frac{F^{\ast}(x,v)}{F^{\ast}_{0}(x,v)} \leq C, \label{eq_dual_control}\\
 C^{-n} &\leq \mu \leq C^n \label{eq_control_volume} , \\
 C^{-1} &\leq m \leq C \label{eq_control_m} .
\end{align}
\end{lem}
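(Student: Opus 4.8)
The three inequalities are of increasing difficulty, and I would attack them in the order in which they are stated, using each to feed the next.

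\emph{The dual bound \eqref{eq_dual_control}.} This is immediate from the definition of the dual metric. Fix $(x,p) \in T^\ast M$. By definition $F^\ast(x,p) = \sup\{p(v) \mid F(x,v) = 1\}$, and the constraint set $\{F(x,v)=1\}$ is squeezed between $\{F_0(x,v) = C\}$ and $\{F_0(x,v) = C^{-1}\}$ by the hypothesis. More precisely, if $F(x,v)=1$ then $C^{-1}\le F_0(x,v)\le C$, so $v/F_0(x,v)$ has $F_0$-norm $1$ and $p(v) = F_0(x,v)\, p\bigl(v/F_0(x,v)\bigr) \le C F_0^\ast(x,p)$; taking the supremum over such $v$ gives $F^\ast \le C F_0^\ast$. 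The reverse inequality is symmetric, so $C^{-1} \le F^\ast/F_0^\ast \le C$ on $H^\ast M$.

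\emph{The volume bound \eqref{eq_control_volume}.} Here I would use Lemma \ref{lem_alpha_and_omega}, which expresses $\mu$ directly as
$$
\mu(x) = \bigl(\voleucl(\S^{n-1})\bigr)^{-1} \int_{H_x^\ast M} \left(\frac{F_0^\ast}{F^\ast}\right)^n \beta^{F_0}.
$$
By the bound \eqref{eq_dual_control} just proved, the integrand satisfies $C^{-n} \le (F_0^\ast/F^\ast)^n \le C^n$ pointwise on $H_x^\ast M$, and since $\beta^{F_0}$ is a positive density on the fibre with $\int_{H_x^\ast M}\beta^{F_0} = \voleucl(\S^{n-1})$ (Proposition \ref{prop:construction} transported via $\ell_{F_0}$), integrating gives $C^{-n} \le \mu(x) \le C^n$ at once.

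\emph{The bound on $m$ \eqref{eq_control_m}.} This is the genuinely delicate step, since $m$ involves the Reeb dynamics and hence derivatives of $F$. The idea is to read $m$ off from the Hilbert/Liouville forms rather than from the flows directly. Write $B = (\ell_F^{-1})^\ast A$ and $B_0 = (\ell_{F_0}^{-1})^\ast A_0$ on $H^\ast M$; by Theorem \ref{thm_uniquely_contact} these define the \emph{same} contact distribution $\ker B = \ker B_0$, and $\hat r^\ast B = \lambda/F^\ast$, $\hat r^\ast B_0 = \lambda/F_0^\ast$, so on $H^\ast M$ we have the clean relation $B = (F_0^\ast/F^\ast)\, B_0$ with $F_0^\ast/F^\ast$ a function on $H^\ast M$ (it descends because $F^\ast$ and $F_0^\ast$ are both $1$-homogeneous). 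Let $X^F$, $X^{F_0}$ be the Reeb fields of $A$, $A_0$, and push them to $H^\ast M$ via $\ell_F$, $\ell_{F_0}$; the decomposition $X^F = m X^{F_0} + Y$ with $Y$ vertical corresponds, after applying $B_0$ and using that $B_0$ annihilates vertical vectors and $\ker B_0 = \ker B$, to the identity $m = B_0\bigl((\ell_F)_\ast X^F\bigr) = (F^\ast/F_0^\ast)\, B\bigl((\ell_F)_\ast X^F\bigr) = (F^\ast/F_0^\ast)\cdot 1 = F^\ast/F_0^\ast$ (using $A(X^F)=1$, hence $B$ applied to the Reeb field of $A$ equals $1$). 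Once $m = F^\ast/F_0^\ast$ is established as a pointwise identity of functions, the bound $C^{-1}\le m\le C$ follows immediately from \eqref{eq_dual_control}.

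\textbf{Main obstacle.} The only subtle point is the identification $m = F^\ast/F_0^\ast$: one must be careful that "$Y$ vertical" in $VHM$ transports under $\ell_F$ to a vector lying in the common contact distribution on which $B_0$ (equivalently $B$) vanishes, so that applying $B_0$ kills the $Y$-term cleanly; this is exactly where Foulon's theorem that all Finsler metrics share one contact structure is used, and also where one checks that the various projections $r$, $\ell_F$, $\hat r$ fit together via the commutative diagram in the excerpt. The quasireversibility constants $C_1$, $C_2$ do not actually enter these three estimates — they are needed only later, for controlling the angle forms $\alpha^F$ versus $\alpha^{F_0}$ and hence the full energy in Theorem \ref{thm_energy_control} — so the constant "$K$" here can be taken to depend on $C$ and $n$ alone.
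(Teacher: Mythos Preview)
Your arguments for \eqref{eq_dual_control} and \eqref{eq_control_volume} are correct and essentially identical to the paper's.

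For \eqref{eq_control_m}, however, your detour through $H^{\ast}M$ contains a genuine error. You assert that applying $B_0$ to the pushed-forward decomposition yields $m = B_0\bigl((\ell_F)_\ast X^F\bigr)$; this would require $B_0\bigl((\ell_F)_\ast X^{F_0}\bigr)=1$. But the Reeb field of $B_0$ is $(\ell_{F_0})_\ast X^{F_0}$, not $(\ell_F)_\ast X^{F_0}$, and these differ as soon as $\ell_F \neq \ell_{F_0}$. In fact the pointwise identity $m = (F^{\ast}/F_0^{\ast})\circ \ell_F$ is false already for two Euclidean metrics: take $F_0$ standard on $\R^2$ and $F(v_1,v_2)=\sqrt{v_1^2+4v_2^2}$; at the direction $\theta=\pi/4$ one computes $(F^{\ast}/F_0^{\ast})\circ \ell_F \neq F_0/F$, whereas $m=F_0/F$ is the correct value (see below). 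So the step ``$m = F^{\ast}/F_0^{\ast}$'' does not hold, and the passage through Theorem~\ref{thm_uniquely_contact} does not buy you what you need here.

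The paper's route is much shorter and stays entirely on $HM$: since the Hilbert form $A^F$ vanishes on $VHM$ (immediate from its definition via $d\pi$), pairing the decomposition with $A^F$ gives $1 = A^F(X^F) = m\,A^F(X^{F_0})$. Then $A^F_{(x,\xi)}(X^{F_0}) = F(x,v)$ for the representative $v$ of $\xi$ with $F_0(x,v)=1$, so $m = F_0/F$ and \eqref{eq_control_m} follows at once. In particular, contrary to your framing, this is the \emph{least} delicate of the three inequalities --- no dynamics or derivatives of $F$ are involved, only the zeroth-order normalisation $A^F(X^F)=1$.
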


\begin{proof}
Let us start by proving Equation \eqref{eq_dual_control}.\\
 Using the characterization of the dual norms as supremum, we have
\begin{equation*}
 F^{\ast}(x,p) = \sup_{v \in T_xM} \frac{p(v)}{F(x,v)} = \sup_{v \in T_xM} \frac{F_0(x,v)}{F(x,v)}\frac{p(v)}{F_0(x,v)}
\end{equation*}
and the equation follows directly.

We can now prove Equation \eqref{eq_control_volume}:\\
We saw in Lemma \ref{lem_alpha_and_omega} that 
\begin{equation*}
 \mu = \left(\voleucl \left(\S^{n-1} \right) \right)^{-1} \int_{H_xM} \left(\frac{F_0^{\ast}}{F^{\ast}} \right)^n \beta^{F_0},
\end{equation*}
so Equation \eqref{eq_control_volume} follows immediately.

For the last equation, we just have to remark that $1= A^F(X^F) = mA^F(X^{F_0})$. Now, for $(x,\xi) \in HM$, $A^F\left(X^{F_0}(x,\xi) \right) = F(x,v)$ where $v \in T_xM$ is a representative of $\xi$ such that $F_0(x,v)=1$. Hence the conclusion.
\end{proof}

The bulk of the proof of Theorem \ref{thm_energy_control} is contained in the following result
\begin{prop} \label{prop_symbol_control}
 Let $F$ be a Finsler metric on a $n$-manifold $M$, and $g_0$ a Riemannian metric on $M$ such that for some constant $C>1$, we have 
\begin{equation*}
 C^{-1} \leq \frac{F(x,v)}{\sqrt{g_{0}(x,v)}} \leq C.
\end{equation*}
Let us denote by $\sigma^F$ the symbol of the Finsler--Laplacian $\Delta^F$. There exists a constant $K\geq 1$, depending on $C$ and $n$, such that, for $p\in T^{\ast}M$
\begin{equation*}
 C^{-K} \leq \frac{\lVert p\rVert_{\sigma^F}}{\lVert p\rVert_{g_{0}^{\ast}}} \leq C^K.
\end{equation*}
\end{prop}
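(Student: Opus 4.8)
The plan is to compute the symbol $\sigma^F$ of $\Delta^F$ explicitly in terms of the dual metric $F^\ast$ and its angle form, and then to compare it directly with the Euclidean dual $g_0^\ast$ using the bi-Lipschitz hypothesis. First I would recall from Definition \ref{def:delta} that the symbol of $\Delta^F$ at a covector $p$ is obtained by averaging the square of $p$ evaluated along the geodesic directions: concretely, $\lVert p\rVert_{\sigma^F}^2$ should be (up to the normalizing constant $n/\voleucl(\S^{n-1})$) an integral over the fiber $H_xM$ of $\bigl(A^F(\text{horizontal lift determined by }p)\bigr)^2$ against the angle form $\alpha^F$. Passing to the cotangent picture via the Legendre transform and using Lemma \ref{lem_alpha_and_omega} (which expresses $\alpha^F$ in terms of $\alpha^{F_0}$ — here with $F_0$ replaced by $g_0$ — and the ratio $(g_0^\ast/F^\ast)^n$), the symbol becomes an integral over $H_x^\ast M$ of a quantity built purely from $p$, from $F^\ast$, and from the round angle form $\beta^{g_0}$. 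The key point is that no derivatives of $F$ survive in this expression: the Lie-derivative-squared in the definition of the energy, after the integration by parts implicit in passing to the symbol, only sees $F^\ast$ pointwise through the contact form $B$, by Theorem \ref{thm_uniquely_contact}.

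Next I would invoke Equation \eqref{eq_dual_control} of Lemma \ref{lem_control_of_m_mu} (with $F_0 = \sqrt{g_0}$), which gives $C^{-1}\le F^\ast/g_0^\ast \le C$ on all of $T^\ast M$. Substituting this into the integral formula for $\lVert p\rVert_{\sigma^F}^2$, every occurrence of the factor $(g_0^\ast/F^\ast)^n$ and of any other power of $F^\ast/g_0^\ast$ coming from the integrand gets pinched between fixed powers of $C$; the remaining integral, being taken against $\beta^{g_0}$ over the round sphere of directions, is exactly (a constant multiple of) $\lVert p\rVert_{g_0^\ast}^2$. Tracking the powers of $C$ carefully — they will be some explicit polynomial expression in $n$, say of the form $C^{2n+2}$ or similar — yields the claimed two-sided bound $C^{-K}\le \lVert p\rVert_{\sigma^F}/\lVert p\rVert_{g_0^\ast} \le C^K$ with $K$ depending only on $C$ (through the exponent bookkeeping, not its value) and $n$. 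One should also use $C^{-n}\le \mu \le C^n$ from \eqref{eq_control_volume} if the normalization of $\alpha^F$ versus $\Omega^F$ introduces a $\mu$ factor.

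The main obstacle, and the step requiring real care, is establishing the explicit pointwise formula for $\sigma^F$ in the cotangent picture — i.e., checking that $\lVert p\rVert_{\sigma^F}^2$ really is $\frac{n}{\voleucl(\S^{n-1})}\int_{H_x^\ast M}\langle p,\cdot\rangle^2\,$(something involving only $F^\ast$ and $\beta^{g_0}$), with the Lie-derivative structure collapsing correctly. This is where one must understand how $L_X$ acts on $\pi^\ast f$ at a point, relate it to the Hilbert form $A^F$ paired with the direction, and then transport everything through $\ell_F$ to $H^\ast M$ where Theorem \ref{thm_uniquely_contact} makes $F$-dependence transparent. Once that formula is in hand the comparison with $g_0^\ast$ is a routine application of the inequalities in Lemma \ref{lem_control_of_m_mu}. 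I would therefore spend the bulk of the argument on the symbol computation and treat the final pinching estimate as a short corollary of \eqref{eq_dual_control} and \eqref{eq_control_volume}.
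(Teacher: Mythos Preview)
Your plan contains a genuine gap at exactly the point you flag as ``the main obstacle''. You assert that after passing to the cotangent picture ``no derivatives of $F$ survive in this expression'', and that therefore the lower bound will follow by routine pinching via \eqref{eq_dual_control} and \eqref{eq_control_volume}. This is not so. When you carry out the computation, the integrand on $H_x^\ast M$ still contains the factor $\bigl(L_X\pi^\ast\phi\bigr)\circ\ell_F^{-1}$, which at $[q]\in H_x^\ast M$ equals $p(v)$ where $v$ is the $F$-unit vector with $\ell_F([v])=[q]$. The map $\ell_F^{-1}$ (equivalently $\mathcal L_{F^\ast}$) is built from \emph{fiber derivatives} of $F$ (or of $F^\ast$), and the bi-Lipschitz hypothesis gives no control on these. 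Concretely, after bounding the scalar factors $m$, $\mu$ and $(F_0^\ast/F^\ast)^n$ by powers of $C$ as you propose, what remains is
\[
\int_{H_xM}\bigl(L_{X_0}\pi^\ast\phi\circ\ell_F^{-1}\circ\ell_{F_0}\bigr)^2\,\alpha^{F_0},
\]
and this is \emph{not} a constant multiple of $\lVert p\rVert_{g_0^\ast}^2$: the unknown diffeomorphism $\ell_F^{-1}\circ\ell_{F_0}$ of the fiber could in principle concentrate the region where $L_{X_0}\pi^\ast\phi=\cos\theta_{n-1}$ is large into a set of arbitrarily small $\alpha^{F_0}$-measure, driving the integral to zero.

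The paper's proof proceeds exactly as you outline up to this point (so your upper bound is fine --- it follows from $|L_{X_0}\pi^\ast\phi|\le 1$), but for the lower bound it does \emph{not} attempt to make the $F$-dependence disappear. Instead it proves a convex-geometry Claim: if $\cos\theta_{n-1}\ge\cos\eps$ for a suitable $\eps=\eps(C)$, then $\bigl|\cos\bigl(\ell_F^{-1}\circ\ell_{F_0}(\theta_{n-1})\bigr)\bigr|\ge \tfrac{1}{2}C^{-2}\cos\eps$. The argument uses only that the unit ball of $F$ is convex and pinched between the Euclidean spheres of radii $C^{-1}$ and $C$: the tangent hyperplane to the $F$-ball in a direction close to $p^\sharp$ cannot tilt too far without either cutting into $S(C^{-1})$ or forcing the tangency point outside $S(C)$. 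This geometric control on how far the Legendre transform can move directions is the missing idea in your proposal; without it (or some substitute), the lower bound does not follow from Lemma~\ref{lem_control_of_m_mu} alone.
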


\begin{proof}
Let $p \in T_x^{\ast}M$ be fixed. We suppose that $\lVert p\rVert_{g_{0}^{\ast}} = 1$. Let $\phi \colon M \rightarrow \R$ be a smooth function such that $\phi(x) = 0$ and $d\phi_x = p$. Then, the norm of $p$ for the symbol metric is
\begin{equation*}
 \lVert p\rVert_{\sigma^F} = \frac{n}{\voleucl\left(\S^{n-1} \right)} \int_{H_xM} \left(L_X \pi^{\ast} \phi \right)^2 \alpha^F.
\end{equation*}
Let us write $c_n:= n \left(\voleucl\left(\S^{n-1}\right) \right)^{-1}$, and from now on, we will write $F_0:= \sqrt{g_0}$. Let $X_0$ and $X$ be the geodesic vector fields associated with $F_0$ and $F$ respectively. There exists $m \colon M \rightarrow \R$ and $Y \colon HM \rightarrow VHM$ such that $X = m X_0 + Y$, so, using Lemma \ref{lem_alpha_and_omega} and the change of variable formula, we get
\begin{align*}
 \lVert p\rVert_{\sigma^F} &= c_n \int_{H_xM} m^2 \left(L_{X_0} \pi^{\ast} \phi \right)^2 \left(\ell_{F_0}^{-1} \circ \ell_F \right)^{\ast} \left[ \mu^{-1} \left(\frac{F_0^{\ast}}{F^{\ast}} \circ \ell_{F_0}^{-1} \right)^n \alpha^{F_0} \right] \\
    &= c_n \int_{H_xM} \left(m\circ \ell_{F}^{-1} \circ \ell_{F_0} \right)^2 \left(L_{X_0} \pi^{\ast} \phi \circ \ell_{F}^{-1} \circ \ell_{F_0} \right)^2  \mu^{-1} \left(\frac{F_0^{\ast}}{F^{\ast}} \circ \ell_{F_0}^{-1} \right)^n \alpha^{F_0}.
\end{align*}
Now, using Lemma \ref{lem_control_of_m_mu}, we have that
\begin{align*}
 \lVert p\rVert_{\sigma^F} &\leq c_n C^{2n +2} \int_{H_xM} \left(L_{X_0} \pi^{\ast} \phi \circ \ell_{F}^{-1} \circ \ell_{F_0} \right)^2 \alpha^{F_0},\\
 \lVert p\rVert_{\sigma^F} &\geq c_n C^{-2n -2} \int_{H_xM} \left(L_{X_0} \pi^{\ast} \phi \circ \ell_{F}^{-1} \circ \ell_{F_0} \right)^2 \alpha^{F_0}.
\end{align*}

So our goal is to obtain a control of $\int_{H_xM} \left(L_{X_0} \pi^{\ast} \phi \circ \ell_{F}^{-1} \circ \ell_{F_0} \right)^2 \alpha^{F_0}$, depending on $C$ and $n$.

Finding an upper bound is easy. Indeed, by definition of $\phi$, for any $(x,\xi) \in H_xM$, $L_{X_0}\pi^{\ast} \phi (x, \xi) = p(v)$ where $v$ is a representative of $\xi$ such that $F_0(x,v)=1$. As we supposed that $\lVert p \rVert_{g_0^{\ast}} = \sup \lbrace p(u) \mid u\in T_xM, F_0(x,u)=1 \rbrace$, we have, for any $(x,\xi) \in H_xM$, $\left| L_{X_0}\pi^{\ast} \phi (x, \xi) \right| \leq 1$. Hence,
\begin{equation*}
 \lVert p\rVert_{\sigma^F} \leq c_n C^{2n +2} \int_{H_xM} \alpha^{F_0} = n C^{2n +2}.
\end{equation*}

The goal now is to show that, for any Finsler metric $F$ bi-Lipschitz equivalent to $F_0 = \sqrt{g_0}$, the integral $\int_{H_xM} \left(L_{X_0} \pi^{\ast} \phi \circ \ell_{F}^{-1} \circ \ell_{F_0} \right)^2 \alpha^{F_0}$ cannot be too small. To achieve that, we will reduce this problem to a problem of convex geometry in $\R^n$.

Let $(x_1,\dots,x_n)$ be normal coordinates for $g_0$ at $x$ such that $p = dx_n$. Let $(v_1, \dots, v_n)$ be the associated coordinates on $T_xM$ and $(\theta_1, \dots, \theta_{n-1})$ the spherical coordinates on $H_xM$ given by
\begin{align*}
 \sin \theta_{n-1} \dots \sin \theta_{2} \cos \theta_1 &= \frac{v_1}{\sqrt{\sum v_i^2}} \\
 \sin \theta_{n-1} \dots \sin \theta_{2} \sin \theta_1 &= \frac{v_2}{\sqrt{\sum v_i^2}} \\
  &\vdots \\
 \cos \theta_{n-1} &= \frac{v_n}{\sqrt{\sum v_i^2}}.
\end{align*}

By our choice of coordinates, we have that
\begin{equation*}
 L_{X_0}\pi^{\ast} \phi = \cos \theta_{n-1}.
\end{equation*}
Let us abuse notations and write $\ell_{F}^{-1} \circ \ell_{F_0}(\theta_i)$ for the $\theta_i$ coordinate of $\ell_{F}^{-1} \circ \ell_{F_0}(\theta_1, \dots, \theta_{n-1})$.

\begin{claim}
For $\eps>0$ small enough, i.e., such that $\sin^2 \eps \leq 2 C^6 - \sqrt{4 C^{12}-1}$,  and for any Finsler metric $F$ which is $C$-bi-Lipschitz equivalent to $F_0=\sqrt{g_0}$, if $(\theta_1, \dots, \theta_{n-1})\in H_xM$ is such that
\begin{equation*}
 \cos \theta_{n-1} \geq \cos \eps,
\end{equation*}
 then
\begin{equation*}
 \left| \cos\left( \ell_{F}^{-1} \circ \ell_{F_0}(\theta_{n-1}) \right)\right| \geq \frac{C^{-2} \cos \eps}{2}.
\end{equation*}
\end{claim}

\begin{proof}
To prove the Claim, we need to describe the Legendre transform. The function $\ell_{F}^{-1} \circ \ell_{F_0}$ takes a point $\xi \in H_xM$ to a point $\xi'\in H_xM$ such that the tangent to the unit sphere of $F$ at $v'\in T_xM$, where $v'$ is a representative of $\xi'$, is parallel to the tangent to the unit sphere of $F_0$ at $v\in T_xM$, where $v$ is a representative of $\xi$.

Let $\eps>0$ and $(\theta_1, \dots, \theta_{n-1})\in H_xM$ is such that $\cos \theta_{n-1} \geq \cos \eps$.
Let $\mathcal{P} \subset T_xM$ be the plan through $(\theta_1, \dots, \theta_{n-1})$ and the $v_n$-axis. Let $\mathcal{H}$ be the hyperplane in $T_xM$ tangent at $(\theta_1, \dots, \theta_{n-1})$ to the (Euclidean) sphere $S(C^{-1})$ of center $0$ and radius $C^{-1}$. Let $\mathcal{H}_0$ be the hyperplane in $T_xM$ determined by the equation $v_n = \left(C^{-1}/2 \right) \cos \eps$. Finally, let $X$ be the distance between $0$ and $A:= \mathcal{H}_0 \cap \mathcal{H} \cap \mathcal{P}$, and $\psi$ the angle between the $v_n$-axis and the line $(0A)$ (see Figure \ref{fig_dessins_pr_preuve}).

\begin{figure}[h!]
 \centering
  \subfloat{\label{fig_dessin_1} \scalebox{1} { \begin{pspicture}(-0.5,-1.5)(5.5,7)
%
%
\rput(-0.2,-0.2){$0$}
\psarc[linewidth=0.04,linestyle=dashed](0,0){4}{-10}{90}
\rput(3.8,-1){$S\left(C^{-1} \right)$}
\psline[linewidth=0.04]{->}(0,0)(6,0)
\rput(6.2,-0.2){$v_n$}
\psline[linewidth=0.04](0,0)(4.531538935,2.113091309)
\psarc[linewidth=0.04]{->}(0,0){1}{0}{25}
\rput(1.4,0.2){$\eps$}
\psline[linewidth=0.04](1.812615574,-0.9)(1.812615574,7)
\put(1.9,-1){$\mathcal{H}_0 \cap \mathcal{P}$}
\psline[linewidth=0.04](5,-1.25772827)(1.5,6.248045952)
\put(5,-1.2){$\mathcal{H}\cap \mathcal{P}$}
\psline[linewidth=0.04](0,0)(1.812615574,5.57763969)
\psarc[linewidth=0.04, linecolor=red]{->}(0,0){0.8}{0}{71}
\rput(0.8,0.7){$\color{red}\psi$}
\rput(1,-0.4){$\frac{\cos \eps}{2C}$}
\rput(0.6, 2.6){$X$}
\put(3.9,1.6){$\left(  \theta_1, \dots, \theta_{n-2}, \eps\right)$}
\rput(2,5.7){$A$}
\end{pspicture} }} \quad
  \subfloat{\label{fig_dessin_2} \scalebox{1} { \begin{pspicture}(0,-4.5)(9,4.5)
%
\pscircle[linewidth=0.04,dimen=outer](3.6,-0.85){1.2}
\pscircle[linewidth=0.04,dimen=outer](3.6,-0.85){3.6}
\psbezier[linewidth=0.04,linecolor=red](2.98,2.03)(3.3769023,2.9241416)(3.6,2.75)(4.0,1.95)(4.4,1.15)(4.9,-0.13)(4.98,-1.23)(5.06,-2.33)(4.74,-3.11)(4.44,-3.57)(4.14,-4.03)(3.6,-4.45)(3.2,-4.05)(2.8,-3.65)(1.9356114,-2.2418473)(2.0,-1.25)(2.0643885,-0.25815263)(2.5830975,1.1358583)(2.98,2.03)
\psline[linewidth=0.04cm,linestyle=dashed,arrowsize=0.053cm 2.0,arrowlength=1.4,arrowinset=0.4]{->}(3.6,-0.85)(9.2,-0.85)
\psline[linewidth=0.04cm](2.78,4.45)(4.94,0.03)
\psline[linewidth=0.04cm](5.56,3.33)(7.72,-1.09)
\psline[linewidth=0.04cm,linestyle=dashed](3.6,-0.85)(8.06,1.41)
\psline[linewidth=0.04cm,linestyle=dashed](3.6,-0.85)(4.16,3.47)
\psarc[linewidth=0.04cm]{->}(3.6, -0.85){2}{0}{27}
\psarc[linewidth=0.04cm,linecolor=red]{->}(3.6, -0.85){0.7}{0}{82}
\rput(4.3,-0.2){$\color{red}\psi$}
\rput(5.8,-0.4){$\eps$}
\rput(3.6,-2.3){$S(C^{-1})$}
\rput(7,-3){$S(C)$}
\rput(5.4,-2.8){$\color{red}B^F(1)$}
\rput(9.3,-1.1){$v_n$}
\put(6.8,1.5){$(\theta_1, \dots , \theta_{n-2}, \eps)$}
\put(3.4,3.7){$\ell_{F}^{-1} \circ \ell_{F_0}(\theta_1, \dots ,\theta_{n-2}, \eps)$}
\end{pspicture} }}
 \caption{} \label{fig_dessins_pr_preuve}
\end{figure}

If $F$ is a Finsler metric $C$-bi-Lipschitz equivalent to $F_0=\sqrt{g_0}$, then the unit ball of $F$ needs to contain the Euclidean sphere $S(C^{-1})$. Let $B^F(1) \subset T_xM$ be the unit sphere for $F$. By convexity of $B^F(1)$, the tangent hyperplane $\mathcal{H}_F$ to $B^F(1)$ at $\ell_{F}^{-1} \circ \ell_{F_0} (\theta_1, \dots, \theta_{n-1})$ needs to intersect $\mathcal{P}$ above $\mathcal{H}_0 \cap \mathcal{P}$. Otherwise, $B^F(1)$ would intersect $S(C^{-1})$.

Furthermore, as $B^F(1)$ needs to be contained in $S(C)$, the Euclidean sphere of center $0$ and radius $C$, if $X \geq C$, then the orthogonal projection of $\ell_{F}^{-1} \circ \ell_{F_0} (\theta_1, \dots, \theta_{n-1})$ on $\mathcal{H}_F \cap \mathcal{P}$ needs to be below the line $(0A)$. Hence, if $X \geq C$, we have
\begin{equation*}
 \left| \cos\left( \ell_{F}^{-1} \circ \ell_{F_0}(\theta_{n-1}) \right) \right| \geq \cos \psi.
\end{equation*}

Now, 
\begin{equation*}
 X^2 = \left(\frac{C^{-1}}{2 \sin \eps}\right)^2 + \left(\frac{C^{-1}\sin \eps}{2 }\right)^2 = \frac{C^{-2}}{4}\left(\sin^{-2}\eps + \sin^{2}\eps \right).
\end{equation*}
So, if we set $\eps$ such that 
\begin{equation*}
 \sin^2 \eps = 2 C^6 - \sqrt{4 C^{12}-1},
\end{equation*}
then $X = C$. As $\cos \psi = \cos \eps/ (2CX) = C^{-2}\cos \eps / 2$, we obtain
\begin{equation*}
 \left| \cos \left( \ell_{F}^{-1} \circ \ell_{F_0}(\theta_{n-1}) \right) \right| \geq \frac{C^{-2} \cos \eps}{2}\, . \qedhere
\end{equation*}
\end{proof}

Now that we proved the claim, the Proposition follows easily. Let $\eps>0$ be chosen as in the Claim, and
\[
U(\eps) := \lbrace (\theta_1, \dots, \theta_{n-1})  \mid  \cos \theta_{n-1} \geq \cos \eps \rbrace.
\]
We have
\begin{align*}
  \lVert p\rVert_{\sigma^F} &\geq c_n C^{-2n -2} \int_{H_xM} \left(L_{X_0} \pi^{\ast} \phi \circ \ell_{F}^{-1} \circ \ell_{F_0} \right)^2 \alpha^{F_0} \\
   &\geq c_n C^{-2n -2} \int_{U(\eps)}  \left( \cos\left( \ell_{F}^{-1} \circ \ell_{F_0}(\theta_{n-1}) \right) \right)^2 \alpha^{F_0} \\
   &\geq c_n C^{-2n -2}  \frac{C^{-4} \cos^2 \eps}{4} \int_{U(\eps)} \alpha^{F_0},
\end{align*}
and as $\eps$ depends only on $C$, $\int_{U(\eps)} \alpha^{F_0}$ depends only on $C$ and $n$. Therefore, there exists a constant $K(C,n)$ such that
\begin{equation*}
 \lVert p\rVert_{\sigma^F} \geq C^{-K(C,n)}.
\end{equation*}
This finishes the proof of the Proposition.
\end{proof}

In order to prove Theorem \ref{thm_energy_control}, we use the following result
\begin{thm}[Matveev, Troyanov \cite{MatveevTroyanov}] \label{thm_Binet_Legendre}
 Let $F$ be a $c$-quasireversible Finsler metric. There exists a Riemannian metric $g_F$, called the associated Binet--Legendre metric, with the following properties:
\begin{enumerate}
 \item The metric $g_F$ is as smooth as $F$;
 \item The metrics $F$ and $g_F$ are bilipschitz-equivalent. More precisely, if $n$ is the dimension of $M$, then
  \begin{equation*}
   (c\sqrt{2 n})^{-n-1} \sqrt{g_F} \leq F \leq (c\sqrt{2 n})^{n+1} \sqrt{g_F}.
  \end{equation*}
\end{enumerate}
\end{thm}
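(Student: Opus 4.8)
The plan is to recall the Binet--Legendre construction explicitly and then verify the two properties, reducing the bilipschitz estimate to John's ellipsoid theorem. For each $x\in M$ the dual unit ball $B^{\ast}_x:=\{\xi\in T^{\ast}_xM\mid F^{\ast}(x,\xi)\le 1\}$ is a convex body containing $0$ in its interior (it is a sublevel set of the convex, positively $1$-homogeneous, positive-definite function $F^{\ast}(x,\cdot)$, and it is bounded since $\{F(x,\cdot)\le 1\}$ spans $T_xM$). I would set
\[
 g_F(x)(v,w):=\frac{n+2}{\mathrm{vol}(B^{\ast}_x)}\int_{B^{\ast}_x}\xi(v)\,\xi(w)\,d\xi ,
\]
where $d\xi$ is any translation-invariant measure on $T^{\ast}_xM$ and $\mathrm{vol}(B^{\ast}_x)=\int_{B^{\ast}_x}d\xi$; the ratio does not depend on that choice since numerator and denominator scale together. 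To get property (1): positive-definiteness is immediate because $g_F(x)(v,v)=0$ forces $\xi(v)=0$ on the open set $B^{\ast}_x$, hence $v=0$; for smoothness in $x$ I would fix locally a smooth frame, write $B^{\ast}_x$ through its radial function $r_x(\omega)=1/F^{\ast}(x,\omega)$ on an auxiliary unit sphere, note that $F^{\ast}$ is $C^\infty$ off the zero section (it is the support function of the strongly convex smooth body $\{F(x,\cdot)\le 1\}$), and observe that in polar coordinates $\int_{B^{\ast}_x}\phi\,d\xi=\int\big(\int_0^{r_x(\omega)}\phi(s\omega)s^{n-1}ds\big)d\omega$, which for the quadratic integrand $\phi(\xi)=\xi(v)\xi(w)$ is a polynomial expression in $r_x(\omega)$, hence $C^\infty$ in $x$ --- the averaging is precisely what prevents any loss of regularity, so in general $g_F$ inherits the regularity of $F$.

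For the easy half of the bilipschitz estimate: for $\xi\in B^{\ast}_x$ and $v\ne 0$, the definition of the dual gives $\xi(v)\le F^{\ast}(x,\xi)F(x,v)\le F(x,v)$ and $-\xi(v)=\xi(-v)\le F^{\ast}(x,\xi)F(x,-v)\le cF(x,v)$ by $c$-quasireversibility, so $|\xi(v)|\le cF(x,v)$ for every $\xi\in B^{\ast}_x$. Integrating, $g_F(x)(v,v)\le (n+2)c^2F(x,v)^2$, i.e. $\sqrt{g_F}\le\sqrt{n+2}\,c\,F\le (c\sqrt{2n})^{n+1}F$.

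The heart of the matter is the reverse inequality, and here the main obstacle is that one averages over the possibly very asymmetric body $B^{\ast}_x$, whereas the comparison must be made with an inner product, i.e. with an ellipsoid \emph{centred at the origin}. I would repair this by symmetrising. First, $F^{\ast}$ is itself $c$-quasireversible: $F^{\ast}(x,-p)=\sup_{w\ne 0}p(w)/F(x,-w)\le c\,F^{\ast}(x,p)$ because $F(x,-w)\ge F(x,w)/c$. Hence the symmetric body $\widetilde B:=B^{\ast}_x\cap(-B^{\ast}_x)$ satisfies $\widetilde B\subseteq B^{\ast}_x\subseteq c\widetilde B$. Let $\mathcal E\subseteq T^{\ast}_xM$ be the John ellipsoid of $\widetilde B$, so $\mathcal E\subseteq\widetilde B\subseteq\sqrt n\,\mathcal E$, and therefore
\[
 \mathcal E\subseteq B^{\ast}_x\subseteq\rho\,\mathcal E,\qquad \rho:=c\sqrt n\le c\sqrt{2n},
\]
with $\mathcal E$ centred at $0$, so $\mathcal E=\{\xi\mid h(\xi,\xi)\le 1\}$ for an inner product $h$ on $T^{\ast}_xM$. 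Two estimates follow. From $B^{\ast}_x\subseteq\rho\mathcal E$ we get $\mathrm{vol}(B^{\ast}_x)\le\rho^n\mathrm{vol}(\mathcal E)$ and $\int_{B^{\ast}_x}\xi(v)^2 d\xi\ge\int_{\mathcal E}\xi(v)^2 d\xi$, so, evaluating the latter in $h$-orthonormal coordinates, $g_F(x)(v,v)\ge\rho^{-n}h^{-1}(v,v)$, where $h^{-1}$ is the inner product on $T_xM$ dual to $h$. Taking polar duals inside $T_xM$ of $\mathcal E\subseteq B^{\ast}_x\subseteq\rho\mathcal E$, and using $(B^{\ast}_x)^{\circ}=\{v\mid F(x,v)\le 1\}$ (biduality $F^{\ast\ast}=F$, valid since $F(x,\cdot)$ is convex) together with $\mathcal E^{\circ}=\{v\mid h^{-1}(v,v)\le 1\}$, gives $F(x,v)\le\rho\sqrt{h^{-1}(v,v)}$. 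Combining, $F(x,v)\le\rho\sqrt{h^{-1}(v,v)}\le\rho\cdot\rho^{n/2}\sqrt{g_F(x)(v,v)}=\rho^{n/2+1}\sqrt{g_F(x)(v,v)}\le (c\sqrt{2n})^{n+1}\sqrt{g_F(x)(v,v)}$, which is the remaining half of property (2). Apart from this symmetrisation step --- the only place where quasireversibility is genuinely needed for the lower bound, and which accounts for the factor $c$ --- everything else is bookkeeping with homogeneous functions, polar duality, and the change-of-variables formula.
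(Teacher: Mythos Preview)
The paper does not give its own proof of this theorem: it is quoted from Matveev--Troyanov and used as a black box in the proof of Theorem~\ref{thm_energy_control}. So there is no ``paper's proof'' to compare against.

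That said, your argument is correct and is essentially the proof in the cited reference. The construction via second moments of the dual unit ball, the smoothness argument through the radial function, the upper bound $\sqrt{g_F}\le c\sqrt{n+2}\,F$ from the trivial estimate $|\xi(v)|\le cF(x,v)$ on $B^{\ast}_x$, and the lower bound via symmetrisation $\widetilde B=B^{\ast}_x\cap(-B^{\ast}_x)$ followed by John's theorem, are exactly the ingredients of the original proof. Your intermediate bound $F\le\rho^{1+n/2}\sqrt{g_F}$ with $\rho=c\sqrt{n}$ is in fact slightly sharper than the statement's $(c\sqrt{2n})^{n+1}$, but of course implies it. One small remark: when you write $(B^{\ast}_x)^{\circ}=\{v:F(x,v)\le1\}$, the biduality $F^{\ast\ast}=F$ you invoke holds precisely because $F(x,\cdot)$ is convex, positively homogeneous and lower semicontinuous; for a Finsler metric in the sense of Definition~\ref{def:finsler_metric} this is automatic, so the step is justified.
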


\begin{proof}[Proof of Theorem \ref{thm_energy_control}]
 Let $g_F$ and $g_{F_0}$ be the Binet--Legendre metrics associated with $F$ and $F_0$. Let $\sigma_F$ and $\sigma_{F_0}$ be the symbols of $\Delta^F$ and $\Delta^{F_0}$. We can write
\begin{equation*}
 \frac{\sigma_F}{\sigma_{F_0}}= \frac{\sigma_F}{g_F^{\ast}} \frac{g_F^{\ast}}{\left(F^{\ast}\right)^2} \frac{\left(F^{\ast}\right)^2}{\left(F_0^{\ast}\right)^2} \frac{\left(F_0^{\ast}\right)^2}{g_{F_0}^{\ast}} \frac{g_{F_0}^{\ast}}{\sigma_{F_0}} \, .
\end{equation*}
Now, if $C_1$ and $C_2$ are the quasireversibility constants of $F$ and $F_0$ respectively, then, by Proposition \ref{prop_symbol_control} and Theorem \ref{thm_Binet_Legendre}, there exists $K_1 = K_1(C_1,n)$ and $K_2 = K_2(C_2,n)$ such that:
\begin{align*}
(C_1\sqrt{2 n})^{-K_1} &\leq \frac{\sigma_F}{g_F^{\ast}} \leq (C_1\sqrt{2 n})^{K_1} \\
(C_2\sqrt{2 n})^{-K_2} &\leq \frac{\sigma_{F_0}}{g_{F_0}^{\ast}} \leq (C_2\sqrt{2 n})^{K_2}.
\end{align*}
So,
\begin{equation*}
 (C_1\sqrt{2 n})^{-K_1-2n-2} (C_2\sqrt{2 n})^{-K_2-2n-2} C^{-2} \leq  \frac{\sigma_F}{\sigma_{F_0}} \leq C^2 (C_1\sqrt{2 n})^{K_1+2n+2} (C_2\sqrt{2 n})^{K_2+2n+2}.
\end{equation*}

From the above equation together with Equation \eqref{eq_control_volume}, we immediately deduce the Theorem \ref{thm_energy_control}. Indeed,
\begin{align*}
 E^F(f) &= c_n \int_{x\in M} \left(\int_{H_xM} \left( L_X\pi^{\ast} f \right)^2 \alpha^F \right) \Omega^F \\
    &= c_n \int_{x\in M} \lVert df \rVert^2_{\sigma_F} \Omega^F \\
    &\leq c_n C^2 (C_1\sqrt{2 n})^{K_1+2n+2} (C_2\sqrt{2 n})^{K_2+2n+2} C^n \int_{x\in M} \lVert df \rVert^2_{\sigma_{F_0}} \Omega^{F_0} \\
    &\leq C^{n+2} (C_1\sqrt{2 n})^{K_1+2n+2} (C_2\sqrt{2 n})^{K_2+2n+2} E^{F_0}(f),
\end{align*}
and similarly, we have
\begin{equation*}
 E^F(f) \geq C^{-n-2} (C_1\sqrt{2 n})^{-K_1+2n+2} (C_2\sqrt{2 n})^{-K_2+2n+2} E^{F_0}(f). \qedhere
\end{equation*}
\end{proof}

\section{Applications} \label{sec_applications}

In this section, we apply Theorem \ref{thm_energy_control} to some of what is known in the Riemannian context using the Binet--Legendre metric. The moral of this section being that any coarse control of the spectrum for a class of Riemannian metric immediately gives a coarse control of the Finsler spectrum thanks to Theorem \ref{thm_energy_control}. We did not try to give an exhaustive list of such applications, but just concentrated on some famous Riemannian problems.

\subsection{Spectral control on surfaces}

\begin{thm}\label{thm_control_on_surfaces}
 Let $\Sigma$ be a surface of genus $\delta$ and $C_1 \geq 1$. There exists a constant $K \geq 1$, depending only on $C_1$, such that, for any $C_1$-quasireversible Finsler metric $F$ on $\Sigma$ and for any $k\in \N$,
\begin{equation*}
 \lambda_k(\Sigma,F) \left(\vol(\Sigma,F) \right) \leq (2C_1)^K (1+\delta) k.
\end{equation*}
\end{thm}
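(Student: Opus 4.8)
The plan is to reduce to the known Riemannian bound on surfaces via the Binet--Legendre metric. First I would invoke Theorem \ref{thm_Binet_Legendre} with $n=2$: since $\sqrt{2n}=2$, the Binet--Legendre metric $g_F$ of a $C_1$-quasireversible Finsler metric $F$ satisfies
\begin{equation*}
 (2C_1)^{-3}\sqrt{g_F} \leq F \leq (2C_1)^{3}\sqrt{g_F},
\end{equation*}
so $F$ and $F_0 := \sqrt{g_F}$ are $C$-bi-Lipschitz with $C = (2C_1)^{3}$. Being Riemannian, $F_0$ is reversible, hence its quasireversibility constant is $1$.

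Next I would apply Corollary \ref{cor_control_spectrum} to the pair $F$, $F_0$: there is $K' \geq 1$, depending on $C$, on the quasireversibility constants $C_1$ and $1$, and on $n=2$ --- hence only on $C_1$ --- such that $\lambda_k(\Sigma,F) \leq C^{K'}\lambda_k(\Sigma,F_0)$. Combined with the volume comparison of Lemma \ref{lem_control_of_m_mu}, namely $\mu \leq C^n = C^2$, which gives $\vol(\Sigma,F) = \int_\Sigma \mu\, \Omega^{F_0} \leq C^2\, \vol(\Sigma, g_F)$, one obtains
\begin{equation*}
 \lambda_k(\Sigma,F)\,\vol(\Sigma,F) \leq C^{K'+2}\,\lambda_k(\Sigma,g_F)\,\vol(\Sigma,g_F).
\end{equation*}

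Finally I would quote the classical Riemannian estimate on surfaces (Korevaar, with Li--Yau / Yang--Yau for $k=1$; see also Grigor'yan--Netrusov--Yau and Hassannezhad): there is a universal constant $C_0$ such that for every Riemannian metric $g$ on a closed surface of genus $\delta$ and every $k$, $\lambda_k(\Sigma,g)\,\vol(\Sigma,g) \leq C_0(1+\delta)k$. Inserting this bound for $g = g_F$ yields $\lambda_k(\Sigma,F)\,\vol(\Sigma,F) \leq C_0\, C^{K'+2}(1+\delta)k$. Since $C = (2C_1)^3$ and $2C_1 \geq 2$, we may absorb $C_0$ and the exponents into a single constant: taking $K := 3(K'+2) + \lceil \log_2 C_0 \rceil$ gives $C_0 C^{K'+2} \leq (2C_1)^{K}$, and $K$ depends only on $C_1$, which is the claimed bound.

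The argument is essentially routine; the only point requiring care is the bookkeeping of constants, in particular verifying that the constant $K'$ produced by Corollary \ref{cor_control_spectrum} genuinely depends only on $C_1$ once the comparison metric is chosen Riemannian (so that its quasireversibility constant equals $1$), and that the universal Riemannian constant $C_0$ can be swallowed into the final exponent. There is no analytic obstacle beyond invoking the Riemannian theorem as a black box.
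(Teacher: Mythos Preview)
Your proof is correct and follows essentially the same approach as the paper: reduce to the Binet--Legendre metric via Theorem~\ref{thm_Binet_Legendre}, apply Korevaar's Riemannian bound, and transfer back using Corollary~\ref{cor_control_spectrum} together with the volume comparison of Lemma~\ref{lem_control_of_m_mu}. The only difference is cosmetic bookkeeping of the constants.
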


We will show in Section \ref{sec_big_eigenvalues} that the dependency on the quasireversibility constant is essential. Indeed, we construct some examples of Finsler metrics on surfaces, highly non-reversible, with arbitrarily large eigenvalues.

To prove this result, we just use the Binet-Legendre metric and what is known about the eigenvalues of Riemannian surfaces and apply Corollary \ref{cor_control_spectrum}.

\begin{proof}
Let $F$ be a $C_1$-quasireversible Finsler metric on $\Sigma$ and $g_{F}$ its Binet-Legendre metric. Korevaar \cite{Korevaar:Upper_bounds} proved that there exists a universal constant $c$, such that, for any $k$, the $k$-th eigenvalue of $g_{F}$ verifies:
\begin{equation*}
 \lambda_k(g_F) \left(\vol(\Sigma,g_F) \right) \leq c (1+\delta) k.
\end{equation*}
Now as $F$ and $g_F$ are bi-Lipschitz equivalent, by Corollary \ref{cor_control_spectrum}, there exists a constant $K\geq 1$, depending only on $C_1$, such that
\begin{equation*}
 (2C_1)^{-K} \leq \frac{\lambda_k(F)}{\lambda_k(g_F)} \leq (2C_1)^{K}.
\end{equation*}
And, by Lemma \ref{lem_control_of_m_mu},
\begin{equation*}
 (2C_1)^{-6} \leq \frac{\vol(\Sigma,F)}{\vol(\Sigma,g_F)} \leq (2C_1)^{6}.
\end{equation*}
So,
\begin{equation*}
 \lambda_k(F) \leq (2C_1)^K \lambda_k(g_F) \leq (2C_1)^K \frac{c(1+\delta)k}{\vol(\Sigma,g_F)} \leq (2 C_1)^{K+6} \frac{c (1+\delta)k}{\vol(\Sigma,F)} \, . \qedhere
\end{equation*}
\end{proof}

\subsection{Spectral control in a conformal class}

\begin{thm} \label{thm_conformal_control}
Let $M$ be an $n$-manifold and $F$ a Finsler metric on $M$. There exists a constant $C_n\left([F] \right)$, depending only on the dimension $n$ and the conformal class of $F$, such that, for any $k\in \N$, 
\begin{equation*}
 \lambda_k(M,F) \left(\vol\left(M,F \right) \right)^{2/n} \leq C_n\left( [F]\right) k^{2/n}.
\end{equation*}
\end{thm}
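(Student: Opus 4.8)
\textbf{Proof proposal for Theorem \ref{thm_conformal_control}.}

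The plan is to reduce to the Riemannian conformal spectral bound via the Binet--Legendre metric and Corollary \ref{cor_control_spectrum}, exactly in the spirit of the proof of Theorem \ref{thm_control_on_surfaces}. First I would let $g_F$ be the Binet--Legendre metric associated with $F$ and invoke the theorem of Korevaar (and its higher-dimensional extension by Korevaar, or the conformal upper bounds of Colbois--El Soufi / Hassannezhad, depending on which formulation one wants): there is a constant $C_n([g_F])$, depending only on $n$ and the conformal class $[g_F]$ of the Riemannian metric $g_F$, such that for all $k\in\N$,
\begin{equation*}
 \lambda_k(M,g_F)\left(\vol(M,g_F)\right)^{2/n} \leq C_n([g_F])\, k^{2/n}.
\end{equation*}
Then I would transfer this to $F$: since $F$ and $\sqrt{g_F}$ are $c$-bi-Lipschitz with $c = (C_1\sqrt{2n})^{n+1}$ by Theorem \ref{thm_Binet_Legendre}, Corollary \ref{cor_control_spectrum} gives a constant $K'$ (depending only on $C_1$ and $n$) with $\lambda_k(M,F) \leq c^{2K'}\lambda_k(M,g_F)$, and Lemma \ref{lem_control_of_m_mu} (Equation \eqref{eq_control_volume}) gives $\vol(M,F) \leq c^{2n}\vol(M,g_F)$. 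Multiplying and absorbing all the $C_1,n$-dependent factors into the constant yields the claimed inequality with $C_n([F]) := c^{2K' + 4}\, C_n([g_F])$.

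The one point that needs care — and which I expect to be the main (mild) obstacle — is the \emph{conformal invariance of the construction}, i.e., checking that $C_n([F])$ genuinely depends only on the conformal class $[F]$ of the Finsler metric and not on $F$ itself. For this I would argue that if $\tilde F = e^{u}F$ for some $u\in C^\infty(M)$ (pointwise conformal rescaling on $M$, the natural notion of Finsler conformal class), then its Binet--Legendre metric $g_{\tilde F}$ is conformal to $g_F$ — this follows from the homogeneity of the Binet--Legendre construction under pointwise scaling of $F$, since the measure on each fiber defining $g_F$ scales homogeneously — so $[g_{\tilde F}] = [g_F]$ and hence $C_n([g_{\tilde F}]) = C_n([g_F])$. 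One must also note that the bi-Lipschitz constant $c$ and the quasireversibility constant $C_1$ are conformal invariants: the quasireversibility constant is scale-free by its very definition as a ratio of norms in the same fiber, so $C_1$ only depends on $[F]$, and therefore so does $c$. Putting these observations together shows the final constant depends only on $n$ and $[F]$, as asserted.

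If instead one wants the fully self-contained route avoiding citing the higher-dimensional Riemannian conformal bound as a black box, one can substitute the Colbois--El Soufi type estimate $\lambda_k(M,g)\vol(M,g)^{2/n} \le A_n([g])k^{2/n}$ for Riemannian $g$, which is exactly of the required form; the rest of the argument is unchanged. In either case no new Finsler-geometric input beyond Theorem \ref{thm_energy_control}, Corollary \ref{cor_control_spectrum}, Lemma \ref{lem_control_of_m_mu} and Theorem \ref{thm_Binet_Legendre} is needed, so the proof is short modulo the conformal-invariance bookkeeping described above.
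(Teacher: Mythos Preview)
Your proposal is correct and follows essentially the same route as the paper: pass to the Binet--Legendre metric, apply Korevaar's Riemannian conformal bound, transfer via Corollary~\ref{cor_control_spectrum}, and observe that the Binet--Legendre construction is conformally natural (the paper cites \cite{MatveevTroyanov} for this last point). Your extra remark that the quasireversibility constant is itself conformally invariant is a detail the paper leaves implicit.
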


\begin{proof}
Let $F$ be a Finsler metric and $g_F$ its associated Binet--Legendre metric. By Korevaar's Theorem \cite{Korevaar:Upper_bounds}, there exists a constant $C_n \left([g_F] \right)$, depending only on $n$ and the conformal class of $g_F$, such that 
\begin{equation*}
 \lambda_k(M,g_F) \left(\vol\left(M,g_F \right) \right)^{2/n} \leq C_n\left( [g_F]\right) k^{2/n}.
\end{equation*}
Using Corollary \ref{cor_control_spectrum} we obtain that for some constant $K$, depending only on $n$ and the quasireversibility constant of $F$, we have
\begin{equation*}
 \lambda_k(M,F) \left(\vol\left(M,F \right) \right)^{2/n} \leq K C_n\left( [g_F]\right) k^{2/n}.
\end{equation*}
Now, when $F_1$ and $F_2$ are in the same conformal class, then $g_{F_1}$ and $g_{F_2}$ are also in the same conformal class (see \cite{MatveevTroyanov}), so the constant $K C_n\left([g_F]\right)$ depends on $n$ and the conformal class of $F$.
\end{proof}

\subsection{Small eigenvalues in a tower of coverings}

Our last application gives a condition for when it is possible to construct small eigenvalues on coverings of a given Finsler manifold. This is a direct application of a Theorem of T.\ Mantuano \cite{Mantuano:discretization_apply_to_spectrum}, which itself generalized a result by R.\ Brooks \cite{Brooks:spectral_tower_coverings}.

\begin{thm} \label{thm_spectrum_covering}
 Let $M$ be an $n$-manifold and $F$ a Finsler metric on $M$. Let $\{M_i\}_{i\geq 1}$ be a family of finite-sheeted covering spaces of $M$ with their induced Finsler metric. Let $\Gamma_i$ be the Schreier graph of the subgroup $\pi_1(M_i)$ of $\pi_1(M)$. Then, there exists a constant $C \geq 1$ depending on $n$ and $F$ such that, for all $k < \left|\Gamma_i \right| $
\begin{equation*}
 C^{-1} \lambda_k \left( \Gamma_i \right)  \leq \lambda_k\left(M_i \right) \leq C \lambda_k \left( \Gamma_i \right).
\end{equation*}
In particular, for all $k$
\begin{equation*}
 \lambda_k\left(M_i \right) \rightarrow 0 \; \text{when } i\rightarrow +\infty \Leftrightarrow \lambda_k \left( \Gamma_i \right) \rightarrow 0 \; \text{when } i\rightarrow +\infty .
\end{equation*}
\end{thm}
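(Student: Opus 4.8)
The statement reduces to combining the bi-Lipschitz control of the Finsler spectrum (Corollary \ref{cor_control_spectrum}) with an already-existing Riemannian discretization theorem. The strategy is: pass from each covering $M_i$ with its induced Finsler metric to the covering $(M_i, g_{F_i})$ equipped with the Binet--Legendre metric of the induced Finsler metric, and then invoke Mantuano's comparison between a Riemannian manifold and its discretization by a net.

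First I would observe that the induced Finsler metric on $M_i$ is the pull-back $\pi_i^{\ast} F$ under the covering map $\pi_i \colon M_i \to M$, and that its quasireversibility constant equals that of $F$ (quasireversibility is a fibrewise condition, unchanged by a local diffeomorphism). Hence, by Theorem \ref{thm_Binet_Legendre}, each $(M_i, \pi_i^{\ast}F)$ is $c$-bi-Lipschitz to its Binet--Legendre metric $g_{F_i}$ with $c = (C_1\sqrt{2n})^{n+1}$ depending only on $n$ and on the quasireversibility constant of $F$ — crucially \emph{uniform in $i$}. Moreover, since the Binet--Legendre construction is natural under local isometries, $g_{F_i}$ is exactly the pull-back $\pi_i^{\ast} g_F$ of the Binet--Legendre metric of $F$; in particular all the $(M_i, g_{F_i})$ are coverings of the \emph{fixed} closed Riemannian manifold $(M, g_F)$, so they have a common lower bound on injectivity radius, a common upper Ricci bound, etc. Then Corollary \ref{cor_control_spectrum} gives a constant $K'$, depending only on $n$ and on the quasireversibility constant of $F$ (equivalently: on $n$ and $F$), such that for every $i$ and every $k$,
\begin{equation*}
 c^{-2K'}\leq \frac{\lambda_k(M_i,\pi_i^{\ast}F)}{\lambda_k(M_i,g_{F_i})} \leq c^{2K'}.
\end{equation*}

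Next I would apply Mantuano's theorem to the Riemannian side. Fix $\varepsilon>0$ small relative to the injectivity radius of $(M,g_F)$ and let $\Gamma_i$ be an $\varepsilon$-net graph in $M_i$; one checks that $\Gamma_i$ is quasi-isometric, with constants independent of $i$, to the Schreier graph of $\pi_1(M_i)$ in $\pi_1(M)$ (both are Cayley-type graphs of the same orbit, and the choice of net versus generating set changes the graph only up to a bounded quasi-isometry). Mantuano's result then yields a constant $C_0\geq 1$, depending only on $(M,g_F)$ and $\varepsilon$ — hence only on $n$ and $F$ — such that for all $k<|\Gamma_i|$,
\begin{equation*}
 C_0^{-1}\,\lambda_k(\Gamma_i)\leq \lambda_k(M_i,g_{F_i})\leq C_0\,\lambda_k(\Gamma_i).
\end{equation*}
Chaining this with the previous display and setting $C := C_0\, c^{2K'}$ gives the desired two-sided bound $C^{-1}\lambda_k(\Gamma_i)\leq \lambda_k(M_i)\leq C\lambda_k(\Gamma_i)$, with $C$ depending only on $n$ and $F$. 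The final equivalence for $i\to\infty$ is then immediate: the ratio $\lambda_k(M_i)/\lambda_k(\Gamma_i)$ stays pinched in $[C^{-1},C]$, so one sequence tends to $0$ iff the other does.

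The main obstacle is not any hard estimate but the bookkeeping needed to make every constant \emph{uniform in the tower}: one must verify that the Binet--Legendre metric of the induced Finsler metric is the induced Binet--Legendre metric (so that all $(M_i,g_{F_i})$ genuinely cover one fixed manifold and Mantuano's hypotheses hold with $i$-independent constants), and that replacing an $\varepsilon$-net graph by the Schreier graph only perturbs the spectrum by a bounded factor. Both are standard — the first follows from the pointwise, coordinate-free definition of the Binet--Legendre metric in \cite{MatveevTroyanov}, the second from the bi-Lipschitz invariance of the graph Laplacian spectrum under quasi-isometries with controlled constants — but they are the only places where care is required.
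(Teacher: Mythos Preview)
Your proposal is correct and follows essentially the same route as the paper: the paper's proof is the single sentence ``It suffices to apply Theorem I.4.1 of \cite{Mantuano:discretization_apply_to_spectrum} to the Binet--Legendre metric associated to $F$,'' and you have simply unpacked the implicit uniformity checks (naturality of the Binet--Legendre metric under coverings, uniform bi-Lipschitz constant, hence uniform application of Corollary \ref{cor_control_spectrum}) that make this one-line reduction legitimate. The only difference is that you route through an $\varepsilon$-net graph before reaching the Schreier graph, whereas Mantuano's theorem as cited already delivers the Schreier-graph comparison directly; this extra step is harmless but unnecessary.
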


\begin{proof}
 It suffices to apply Theorem I.4.1 of \cite{Mantuano:discretization_apply_to_spectrum} to the Binet--Legendre metric associated to $F$.
\end{proof}

Among the corollaries of this result, we have that (see \cite{Brooks:spectral_tower_coverings}):
\begin{itemize}
 \item If $\pi_1(M)$ is infinite, amenable and residually finite, then there exists a sequence of finite coverings $\{M_i\}_{i\geq 1}$ such that $\lambda_k\left(M_i \right) \rightarrow 0 $;
 \item If $\pi_1(M)$ has Kazdhan's property (T), then there exists a constant $C>0$ such that $\lambda_1(M_i) > C$ for any covering $M_i$ of $M$.
\end{itemize}

\subsection{Remark about weighted Laplacians}

The Finsler--Laplacian is a \emph{weighted Laplacian} (see \cite{moi:natural_finsler_laplace}), i.e., if $F$ is a Finsler metric, $\sigma_F$ is the symbol of the Finsler--Laplacian $\Delta^F$ and $a \colon M \rightarrow \R^+$ is the function defined by $\Omega^F = a \Omega^{\sigma_F}$, where $\Omega^{\sigma_F}$ is the volume element of the Riemannian metric dual to $\sigma_F$, then
\begin{equation*}
 \Delta^{F}  = \Delta^{\sigma_F}  - \frac{1}{a} \langle \nabla \cdot , \nabla a \rangle.
\end{equation*}

We can see that Theorem \ref{thm_spectrum_covering} can be easily deduce from the characterization of the Finsler--Laplacian as a weighted Laplacian. Indeed, for any $f \in H^1(M)$,
\begin{equation*}
 E^F(f) = \int_M \lVert df \rVert_{\sigma_F}^2 \Omega^F \leq \sup_{x\in M} a(x) \int_M \lVert df \rVert_{\sigma_F}^2 \Omega^{\sigma_F}.
\end{equation*}
So, by the Min-Max principle,
\begin{equation*}
 \lambda_k(F) \leq \frac{\sup_{x\in M} a(x)}{\inf_{x\in M} a(x)} \lambda_k(\sigma_F) \leq C \lambda_k(\sigma_F),
\end{equation*}
where $C>0$ is a constant depending on $F$. and we obtain as easily that $\lambda_k(F) \geq C^{-1} \lambda_k(\sigma_F)$. Hence, applying Mantuano's result to $\sigma_F$ gives Theorem \ref{thm_spectrum_covering}.

Similarly, but with a tiny bit more work, the weighted Laplacian characterization could also be used to deduce Theorem \ref{thm_conformal_control}. The only thing that one needs to prove is the following:
\begin{lem}
 Let $F$ be a Finsler metric on a $n$-manifold $M$, $f \colon M \rightarrow \R$ a smooth function, and $F_f := e^f F$. Then, 
\begin{align*}
 \Omega^{F_f} &= e^{nf} \Omega^F \\
 \sigma_{F_f} &= e^{-2f} \sigma_{F}.
\end{align*}
\end{lem}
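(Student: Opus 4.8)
The plan is to follow each object entering the definitions of $\Omega^{F_f}$ and $\sigma_{F_f}$ through the conformal change, exploiting that the factor $e^{f(x)}$ is independent of the direction, so that the bundle $HM$, the projection $\pi$, the map $r$ and the Legendre projection $\ell_F$ are all unaffected.

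First I would record the effect on the Hilbert form: since $F_f(x,v) = e^{f(x)}F(x,v)$ for every $v\in T_xM$, the defining limit for $A$ gives at once $A^{F_f} = e^{\pi^{\ast}f}\, A^F$. Writing $\tilde f := \pi^{\ast}f$, Leibniz gives $dA^{F_f} = e^{\tilde f}\big(d\tilde f\wedge A^F + dA^F\big)$, and in expanding the $(n-1)$-st power all terms with two or more factors of $d\tilde f\wedge A^F$ vanish; wedging with $A^{F_f} = e^{\tilde f}A^F$ kills the one remaining correction term because it contains $A^F\wedge A^F$. Hence
\begin{equation*}
 A^{F_f}\wedge \big(dA^{F_f}\big)^{n-1} = e^{n\tilde f}\, A^F\wedge \big(dA^F\big)^{n-1} = \alpha^F\wedge \pi^{\ast}\!\big(e^{nf}\Omega^F\big),
\end{equation*}
using \eqref{eq:alpha_wedge_omega} and $e^{n\tilde f} = \pi^{\ast}(e^{nf})$. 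Since $\alpha^F$ is never zero on $VHM$ and $\int_{H_xM}\alpha^F = \voleucl(\S^{n-1})$, the pair $\big(e^{nf}\Omega^F,\alpha^F\big)$ meets every requirement characterizing $\big(\Omega^{F_f},\alpha^{F_f}\big)$ in Proposition \ref{prop:construction}; by the uniqueness there, $\Omega^{F_f} = e^{nf}\Omega^F$ and $\alpha^{F_f} = \alpha^F$.

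For the symbol, I would evaluate $\sigma_{F_f}$ on $p = d\phi_x\in T^{\ast}_xM$ via the formula from the proof of Proposition \ref{prop_symbol_control}: with $c_n = n/\voleucl(\S^{n-1})$ and $X_f$ the geodesic field of $F_f$,
\begin{equation*}
 \lVert p\rVert_{\sigma^{F_f}} = c_n\int_{H_xM}\big(L_{X_f}\pi^{\ast}\phi\big)^2\,\alpha^{F_f} = c_n\int_{H_xM}\big(L_{X_f}\pi^{\ast}\phi\big)^2\,\alpha^F,
\end{equation*}
using $\alpha^{F_f} = \alpha^F$. As recalled in that proof, $L_{X_f}\pi^{\ast}\phi(x,\xi) = p(v_f)$ with $v_f$ the representative of $\xi$ satisfying $F_f(x,v_f) = 1$; since $F_f = e^f F$ this means $v_f = e^{-f(x)}v$ with $v$ the $F$-unit representative, so $L_{X_f}\pi^{\ast}\phi = e^{-\tilde f}\,L_X\pi^{\ast}\phi$. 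Pulling the constant $e^{-2f(x)}$ out of the fibre integral yields $\lVert p\rVert_{\sigma^{F_f}} = e^{-2f(x)}\lVert p\rVert_{\sigma^F}$, i.e.\ $\sigma_{F_f} = e^{-2f}\sigma_F$.

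The only point needing care is the passage establishing $\Omega^{F_f}=e^{nf}\Omega^F$: one must check that all the exact-form corrections created by Leibniz disappear in the top-degree form, and, crucially, that the normalization $\int_{H_xM}\alpha^F = \voleucl(\S^{n-1})$ is already the correct one for $F_f$, which is what makes the uniqueness in Proposition \ref{prop:construction} applicable and forces $\alpha^{F_f} = \alpha^F$ rather than a rescaling. Everything else is direct substitution. Alternatively, the identity for $\Omega^{F_f}$ can be obtained from Lemma \ref{lem_alpha_and_omega}: since $F_f^{\ast} = e^{-f}F^{\ast}$ and $\L_{F_f} = e^{2f}\L_F$ scales by a positive factor and hence projects to the same map $\ell_{F_f} = \ell_F$ on $H^{\ast}M$, the function $\mu$ of that lemma equals $e^{nf}$ and $\alpha^{F_f} = \alpha^F$ follows from its second formula.
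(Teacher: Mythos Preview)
Your proof is correct. The paper itself does not give a proof of this lemma, treating it as an exercise and referring to \cite[Section 2.3]{moi:these}; your argument is exactly the natural one in the framework set up here --- tracking the Hilbert form through the conformal scaling to get $A^{F_f}\wedge(dA^{F_f})^{n-1}=e^{n\tilde f}A^F\wedge(dA^F)^{n-1}$, invoking the uniqueness in Proposition~\ref{prop:construction} to identify $\Omega^{F_f}$ and $\alpha^{F_f}$, and then reading off the symbol from the fibre integral --- so there is nothing to compare against beyond saying that you have written out the intended exercise in full.
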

With the definition of our Finsler--Laplace operator, the proof of this lemma is an exercise. It is done in \cite[Section 2.3]{moi:these}, although the second equation is not written explicitly.

However, Theorem \ref{thm_control_on_surfaces} \emph{cannot} be deduced as easily from that. Indeed, we would need a control of the density $a$ depending \emph{only} on the quasireversibility constant of the Finsler metric $F$.

Finally, note also that Theorem \ref{thm_energy_control} stays true for non-compact manifolds, whether the control of the energy via the weighted Laplacian that we gave above may not.

\section{Examples of large eigenvalues} \label{sec_big_eigenvalues}

In this section, for any given surface, we are going to construct Finsler metrics with an arbitrarily large first eigenvalue. Our examples are \emph{Randers metrics} and we start by giving some of their properties.

\subsection{Randers metrics}

A Randers metric $F$ on a $n$-manifold $M$ is given by
\[
 F = \sqrt{g_0} + \rho,
\]
where $g_0$ is a Riemannian metric and $\rho$ a $1$-form on $M$ such that $\lVert \rho \rVert_{g_0^{\ast}} < 1$. Under this condition, $F$ is a Finsler metric (see for instance \cite{BaoChernShen}).

We denote by $A$, $X$, $\alpha^F$ and $\Omega^F$ the Hilbert form, geodesic flow, angle form and volume form associated with $F$ and by $A_0$, $X_0$, $\alpha_0$ and $\Omega_0$ the same objects associated with $g_0$.

The study of the Finsler--Laplacian on Randers metrics is particularly agreeable because the objects associated with a Randers metric can be easily written in function of the Riemannian ones.
\begin{prop}
  We have the following equalities:
\begin{align}
 A &= A_0 + \pi^{\ast} \rho, \label{eq_A_randers}\\
 \ada &= \left(1+ \pi^{\ast} \rho (X_0) \right) A_0 \wedge dA_0^{n-1}, \label{eq_ada_randers}\\ 
 X &= \frac{1}{1+ \pi^{\ast} \rho (X_0)} X_0 + Y_0, \quad \text{where } Y_0 \in VHM,  \label{eq_X_randers}\\
 \Omega^F &= \Omega_0, \label{eq_omega_randers}\\
 \alpha^F &= \left( 1+ \pi^{\ast} \rho (X_0) \right) \alpha_0. \label{eq_alpha_randers}
\end{align}
\end{prop}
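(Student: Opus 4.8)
The plan is to derive the five identities in order, each one feeding the next. \textbf{Equation \eqref{eq_A_randers}} is immediate from the definition of the Hilbert form: writing $F(x,v+\eps w)=\sqrt{g_0(x,v+\eps w)}+\rho_x(v+\eps w)$ and differentiating at $\eps=0$ splits the difference quotient defining $A_{(x,\xi)}(Z)$ into the one defining $(A_0)_{(x,\xi)}(Z)$ plus the linear term $\rho_x(d\pi Z)=(\pi^\ast\rho)(Z)$. Hence $A=A_0+\pi^\ast\rho$, and consequently $dA=dA_0+\pi^\ast d\rho$.

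\textbf{Equation \eqref{eq_ada_randers}.} I would expand $A\wedge dA^{n-1}$ into monomials $(A_0+\pi^\ast\rho)\wedge dA_0^{\,n-1-j}\wedge(\pi^\ast d\rho)^{j}$ and show every monomial with $j\ge 1$ vanishes. The point is that $A_0$, $\pi^\ast\rho$ and $\pi^\ast d\rho$ all annihilate vertical vectors (they factor through $d\pi$), whereas $dA_0$ only annihilates \emph{pairs} of vertical vectors (the vertical bundle is integrable and contained in $\ker A_0$, so $dA_0(Y,Y')=-A_0([Y,Y'])=0$ for vertical $Y,Y'$). Evaluating such a monomial on a basis of $T_{(x,\xi)}HM$ split into $n-1$ vertical and $n$ transverse vectors, a nonzero term must feed the $2j$ arguments of $(\pi^\ast d\rho)^j$ and the single argument of $A_0$ or $\pi^\ast\rho$ with transverse vectors, leaving all $n-1$ vertical vectors to be distributed among only $n-1-j$ copies of $dA_0$; since $j\ge 1$, pigeonhole forces some $dA_0$ to receive two vertical vectors, so the term vanishes (and when $2j\ge n$ there are not even enough transverse slots). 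For the surviving $j=0$ part, $\pi^\ast\rho\wedge dA_0^{n-1}=\pi^\ast\rho(X_0)\,A_0\wedge dA_0^{n-1}$: apply $i_{X_0}$ to both sides, use $i_{X_0}dA_0=0$ and $A_0(X_0)=1$, and note that $i_{X_0}$ is injective on $(2n-1)$-forms since $X_0$ is nowhere zero. This gives \eqref{eq_ada_randers}.

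\textbf{Equation \eqref{eq_X_randers}.} First note $1+\pi^\ast\rho(X_0)>0$, since $|\pi^\ast\rho(X_0)|\le\lVert\rho\rVert_{g_0^\ast}<1$. I would verify directly that $\tilde X:=(1+\pi^\ast\rho(X_0))^{-1}X_0+Y_0$ satisfies the Reeb equations \eqref{eq:Reeb_field} for a suitable vertical $Y_0$. Using $A_0(X_0)=1$ and the vanishing of $A_0,\pi^\ast\rho$ on verticals one gets $A(\tilde X)=1$ for \emph{any} vertical $Y_0$; and since $i_{X_0}dA_0=0$ and $i_{Y_0}\pi^\ast d\rho=0$, the equation $i_{\tilde X}dA=0$ reduces to $i_{Y_0}dA_0=-(1+\pi^\ast\rho(X_0))^{-1}\,i_{X_0}\pi^\ast d\rho$. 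The right-hand side lies in the annihilator of $\R X_0\oplus VHM$ (it kills $X_0$ by antisymmetry and kills verticals because $\pi^\ast d\rho$ does), and $Y_0\mapsto i_{Y_0}dA_0$ is an isomorphism from $VHM$ onto that $(n-1)$-dimensional annihilator (injective because $\ker dA_0=\R X_0$ meets $VHM$ trivially; with image as claimed because $VHM$ is $dA_0$-isotropic and $dA_0$-orthogonal to $X_0$). So a unique such $Y_0$ works, and by uniqueness of the Reeb field $X=\tilde X$. Then \textbf{Equations \eqref{eq_omega_randers} and \eqref{eq_alpha_randers}} follow together: combining \eqref{eq_ada_randers} with $\alpha_0\wedge\pi^\ast\Omega_0=A_0\wedge dA_0^{n-1}$ gives $A\wedge dA^{n-1}=\big[(1+\pi^\ast\rho(X_0))\alpha_0\big]\wedge\pi^\ast\Omega_0$. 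The form $(1+\pi^\ast\rho(X_0))\alpha_0$ is never zero on $VHM$, and its fiberwise integral is $\int_{H_xM}\alpha_0+\int_{H_xM}\pi^\ast\rho(X_0)\,\alpha_0=\voleucl(\S^{n-1})+0$, the second integral vanishing because $\pi^\ast\rho(X_0)(x,\xi)=\rho_x(d\pi X_0(x,\xi))$ is odd under the fiber antipodal map (the $g_0$-geodesic flow reverses) while $\alpha_0$ is invariant under it up to exactly the same orientation sign ($g_0$ being reversible). By the uniqueness in Proposition \ref{prop:construction}, $\Omega^F=\Omega_0$ and $\alpha^F=(1+\pi^\ast\rho(X_0))\alpha_0$.

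\textbf{The main obstacle} is \eqref{eq_ada_randers}, together with the parallel bookkeeping for the Reeb field in \eqref{eq_X_randers}: one must check that the extra term $\pi^\ast d\rho$ in $dA$ contributes nothing to the top-degree form and only a controlled vertical correction to the geodesic vector field. The pigeonhole count on vertical directions is the mechanism that makes this work uniformly in the dimension.
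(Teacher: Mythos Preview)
Your proof is correct and follows essentially the same route as the paper's: the vertical-vector counting for \eqref{eq_ada_randers}, the contraction by $X_0$ to identify the scalar factor, and the reversibility of $g_0$ to kill $\int_{H_xM}\pi^\ast\rho(X_0)\,\alpha_0$ are all exactly what the paper does. The one minor difference is in \eqref{eq_X_randers}: the paper simply invokes Foulon's general decomposition $X=mX_0+Y_0$ and reads off $m$ from $A(X)=1$, whereas you construct $Y_0$ explicitly by solving $i_{Y_0}dA_0=-(1+\pi^\ast\rho(X_0))^{-1}i_{X_0}\pi^\ast d\rho$ and appeal to uniqueness of the Reeb field---your version is more self-contained, the paper's is shorter.
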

The proof of this result is in \cite[Chapter 3]{moi:these}, but we give it below for the convenience of the reader.

Note that the fact that the Holmes-Thompson volume of a Randers metric is equal to the volume of its Riemannian part is not new (see for instance \cite{ChengShen}), but not that widely known.

\begin{proof}
  By definition of $A$, for any $\xi \in H_{x}M$ and ${Z \in T_{\xi}HM}$, we have
\begin{align*}
 A_{\xi}(Z) &= \lim_{\eps \rightarrow 0} \frac{F_0\left(x,v + \eps d\pi (Z)\right) -F_0\left(x,v \right) + \eps \rho_x\left( d\pi (Z) \right)}{\eps} \\ 
  &= \left. A_0 \right._{\xi}(Z) + \pi^{\ast}\rho(Z).
\end{align*}
So we have Equation \eqref{eq_A_randers}.

From now on we will write $\rho$ instead of $\pi^{\ast}\rho$ as it will simplify notations and hopefully not lead to any confusion. Using this notation, we have: $A = A_0 + \rho$ and therefore $dA = dA_0 + d\rho$.\\
Note that $dA^{n-1} = dA_0^{n-1} + T$ where $T$ is a $(2n-2)$-form. So, as $d\rho$ is a $2$-form vanishing on $VHM$, and for $Y_1, Y_2 \in VHM$, $i_{Y_1} i_{Y_2} dA_0 = 0$, $T$ can be given at most $n-2$ vertical vectors, i.e., if $Y_1, \dots, Y_{n-1} \in VHM$, then $i_{Y_1} \dots i_{Y_{n-1}} T = 0$. Now this implies that the top-form $A\wedge T$ vanishes, hence $\ada = (A_0 + \rho)\wedge dA_0^{n-1}$. As $\ada$ and $A_0 \wedge dA_0^{n-1}$ are both volume forms, there exists a function $\lambda$ such that $\ada = \lambda A_0 \wedge dA_0^{n-1}$. We have 
$$
i_{X_0}(\ada) = (1+\rho(X_0)) dA_0^{n-1} = \lambda dA_0^{n-1},
$$
therefore $\lambda = 1+\rho(X_0)$ and we proved Equation \eqref{eq_ada_randers}.

There exists a function $m \colon HM \rightarrow \R$ and a vertical vector field $Y_0$ such that $X = m X_0 + Y_0$ (see \cite{Fou:EquaDiff}). So
\begin{equation*}
 1 = A(X) = A_0(m X_0 + Y_0) + \rho(m X_0 + Y_0) = m\left( A_0(X_0) + \rho(X_0) \right) = m \left(1+ \rho(X_0) \right),
\end{equation*}
which gives Equation \eqref{eq_X_randers}.

Let $\alpha^{\Omega_0}$ be defined by $\alpha^{\Omega_0} \wedge \pi^{\ast}\Omega_0 = \ada$. We have $\alpha^{\Omega_0} \wedge \pi^{\ast}\Omega_0 = \lambda A_0\wedge dA_0^{n-1} = \lambda \alpha_0 \wedge \pi^{\ast}\Omega_0 $, hence $\alpha^{\Omega_0}$ and $\lambda \alpha_0$ coincide on $VHM$. It is then immediate (by construction of $\Omega^F$, see \cite[Section 2.1]{moi:natural_finsler_laplace}) that 
\begin{equation*}
 \Omega^F = \frac{\int_{H_xM} (1+\rho(X_0)) \alpha_0}{\voleucl\left(\S^{n-1}\right)}  \Omega_0\, .
\end{equation*}
As the metric $g$ is Riemannian, it is reversible, therefore $\int_{H_xM} \rho(X_0) \, \alpha_0$ must be zero. Hence $\Omega^F = \Omega_0$ and $\alpha^F = (1+\rho(X_0)) \alpha_0$. Which are Equations \eqref{eq_omega_randers} and \eqref{eq_alpha_randers}.
\end{proof}

Using the above equations, we can see that the energy of a Randers metric has a very nice expression:
\begin{prop}
 For $f\in H^1(M)$, we have
\begin{equation*}
 E^F(f) = \frac{n}{\voleucl\left(\S^{n-1} \right)} \int_{HM} \frac{\left(L_{X_0} \pi^{\ast} f \right)^2}{1+ \pi^{\ast} \rho (X_0) }  A_0 \wedge dA_0^{n-1}.
\end{equation*}
\end{prop}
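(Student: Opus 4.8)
The plan is to substitute the Randers formulas from the previous Proposition directly into the definition \eqref{eq_energy_finsler} of the energy, namely $E^F(f) = \frac{n}{\voleucl(\S^{n-1})}\int_{HM}\left(L_X(\pi^{\ast}f)\right)^2\,\ada$, and to watch the vertical correction terms cancel.

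First I would compute $L_X(\pi^{\ast}f)$. Since the Lie derivative of a function along a vector field is its directional derivative, $L_X(\pi^{\ast}f) = d(\pi^{\ast}f)(X) = df\big(d\pi(X)\big)$. By Equation \eqref{eq_X_randers}, $X = \frac{1}{1+\pi^{\ast}\rho(X_0)}X_0 + Y_0$ with $Y_0 \in VHM = \ker d\pi$, so $d\pi(X) = \frac{1}{1+\pi^{\ast}\rho(X_0)}\,d\pi(X_0)$ and hence
\[
 L_X(\pi^{\ast}f) = \frac{1}{1+\pi^{\ast}\rho(X_0)}\,L_{X_0}(\pi^{\ast}f).
\]
Squaring, $\left(L_X(\pi^{\ast}f)\right)^2 = \left(1+\pi^{\ast}\rho(X_0)\right)^{-2}\left(L_{X_0}(\pi^{\ast}f)\right)^2$.

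Next I would insert Equation \eqref{eq_ada_randers}, that is $\ada = \left(1+\pi^{\ast}\rho(X_0)\right)A_0\wedge dA_0^{n-1}$. Multiplying the two expressions, one power of $1+\pi^{\ast}\rho(X_0)$ cancels and the integrand becomes $\frac{(L_{X_0}\pi^{\ast}f)^2}{1+\pi^{\ast}\rho(X_0)}\,A_0\wedge dA_0^{n-1}$; integrating over $HM$ gives exactly the claimed formula. The denominator is positive because $\lVert\rho\rVert_{g_0^{\ast}}<1$ forces $\big|\pi^{\ast}\rho(X_0)\big| < 1$ (the representative used to evaluate $X_0$ at a point of $HM$ has $g_0$-norm $1$), so the right-hand side is well defined.

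There is no real obstacle here: the only step worth isolating is that a vertical vector field lies in $\ker d\pi$ and therefore does not affect the derivative of a function pulled back from $M$, which is precisely what makes the awkward vertical term $Y_0$ in the Randers geodesic field disappear from the energy.
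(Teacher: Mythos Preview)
Your proof is correct and follows essentially the same route as the paper: both substitute \eqref{eq_X_randers} and \eqref{eq_ada_randers} into the definition of the energy, use that the vertical part $Y_0$ annihilates $\pi^{\ast}f$, and cancel one factor of $1+\pi^{\ast}\rho(X_0)$. Your added remark on the positivity of the denominator is a nice touch that the paper leaves implicit.
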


\begin{proof}
The energy associated to $\Delta^F$ is given by
\begin{equation*}
 E^F(f) = \frac{n}{\voleucl\left(\S^{n-1} \right)} \int_{HM} \left(L_{X} \pi^{\ast} f \right)^2 \ada.
\end{equation*}
We saw that $X = \left(1+ \pi^{\ast} \rho (X_0)\right)^{-1} X_0 + Y_0$ with $Y_0$ a vertical vector field. Hence $L_{Y_0} \pi^{\ast} f =0$ (because $f$ is a function on $M$), so
\begin{equation*}
 L_{X} \pi^{\ast} f = \left(1+ \pi^{\ast} \rho (X_0)\right)^{-1} L_{X_0} \pi^{\ast} f.
\end{equation*}
Therefore,
\begin{align*}
 E^F(f) &= \frac{n}{\voleucl\left(\S^{n-1} \right)} \int_{HM} \left(1+ \pi^{\ast} \rho (X_0)\right)^{-2}\left(L_{X_0} \pi^{\ast} f \right)^2 \left(1+ \pi^{\ast} \rho (X_0) \right) A_0 \wedge dA_0^{n-1} \\
 &= \frac{n}{\voleucl\left(\S^{n-1} \right)} \int_{HM} \frac{\left(L_{X_0} \pi^{\ast} f \right)^2}{1+ \pi^{\ast} \rho (X_0) }  A_0 \wedge dA_0^{n-1}. \qedhere
\end{align*}
\end{proof}

The Proposition suggest a way of getting large eigenvalues. We can choose a $1$-form $\rho$ with a norm very close to $1$, so that there will be a direction in which $1+ \pi^{\ast} \rho (X_0)$ is very small. So functions such that their derivative in that direction is not too small will have a huge energy. Then, we can change the Riemannian metric so that functions that varies mostly in the direction of the kernel of $\rho$ will also have a large energy.
In the case of the torus, this strategy works perfectly well. It also works with a bit more care in the case of the sphere. Finally, we will reduce the problem of finding large eigenvalues on a surface to finding large eigenvalues for the sphere.

\subsection{Large eigenvalues on the $2$-Torus} \label{subsec_large_eigen_on_torus}

We start by constructing large eigenvalues on the torus. This step is not at all necessary, as it could be deduced from the general method that we describe in the next Section. However, we want to give it as it contains all the Finsler technicalities for constructing large eigenvalues and none of the spectral theoretic ones that we will need in the general case.

Let $\T^2 = \R^2/ \Z^2$ with standard coordinates $(x,y)$. Let $h>1$ and $r = h^{-1}$. Let $g_0$ be the Riemannian metric on $\T^2$ given in coordinates by:
\begin{equation*}
 g_0:= \left[ \begin{array}{cc}
             h^2 & 0 \\
	     0 & r^2
            \end{array}\right].
\end{equation*}
Let $0\leq \eta<1$. We set $\rho := \eta h dx$. The norm of $\rho$ for $g_0^{\ast}$ is
\begin{equation*}
\lVert \rho \rVert_{g_0^{\ast}} = \sqrt{h^{-2}\left(\eta h\right)^2} = \eta.
\end{equation*}

So we can set $F := \sqrt{g_0} + \rho$ and $F$ is a Randers metric. Furthermore, the volume of $\T^2$ for $F$ is $1$.

\begin{thm}
 For $\eta$ sufficiently close to $1$, we have
\begin{equation*}
 \lambda_1(F) \geq \frac{4\pi^2}{r^2} \, .
\end{equation*}
\end{thm}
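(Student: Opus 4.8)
The plan is to use the explicit formula for the energy of a Randers metric together with the Min-Max principle (Theorem \ref{thm_min_max}) to bound $\lambda_1(F)$ from below. Since $\vol(\T^2, F) = 1$ and (by Equation \eqref{eq_omega_randers}) $\Omega^F = \Omega_0$, we have for any $u \in H^1(\T^2)$ that $R^F(u) = E^F(u)/\int_{\T^2} u^2\,\Omega_0$. By Min-Max, $\lambda_1(F) = \inf R^F(u)$ where the infimum runs over $u \in H^1(\T^2)$ with $\int_{\T^2} u\,\Omega_0 = 0$ (equivalently $u$ orthogonal to the constants). So it suffices to show that every such $u$ satisfies $E^F(u) \geq \frac{4\pi^2}{r^2}\int_{\T^2} u^2\,\Omega_0$ once $\eta$ is close enough to $1$.

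The key computational input is the Randers energy formula: in the notation above,
\begin{equation*}
 E^F(u) = \frac{2}{\voleucl(\S^1)} \int_{H\T^2} \frac{\left(L_{X_0}\pi^{\ast}u\right)^2}{1+\pi^{\ast}\rho(X_0)}\, A_0\wedge dA_0.
\end{equation*}
First I would compute $\pi^{\ast}\rho(X_0)$ explicitly: for $g_0$ as given, a unit vector in $T_x\T^2$ at angle $\theta$ (measured in the $g_0$-orthonormal frame $\partial_x/h,\ h\,\partial_y$) has the form $v = (\cos\theta/h,\ h\sin\theta)$, so $\rho(v) = \eta h \cdot (\cos\theta/h) = \eta\cos\theta$, and $L_{X_0}\pi^{\ast}u$ at this point is $du(v) = (\cos\theta/h)\,\partial_x u + h\sin\theta\,\partial_y u$. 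Since $0 \le \eta < 1$ we have $1+\pi^{\ast}\rho(X_0) = 1+\eta\cos\theta \le 2$, hence the integrand is bounded below by $\tfrac12(L_{X_0}\pi^{\ast}u)^2$ and therefore $E^F(u) \ge \tfrac12 E^{g_0}(u) = \tfrac12\int_{\T^2}\lVert du\rVert_{g_0^{\ast}}^2\,\Omega_0 = \tfrac12\int_{\T^2}\bigl(h^{-2}(\partial_x u)^2 + h^2(\partial_y u)^2\bigr)\,dx\,dy$. However this crude bound is not enough by itself, because a function depending only on $x$ contributes only $h^{-2}(\partial_x u)^2$, which is small. The point of choosing $\eta$ near $1$ is exactly to make $1+\eta\cos\theta$ very small near $\theta = \pi$, so that the $\partial_x u$ contribution is amplified there.

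The main obstacle, and the heart of the argument, is to convert the integrand's blow-up near $\theta=\pi$ into a genuine lower bound of the form $c(\eta)\int(\partial_x u)^2\,dx\,dy$ with $c(\eta)\to\infty$ as $\eta\to 1$, \emph{while not losing the $h^2(\partial_y u)^2$ term}. Concretely, I would split the fibre integral over $\theta$ into a neighbourhood $|\theta-\pi|\le\delta$ (where I keep only the $(\cos\theta/h)\partial_x u$ part of $du(v)$, bounding $(du(v))^2 \ge \tfrac12(\cos\theta/h)^2(\partial_x u)^2 - (h\sin\theta)^2(\partial_y u)^2$ and choosing $\delta$ small so the error is absorbed) and its complement (where I use $1+\eta\cos\theta\ge 1-\eta\cos\delta'$ bounded below uniformly and recover a fixed fraction of $E^{g_0}(u)$). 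Carrying out the $\theta$-integral $\int_{\pi-\delta}^{\pi+\delta}\frac{\cos^2\theta}{1+\eta\cos\theta}\,d\theta$ shows it diverges like $(1-\eta)^{-1/2}$ as $\eta\to 1$. Combining, one gets $E^F(u)\ge \bigl(A(\eta)\,h^{-2}\int(\partial_x u)^2 + B\,h^2\int(\partial_y u)^2\bigr)$ with $A(\eta)\to\infty$ and $B>0$ fixed. Finally, by the Poincaré inequality on $\T^2$ for the flat product metric $dx^2+dy^2$ applied to the mean-zero function $u$, one has $\int u^2 \le \frac{1}{4\pi^2}\int\bigl((\partial_x u)^2+(\partial_y u)^2\bigr)$ (and in fact the mixed weighting still controls $\int u^2$ since Fourier modes $e^{2\pi i(mx+ny)}$ with $(m,n)\ne 0$ satisfy $m^2 + n^2 \ge 1$); pushing the constants, once $\eta$ is close enough to $1$ that $A(\eta)h^{-2}\ge 4\pi^2/r^2 = 4\pi^2 h^2$ and keeping in mind $B h^2$ handles the $y$-modes, we obtain $E^F(u) \ge \frac{4\pi^2}{r^2}\int_{\T^2}u^2\,\Omega_0$ for all mean-zero $u$, hence $\lambda_1(F)\ge 4\pi^2/r^2$.
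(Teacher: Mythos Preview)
Your strategy is exactly the paper's: use the Randers energy formula to bound $E^F(u)$ below by a weighted Dirichlet integral $c_x\int(\partial_x u)^2 + c_y\int(\partial_y u)^2$, push $c_x$ to infinity by taking $\eta\to 1$, and finish with the Fourier/Poincar\'e inequality on the flat unit torus. The difference is only in how the fibre integral over $\theta$ is handled. The paper simply expands the square
\[
\bigl(h^{-1}\cos\theta\,\partial_x u + r^{-1}\sin\theta\,\partial_y u\bigr)^2
\]
and observes that the cross integral $\int_0^{2\pi}\frac{\cos\theta\sin\theta}{1+\eta\cos\theta}\,d\theta$ vanishes by the symmetry $\theta\mapsto -\theta$; the two remaining integrals are computed in closed form, giving
\[
E^F(u)=\frac{2h^{-2}}{(1+\sqrt{1-\eta^2})\sqrt{1-\eta^2}}\int(\partial_x u)^2
+\frac{2r^{-2}}{1+\sqrt{1-\eta^2}}\int(\partial_y u)^2.
\]
Both coefficients are then visibly $\geq r^{-2}$ once $\eta$ is close enough to $1$, and the exact constant $4\pi^2/r^2$ drops out.

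Your splitting-and-estimation argument can be made to work, but it hides this structure and, as written, does not quite reach the stated constant: you only claim ``$B>0$ fixed'' for the $\partial_y$ coefficient, whereas to get $\lambda_1\geq 4\pi^2/r^2$ (rather than $4\pi^2 B/r^2$) you need $B\geq 1$, i.e.\ the $\partial_y$ coefficient $\geq h^2$. That requires a sharper lower bound on $\frac{1}{\pi}\int_0^{2\pi}\frac{\sin^2\theta}{1+\eta\cos\theta}\,d\theta$ than the crude $1+\eta\cos\theta\leq 2$ gives. Once you notice the cross term vanishes by parity, there is no need for the neighbourhood-of-$\pi$ decomposition at all.
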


\begin{proof}
 
Let $(x,y,\xi_x,\xi_y)$ be the standard coordinates on $T\T^2$. Let $\theta$ be the coordinate on $H_{x,y}\T^2$, defined by
\begin{align*}
 \cos \theta &= \frac{h\xi_x}{\sqrt{h^2 \xi_x^2 + r^2 \xi_y^2}} \\
 \sin \theta &= \frac{h\xi_y}{\sqrt{h^2 \xi_x^2 + r^{2} \xi_y^2}}.
\end{align*}
By definition, we have
\begin{equation*}
 A_0 = \frac{\partial \sqrt{g_0}}{\partial \xi_x} dx  + \frac{\partial \sqrt{g_0}}{\partial \xi_y}  dy = h \cos \theta dx + r \sin \theta dy.
\end{equation*}
A simple verification shows that
\begin{equation*}
 X_0 = h^{-1} \cos \theta \frac{\partial}{\partial x} + r^{-1} \sin \theta \frac{\partial}{\partial y}.
\end{equation*}
So, $\pi^{\ast}\rho (X_0) = \eta h h^{-1} \cos \theta   = \eta \cos \theta$.

Now we can start computing the energy of a function $f \in H^1(\T^2)$.
First, we have
\begin{equation*}
 L_{X_0} \pi^{\ast} f = h^{-1} \cos \theta \frac{\partial f}{\partial x} + r^{-1} \sin \theta \frac{\partial f}{\partial y},
\end{equation*}
and
\begin{equation*}
 A_0\wedge dA_0 = - d\theta \wedge dx \wedge dy.
\end{equation*}
Hence,
\begin{multline*}
 E^F(f) = \frac{1}{\pi} \int_{0\leq x , y \leq 1}\Bigg[ \int_{0}^{2\pi} \frac{h^{-2} \cos^2\theta}{ 1 + \eta \cos \theta} d\theta \left(\frac{\partial f}{\partial x} \right)^2  + \int_{0}^{2\pi} \frac{2\cos\theta \sin\theta}{ 1 + \eta \cos \theta} d\theta \left(\frac{\partial f}{\partial x}\frac{\partial f}{\partial y} \right)  \\ + \int_{0}^{2\pi} \frac{r^{-2} \sin^2\theta}{ 1 + \eta \cos \theta} d\theta \left(\frac{\partial f}{\partial y} \right)^2 \Bigg] dxdy.
\end{multline*}
Computations (using Mathematica) gives
\begin{align} 
 \int_{0}^{2\pi} \frac{h^{-2} \cos^2\theta}{ 1 + \eta \cos \theta} d\theta&= \frac{2\pi h^{-2} }{\left( 1 + \sqrt{1 - \eta^2} \right) \sqrt{1 - \eta^2} } \label{eq_calcul_integral_1}\\
 \int_{0}^{2\pi} \frac{2\cos\theta \sin\theta}{ 1 + \eta \cos \theta} d\theta&= 0  \\
 \int_{0}^{2\pi} \frac{r^{-2} \sin^2\theta}{ 1 + \eta \cos \theta} d\theta &=  \frac{2\pi r^{-2}}{\left( 1 + \sqrt{1 - \eta^2} \right)} \label{eq_calcul_integral_2}.
\end{align}
So we have
\begin{multline*}
  E^F(f) =  \frac{2 h^{-2} }{\left( 1 + \sqrt{1 - \eta^2} \right) \sqrt{1 - \eta^2} }  \int_{0\leq x , y \leq 1} \left(\frac{\partial f}{\partial x} \right)^2 dxdy \\ + \frac{2 r^{-2}}{\left( 1 + \sqrt{1 - \eta^2} \right)} \int_{0\leq x , y \leq 1} \left(\frac{\partial f}{\partial y}\right)^2 dxdy . 
\end{multline*}

For any fixed $h$, we see that
\begin{equation*}
 \lim_{\eta \rightarrow 1} \frac{2 h^{-2} }{\left( 1 + \sqrt{1 - \eta^2} \right) \sqrt{1 - \eta^2} } =  + \infty,
\end{equation*}
and, for any $0\leq \eta <1$,
\begin{equation*}
 \frac{2 r^{-2}}{\left( 1 + \sqrt{1 - \eta^2} \right)} \geq r^{-2}.
\end{equation*}

Let us choose $\eta$ such that $ 2 h^{-2} \left( \left( 1 + \sqrt{1 - \eta^2} \right) \sqrt{1 - \eta^2}\right)^{-1} \geq r^{-2}$. Then
\begin{equation*}
 E^F(f) \geq r^{-2} \int_{0\leq x , y \leq 1} \left(\frac{\partial f}{\partial x} \right)^2 + \left(\frac{\partial f}{\partial y}\right)^2 dxdy,
\end{equation*}
and the Rayleigh quotient of $f$ is such that 
\begin{equation*}
 R^F(f) \geq r^{-2} \frac{\int_{0\leq x , y \leq 1} \left(\frac{\partial f}{\partial x} \right)^2 + \left(\frac{\partial f}{\partial y}\right)^2 dxdy}{\int_{0\leq x , y \leq 1} f^2 dxdy}.
\end{equation*}
Therefore, by the Min-Max principle,
\[
 \lambda_1(F) \geq 4\pi^2 r^{-2}. \qedhere
\]
\end{proof}

\begin{rem}
Note that, in this case, the Finsler--Laplace operator $\Delta^F$ is equal to the Laplace--Beltrami operator of $\sigma_F$, the symbol of $\Delta^F$. Indeed, $\Delta^F$ is a weighted Laplacian where the weight is just a constant. However, this is not in contradiction with what is known about the bounds of eigenvalues on torus. Even so the volume of the torus for the Randers metric is equal to $1$, the volume of the torus for the symbol metric tends to zero as $\eta$ tends to $1$.
\end{rem}

\subsection{Large eigenvalues on surfaces}

We want to show the following
\begin{thm}\label{thm_big_eigenvalues}
 For any surface $\Sigma$ and any $M>0$, there exists a Randers metric $F = \sqrt{g} + \rho$ such that 
\begin{equation*}
 \lambda_1(F) \vol\left(\Sigma, F \right) \geq M.
\end{equation*}
\end{thm}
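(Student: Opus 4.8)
The plan is to reduce the problem on an arbitrary surface $\Sigma$ to the case of the torus (Section \ref{subsec_large_eigen_on_torus}) by a cut-and-paste argument together with a domain-monotonicity type estimate for the Rayleigh quotient. First I would fix $M>0$ and recall that, by the torus construction, there is a Randers metric $F_{\T}=\sqrt{g_0}+\rho$ on a small flat-product region --- say a product $[0,1]\times[0,a]$ with the metric $g_0$ and the $1$-form $\rho=\eta h\,dx$ from Subsection \ref{subsec_large_eigen_on_torus} --- on which \emph{every} function with prescribed boundary behavior has Rayleigh quotient bounded below by a quantity that tends to $+\infty$ as $\eta\to 1$ and $r=h^{-1}\to 0$; crucially the $F_{\T}$-volume of this region can be kept small, in fact $\leq 1$. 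The point of isolating the torus case first was precisely that it contains all the Finslerian bookkeeping; here we only need the \emph{local} energy estimate
\[
 E^{F_{\T}}_{U}(f)=\frac{2h^{-2}}{(1+\sqrt{1-\eta^2})\sqrt{1-\eta^2}}\int_U\Big(\frac{\partial f}{\partial x}\Big)^2 +\frac{2r^{-2}}{1+\sqrt{1-\eta^2}}\int_U\Big(\frac{\partial f}{\partial y}\Big)^2,
\]
which is valid on any coordinate rectangle $U$ regardless of what happens outside.

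Next I would build the global metric on $\Sigma$: choose an embedded disk (or a small flat cylinder) $U\subset\Sigma$, put the Randers metric $F_{\T}$ above on $U$, and extend it to a \emph{fixed} Randers metric $F$ on all of $\Sigma$ (for instance interpolating $F_{\T}$ with an arbitrary reversible Riemannian background metric on $\Sigma\smallsetminus U$ in a collar), rescaling globally so that $\vol(\Sigma,F)=1$ --- eigenvalues scale by the inverse square of the scaling, so it is harmless to record the bound as $\lambda_1(F)\vol(\Sigma,F)$. Then I would estimate $\lambda_1(F)=\lambda_1(\Sigma,F)$ from below using the Min-Max principle, Theorem \ref{thm_min_max}. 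For the first \emph{nonzero} eigenvalue one tests over the two-dimensional space spanned by the constants and a trial function $f$; the usual trick is: for any $u=a\mathbf{1}+bf$ with $\int_\Sigma f\,\Omega^F=0$, one has $R^F(u)\geq$ (something controlled by) $R^F(f)$ once $f$ is $L^2$-orthogonal to constants, so it suffices to show $R^F(f)\geq M$ for \emph{every} admissible $f$. The energy is additive over regions, $E^F(f)=E^F_U(f)+E^F_{\Sigma\smallsetminus U}(f)\geq E^F_U(f)$, and on $U$ the displayed local estimate shows $E^F_U(f)$ is huge unless $f$ is nearly constant on $U$ in \emph{both} coordinate directions (the $x$-direction because of the $(1-\eta^2)^{-1/2}$ blow-up, the $y$-direction because of $r^{-2}$). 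If instead $f$ is nearly constant on $U$, then --- since $f$ has zero mean on $\Sigma$ and $U$ has, say, at least half the total volume (arrange this by scaling the background part down) --- the $L^2$-mass $\int_\Sigma f^2$ is comparable to $\int_U f^2$, and one still extracts a lower bound on $R^F(f)$ from a Poincaré inequality on $U$. A clean way to organize the dichotomy is: either $\int_U |df|^2$ is bounded below by a fixed multiple of $\int_\Sigma f^2$ --- in which case $E^F_U(f)\geq c\,r^{-2}\int_\Sigma f^2$ directly --- or it is not, in which case $f$ restricted to $U$ is within $\varepsilon\|f\|_{L^2}$ of its mean on $U$, hence (zero global mean plus $\vol(U)\geq\tfrac12$) $\|f\|_{L^2}^2$ is small compared to what it "should" be, a contradiction once $\varepsilon$ is chosen small. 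Either branch forces $R^F(f)\gtrsim r^{-2}$, and choosing $r$ small enough gives $\lambda_1(F)\geq M$.

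The main obstacle, and the only place requiring genuine care, is controlling the interaction between $U$ and its complement: one must make sure that the extension $F$ on $\Sigma\smallsetminus U$ does not contribute a \emph{negative} or uncontrolled amount to the energy (it cannot --- $E^F$ is an integral of a nonnegative integrand, so dropping $\Sigma\smallsetminus U$ only decreases it) and that the total volume normalization does not secretly kill the gain (it does not, since rescaling multiplies $\lambda_1$ by a constant and $\vol$ by its inverse square, and we track the scale-invariant product). A secondary technical point is the Poincaré/mean-value bookkeeping on $U$ with zero \emph{global} mean rather than zero mean on $U$; this is handled by arranging $\vol(\Sigma\smallsetminus U,F)$ to be a small fraction of the total, so that $\big|\text{mean}_U f\big|$ is small and $\|f\|_{L^2(\Sigma)}^2\leq 2\|f\|_{L^2(U)}^2$ after recentering. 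With those caveats the torus computation does all the real work, exactly as the discussion preceding the statement anticipates, and the sphere case is the intermediate stepping stone one treats first before the general surface.
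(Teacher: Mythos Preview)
Your dichotomy in the second paragraph has a genuine gap. In the branch where $\int_U |df|^2 < c_0\|f\|_{L^2(\Sigma)}^2$, you conclude that $f|_U$ is $L^2$-close to its mean $\bar f_U$ on $U$ and that $|\bar f_U|$ is small (from zero global mean and $\vol(U)\geq\tfrac12$); from this you claim that ``$\|f\|_{L^2}^2$ is small compared to what it should be, a contradiction''. But there is no contradiction: nothing prevents $f$ from carrying essentially all of its $L^2$-mass on $\Sigma\smallsetminus U$. Concretely, take any $h\in H^1_0(\Sigma\smallsetminus U)$ with $\int_{\Sigma\smallsetminus U} h=0$ and extend by $0$ on $U$; then $f=h$ has zero mean, $\int_U|df|^2=0$, and $R^F(f)=E^F_{\Sigma\smallsetminus U}(h)/\|h\|^2$ is governed entirely by the fixed Riemannian metric on $\Sigma\smallsetminus U$, with no $r^{-2}$ or $(1-\eta^2)^{-1/2}$ gain. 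You then need a \emph{separate} lower bound on the Rayleigh quotient in this branch, which (if it can be done at all along these lines) requires scaling the metric on $\Sigma\smallsetminus U$ so that its Dirichlet eigenvalue is large, together with trace estimates on $\partial U$ to handle functions that do not vanish on the interface. None of this is in your write-up, and it is not a bookkeeping detail: it is where the topology of $\Sigma$ actually enters.

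The paper proceeds quite differently and sidesteps this difficulty. Rather than bounding $R^F(f)$ from below for every test function on $\Sigma$, it attaches to $\Sigma$ a long Randers cylinder of length $L$ (so $\vol(\Sigma,F)\gtrsim L$) and argues by contradiction: if $\lambda_1(\Sigma,F)\le L^{-4/5}$ then the first eigenfunction has tiny energy, hence (by a Sobolev inequality on the fixed-geometry topological part $S$) is almost constant on $S$; one then transplants it to a function on a model sphere $S_0$ that agrees with $\Sigma$ off $S$, with the same Rayleigh quotient (Lemma~\ref{lem_construction_u_tilde}). The problem is thus reduced to showing $\lambda_1(S_0,F)>L^{-1/5}$ (Lemma~\ref{lem_big_eigenvalue_on_sphere}), and on $S_0$ one can exploit the $S^1$- and reflection-symmetry to reduce to a one-variable analysis on the cylinder and the cap. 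The transplant step is precisely what handles the case you are missing: a small eigenvalue forces the eigenfunction to be flat on the topological part, so that part can be replaced by a spherical cap without changing the Rayleigh quotient.
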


The proof will consist of adding a very long Randers ``nose'' to the surface $\Sigma$, and showing that if there was a topological bound on the eigenvalue of $F$ on $\Sigma$, then there would be a bound on the eigenvalue of Randers metrics on the sphere obtained by removing the topology of $\Sigma$. On the sphere, we are able to use the same kind of Randers metric as on the torus above and prove that its eigenvalues are large.
Let us first describe what we mean by this Randers nose.

Let $S_0$ be a sphere embedded in $\R^3$ in such a way that it is symmetric with respect to the $(0,y,z)$-plan and invariant by rotation around the $x$-axis. We also suppose that $S_0$ can be decomposed into the following pieces. We just describe the part of $S_0$ in the $x\geq 0$ half-space, from left to right (the other side being symmetric) (see Figure \ref{fig_la_sphere}).
\begin{figure}[t]
\begin{pspicture}(-5,-5)(5,5)
\psset{viewpoint=20 -40 40,Decran=40}
 \axesIIID[showOrigin=false](0,0,0)(15,4,4)



\psPoint(0,-1,0){X0}
\psPoint(7.5,-1,0){X1}
\psPoint(7.5,1,1){Z}
\psPoint(8,-1,0){X2}
\psPoint(8.2,-2.2,0){Y2}
\psPoint(10,-1,0){X4}
\psPoint(13,1.5,2){Z1}

\uput[l](Z1){$\color{OliveGreen} S_k$}
\psbrace[rot=90,ref=t](X0)(X1){$\color{red} C_L$}
\psbrace[rot=90,ref=t](X1)(X4){${\color{blue} C_1} \cup {\color{violet} C_2 } \cup {\color{green} C_3}$}

\defFunction[algebraic]{sphere1}(x,t){x+11.73205081}{(4-x^2)^(1/2)*cos(t)}{(4-x^2)^(1/2)*sin(t)}
\psSolid[object=surfaceparametree,
    opacity=0.7,
    linecolor={OliveGreen},
    base= -1.73205081 2 0 6.28318531,
    action = draw,
    function=sphere1,
    linewidth=0.5\pslinewidth,ngrid=20 20]

\defFunction[algebraic]{sphere2}(x,t){x-11.73205081}{(4-x^2)^(1/2)*cos(t)}{(4-x^2)^(1/2)*sin(t)}
\psSolid[object=surfaceparametree,
    opacity=0.7,
    linecolor=black,
    base= -2 1.73205081 0 6.28318531,
    action = draw,
    function=sphere2,
    linewidth=0.5\pslinewidth,ngrid=20 20]

\psSolid[object=cylindrecreux,
    h=7.5,
    r=1,
    RotY=90,
    action=draw,
    linewidth=0.01,
    ngrid=10 25,
    opacity = 0.7,
    linecolor=red,
    incolor=white](0,0,0)

\psSolid[object=cylindrecreux,
    h=0.5,
    r=1,
    RotY=90,
    action=draw,
    linewidth=0.01,
    ngrid=10 25,
    opacity = 0.7,
    linecolor=blue,
    incolor=white](7.5,0,0)
\psSolid[object=cylindrecreux,
    h=1,
    r=1,
    RotY=90,
    action=draw,
    linewidth=0.01,
    ngrid=10 25,
    opacity = 0.7,
    linecolor=violet,
    incolor=white](8,0,0)

\psSolid[object=cylindrecreux,
    h=1,
    r=1,
    RotY=90,
    action=draw,
    linewidth=0.01,
    ngrid=10 25,
    opacity = 0.7,
    linecolor=green,
    incolor=white](9,0,0)

\psSolid[object=cylindrecreux,
    h=10,
    r=1,
    RotY=-90,
    action=draw,
    linewidth=0.01,
    ngrid=10 25,
    opacity = 0.7,
    fillcolor=red,
    incolor=white](0,0,0)
\end{pspicture}
\caption{} \label{fig_la_sphere}
\end{figure}
\begin{itemize}
 \item A cylinder $C_L$ of length $L>1$ and radius $1$;
 \item A cylinder $C_1$ of small length $l_1$ and radius $1$;
 \item Two cylinders $C_2$ and $C_3$ each of length $1$ and radius $1$;
 \item A sphere $S_k$ of radius $k\geq 1$, minus a disk of radius $1$.
\end{itemize}
We write $C^-_L$, $C^-_1$, $C^-_2$, $C^-_3$ and $S^-_k$ for the symmetric parts.

Now we construct $\Sigma$, embedded in $\R^3$, such that $\Sigma$ and $S_0$ coincide apart from on $S_k$. We denote by $S$ the part of $\Sigma$ which is different from $S_0$.

We equip the surfaces $S_0$ and $\Sigma$ with the Riemannian metrics obtained from $\R^3$. We suppose that $S$ was normalized so that it has the same volume as $S_k$, and we suppose that $k$ is large enough so that the injectivity radius of $\Sigma$ is greater than $1$.

From now on, everything is fixed apart from $L$, which can be taken as large as we want.

Now, for $0\leq \eta <1$, we define the $1$-form $\rho$ on $C_1^- \cup C^-_L \cup C_L \cup C_1$ by:
\begin{equation*}
 \rho_p = \begin{cases}
          \eta dx \quad \text{when } p \in C^-_L \cup C_L \\
	  \eta f(x) dx \quad \text{when } p \in C_1 \\
	  \eta f(-x) dx \quad \text{when } p \in C^-_1,
         \end{cases}
\end{equation*}
where $f(x)$ is any decreasing smooth function with value $1$ at $x=L$ and $0$ at $x=L + l_1$. We extend $\rho$ to be vanishing everywhere else. Note that the norm of $\rho$ (considered as a $1$-form on $\Sigma$ or $S_0$) is strictly less than $1$.

We define a Randers metric on $\Sigma$ and on $S_0$ by $F = \sqrt{g} + \rho$.

The proof of Theorem \ref{thm_big_eigenvalues} will follow from the next two lemmas.

\begin{lem} \label{lem_construction_u_tilde}
 Let $L$ be large and $\lambda_1(\Sigma, F)$ the first eigenvalue of $-\Delta^F$ on $\Sigma$.
 If $\lambda_1(\Sigma, F) \leq L^{-4/5}$ then there exists a function $\tilde{u} \colon \Sigma \rightarrow \R$ such that:
\begin{itemize}
 \item The function $\tilde{u}$ is constant on $S$;
 \item The function $\tilde{u}$ is orthogonal to the constant functions on $\Sigma$;
 \item There exists a constant $K>0$, depending only on the geometry of the surface $S$ (and \emph{not} on $L$), such that the Rayleigh quotient of $\tilde{u}$ verifies
 \begin{equation*}
  R^F(\tilde{u}) \leq \frac{K}{L^{2/5}} \,.
 \end{equation*}
\end{itemize}
\end{lem}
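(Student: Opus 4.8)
The plan is to take for $u$ an eigenfunction of $-\Delta^F$ on $\Sigma$ associated with $\lambda_1(\Sigma,F)\le L^{-4/5}$; it is orthogonal to the constants, and we normalize it by $\int_\Sigma u^2\,\Omega^F=1$, so that $E^F(u)=\lambda_1(\Sigma,F)\le L^{-4/5}$. We then deform $u$ into a function $\tilde u$ that is constant on $S$, without increasing the energy too much and without losing too much $L^2$-mass. Set $N:=C_3\cup S$. Since $\rho$ vanishes on a neighbourhood of $N$, there $F=\sqrt g$ is Riemannian, so the energy density of $E^F$ on $N$ is the Riemannian Dirichlet density $|\nabla u|_g^2\,\Omega^F$; as the integrand defining $E^F$ is pointwise non-negative, $\int_N|\nabla u|_g^2\,\Omega^F\le E^F(u)\le L^{-4/5}$. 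Now $N$ is a fixed compact surface with boundary whose geometry depends only on that of $S$ (the piece $C_3$ being a fixed unit cylinder), so it has a finite Neumann--Poincar\'e constant $P_N$ depending only on $S$; writing $\bar u_N$ for the mean of $u$ over $N$, this gives $\int_N (u-\bar u_N)^2\,\Omega^F\le P_N L^{-4/5}$, and also $\bar u_N^2\le(\vol N)^{-1}\int_N u^2\le(\vol N)^{-1}$ by Cauchy--Schwarz.

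Next I would fix a cut-off $\chi\colon\Sigma\to[0,1]$ equal to $1$ outside $C_3\cup S$, equal to $0$ on $S$, interpolating across the unit cylinder $C_3$ with $|\nabla\chi|_g\le 2$, and set
\[
\tilde u_0:=\chi\,u+(1-\chi)\,\bar u_N,\qquad \tilde u:=\tilde u_0-\beta,\qquad \beta:=\tfrac1{\vol(\Sigma,F)}\int_\Sigma\tilde u_0\,\Omega^F .
\]
By construction $\tilde u$ is constant on $S$ (equal to $\bar u_N-\beta$) and orthogonal to the constants. For the energy, $\tilde u$ agrees with $u$ outside $N$ and is constant on $S$, while on $C_3$ one has $|\nabla\tilde u_0|_g^2\le 2|\nabla u|_g^2+8(u-\bar u_N)^2$; integrating over $C_3$ and using the two bounds above yields $E^F(\tilde u)=E^F(\tilde u_0)\le(3+8P_N)\,L^{-4/5}$.

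The delicate point, and the main obstacle, is to bound $\int_\Sigma\tilde u^2\,\Omega^F$ below by a positive constant independent of $L$: a priori the whole $L^2$-mass of $u$ could concentrate on $N$, which is precisely where $u$ is being flattened to a constant. Here the bookkeeping is as follows. Since $\tilde u_0=u$ off $N$ and $\tilde u_0=\bar u_N$ on $S$,
\[
\int_\Sigma\tilde u_0^2\,\Omega^F\ \ge\ \int_{\Sigma\setminus N}u^2+\bar u_N^2\vol S\ =\ 1-\int_N u^2+\bar u_N^2\vol S\ \ge\ 1-P_N L^{-4/5}-\bar u_N^2(\vol N-\vol S).
\]
Now $\vol N-\vol S=\vol C_3=2\pi$ and $\bar u_N^2\le(\vol N)^{-1}=(\vol S+2\pi)^{-1}$, so the right-hand side is at least $\tfrac{\vol S}{\vol S+2\pi}-P_N L^{-4/5}$, which for $L$ large is bounded below by a positive constant $c$ depending only on $\vol S$. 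Finally $\beta$ is negligible: since $\tilde u_0-u=(1-\chi)(\bar u_N-u)$ is supported on $N$, Cauchy--Schwarz gives $\beta^2\vol(\Sigma,F)=\tfrac1{\vol(\Sigma,F)}\bigl(\int_N(1-\chi)(\bar u_N-u)\,\Omega^F\bigr)^2\le\tfrac{\vol N}{\vol(\Sigma,F)}\,P_N L^{-4/5}\le\tfrac{\vol N\,P_N}{\vol S}\,L^{-4/5}$, using $\vol(\Sigma,F)=\vol(\Sigma,g)\ge\vol S$ by \eqref{eq_omega_randers}; this tends to $0$, so $\int_\Sigma\tilde u^2\,\Omega^F=\int_\Sigma\tilde u_0^2\,\Omega^F-\beta^2\vol(\Sigma,F)\ge c/2$ for $L$ large.

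Combining these, $R^F(\tilde u)=E^F(\tilde u)\big/\!\int_\Sigma\tilde u^2\,\Omega^F\le\frac{2(3+8P_N)}{c}\,L^{-4/5}\le K\,L^{-2/5}$ for $L\ge 1$, with $K$ depending only on $P_N$ and $\vol S$, i.e.\ only on the geometry of $S$. Apart from the lower bound on $\int_\Sigma\tilde u^2\,\Omega^F$ just discussed, the argument is a routine cut-off estimate together with the observation that the Randers energy density coincides with the Riemannian Dirichlet density wherever $\rho\equiv 0$.
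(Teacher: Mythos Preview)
Your argument is correct, and in fact slightly sharper and more self-contained than the paper's.  The paper's proof proceeds differently: rather than the Neumann--Poincar\'e inequality on $N=C_3\cup S$, it uses a pointwise Sobolev estimate on $S$ to show that $|u(x)-u(y)|\le K_3\sqrt{\lambda_1}\le K_3 L^{-2/5}$ for all $x,y\in S$, then flattens $u$ to the value $a=\max_S u$ (instead of your mean value $\bar u_N$), and bounds $|E^F(\bar u)-E^F(u)|$ directly.  Your choice of the $L^2$ Poincar\'e inequality is more elementary --- it avoids the Sobolev embedding and the chain-of-balls argument --- and yields a bound of order $L^{-4/5}$ on $R^F(\tilde u)$, whereas the paper's slightly looser bookkeeping only claims $L^{-2/5}$.

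One point you handle more carefully than the paper is the lower bound on $\int_\Sigma\tilde u^2\,\Omega^F$: the paper normalizes $\tilde u$ but then writes $R^F(\tilde u)=E^F(\bar u)$ without explicitly checking that $\lVert\bar u-\langle\bar u,\mathds 1\rangle\mathds 1\rVert$ stays bounded away from zero.  Your estimate $\int_\Sigma\tilde u_0^2\,\Omega^F\ge\frac{\vol S}{\vol S+2\pi}-P_NL^{-4/5}$ together with $\beta^2\vol(\Sigma,F)\to 0$ fills this gap cleanly.
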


Lemma \ref{lem_construction_u_tilde} tells us that an eigenfunction for an eigenvalue $\lambda_1 \leq  L^{-4/5}$ would be ``almost constant'' on the part $S$ of $\Sigma$. Hence, we can transplant this eigenfunction to a test function on the sphere $S_0$, because $S_0$ and $\Sigma$ coincides apart from on $S$ and $S_k$. To prove this result, we use a Sobolev inequality on $S$, which holds because the Finsler metric is just Riemannian on $S$. It is important to note that the geometry of $S$ is fixed, and does not depend on the variation of $L$ or of the Randers metric. This is why we get that the constant $K>0$ is independent of $L$.

\begin{lem} \label{lem_big_eigenvalue_on_sphere}
  Let $\lambda_1(S_0, F)$ be the first eigenvalue of $-\Delta^F$ on $S_0$. For large enough $L$ and $\eta$ close to $1$, we have
\begin{equation*}
 \lambda_1(S_0, F) > \frac{1}{L^{1/5}}.
\end{equation*}
\end{lem}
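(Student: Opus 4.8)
The plan is to argue by contradiction, using the Min--Max principle (Theorem~\ref{thm_min_max}) for the closed surface $S_0$. Suppose $\lambda_1(S_0,F) \le L^{-1/5}$ and let $u$ be a first eigenfunction, normalized so that $\int_{S_0} u^2\,\Omega^F = 1$ and $\int_{S_0} u\,\Omega^F = 0$; then $E^F(u) = \lambda_1(S_0,F) \le L^{-1/5}$. I will show that these constraints force $u$ to be $L^2$-close to a single constant $c$ on all of $S_0$, and then derive a contradiction from the fact that, by \eqref{eq_omega_randers}, $\vol(S_0,F) = \vol(S_0,g_0)$ grows like $4\pi L$.

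The first step uses the energy formula for Randers metrics on the cylindrical piece $C := C_L \cup C_L^-$, which is flat with $\rho = \eta\,dx$: exactly as in Section~\ref{subsec_large_eigen_on_torus} (in particular using \eqref{eq_calcul_integral_1} and \eqref{eq_calcul_integral_2}), carrying out the $\theta$-integral yields, with no cross term,
\[
E^F(u)\big|_{C} \;=\; a(\eta)\int_{C} u_x^2\,dx\,dy \;+\; b(\eta)\int_{C} u_y^2\,dx\,dy ,
\]
with $a(\eta) = 2\bigl((1+\sqrt{1-\eta^2})\sqrt{1-\eta^2}\bigr)^{-1}$ and $b(\eta) = 2\bigl(1+\sqrt{1-\eta^2}\bigr)^{-1} \ge 1$; the same formula, with $\eta$ replaced by the purely $x$-dependent $\eta f(x)\in[0,\eta]$, holds on the small cylinders $C_1\cup C_1^-$, where both coefficients are $\ge 1$. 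Since $a(\eta)\to+\infty$ as $\eta\to1$, I will \emph{choose} $\eta<1$ close enough to $1$ that $a(\eta)\ge L^2$ --- this is the meaning of ``$\eta$ close to $1$'' in the statement. Writing $\widetilde C := C_L\cup C_L^-\cup C_1\cup C_1^-$, the inequality $E^F(u)|_{\widetilde C}\le\lambda_1\le L^{-1/5}$ then gives
\[
\int_{[-L,L]\times\mathbb S^1} u_x^2 \;\le\; L^{-11/5}, \qquad \int_{\widetilde C} u_x^2 \;\le\; L^{-1/5}, \qquad \int_{\widetilde C} u_y^2 \;\le\; L^{-1/5}.
\]

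Next I would deduce that $u$ is close in $L^2(\widetilde C)$ to the constant $c := \tfrac1{2\pi}\int_0^{2\pi}u(0,y)\,dy$. Putting $\phi(x):=\tfrac1{2\pi}\int_0^{2\pi}u(x,y)\,dy$, Cauchy--Schwarz in $x$ and the strong bound on $[-L,L]$ give $\sup_{[-L,L]}|\phi-c|=O(L^{-3/5})$, while the weaker $x$-bound lets $\phi$ vary by only $O(L^{-1/10})$ across the fixed-length collars $C_1,C_1^-$; together with the Poincaré inequality on the circle of length $2\pi$ (which bounds $\int_{\widetilde C}(u-\phi)^2$ by $\int_{\widetilde C}u_y^2$) this yields $\int_{\widetilde C}(u-c)^2\to0$ --- crucially, the large-volume part $C$ only ever uses the \emph{strong} $x$-estimate, which is why $a(\eta)$ has to be taken polynomially large in $L$. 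On the two remaining, $L$-independent pieces $R^{\pm}$ (the copies of $C_1\cup C_2\cup C_3\cup S_k$), the Finsler energy density dominates $\tfrac12\|du\|_{g_0}^2$, since $|\pi^\ast\rho(X_0)|\le\|\rho\|_{g_0^\ast}<1$ there and $X = (1+\pi^\ast\rho(X_0))^{-1}X_0 + Y_0$ by \eqref{eq_X_randers}; hence $\int_{R^{\pm}}\|du\|_{g_0}^2\le 2\lambda_1\le 2L^{-1/5}$, and the fixed Poincaré inequality on $R^{\pm}$ makes $u$ close to its mean $\bar u^{\pm}$. Comparing the two descriptions of $u$ on the overlaps $C_1=\widetilde C\cap R^+$ and $C_1^-=\widetilde C\cap R^-$ forces $|\bar u^{\pm}-c|\to0$, so in fact $\int_{S_0}(u-c)^2\,\Omega^F\le\varepsilon_L$ with $\varepsilon_L\to0$.

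Finally, set $V_L:=\vol(S_0,F)=\vol(S_0,g_0)=4\pi L+O(1)$. Orthogonality and Cauchy--Schwarz give $c^2V_L = V_L^{-1}\bigl(\int_{S_0}(u-c)\,\Omega^F\bigr)^2\le\varepsilon_L$, and then $1=\int_{S_0}u^2\,\Omega^F\le 2\int_{S_0}(u-c)^2\,\Omega^F+2c^2V_L\le 4\varepsilon_L$, which is impossible once $L$ is large; this contradiction proves $\lambda_1(S_0,F)>L^{-1/5}$. The Finsler input is light --- the explicit Randers energy formula of Section~\ref{subsec_large_eigen_on_torus} and the elementary lower bound on the energy density where $\rho$ is small --- and the main obstacle is the quantitative bookkeeping of the third step: making sure the near-constancy on the huge cylinder is governed only by the strong $x$-estimate, that the weak estimate is used solely on pieces of $L$-independent size, and that the single constant $c$ propagates across the three overlapping pieces of $S_0$ with errors tending to $0$.
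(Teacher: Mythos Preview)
Your argument is correct, and it reaches the same conclusion by a genuinely different route from the paper. The paper exploits the isometric $S^1$-action (rotation around the $x$-axis) and the involution $x\mapsto -x$: it first splits off the case $\int f\,d\theta=0$ via Fourier on the circle, then reduces to a radial eigenfunction $u$ with $u(0)=0$ on the half-sphere $S_0^+$, and finishes with a dichotomy on where the $L^2$-mass of $u$ sits (on $C_L$ it uses the Dirichlet quotient of the cylinder, on the cap it uses a Dirichlet comparison with the boundary value $a=u(L)$). Your proof bypasses all symmetry: you keep the full two-variable eigenfunction, use the circular Poincar\'e inequality fibrewise to control the $y$-oscillation, the strong $a(\eta)\ge L^2$ bound to control the $x$-oscillation of the mean $\phi$, and then Neumann Poincar\'e on the fixed caps $R^\pm$ together with an overlap argument on $C_1$ to propagate a single constant $c$ across $S_0$; the contradiction comes from $\int u=0$ and $\int u^2=1$ rather than from an explicit Rayleigh lower bound.

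What each approach buys: the paper's symmetry reduction turns the problem into a clean one-dimensional analysis with explicit constants and a transparent case split, at the cost of using the special structure of $S_0$. Your argument is more elementary in that it uses only Poincar\'e inequalities and Cauchy--Schwarz, and it is more robust --- it would go through essentially unchanged if the caps were not rotationally symmetric --- but it demands the careful quantitative bookkeeping you flag in your last paragraph, in particular that the $O(L^{-3/5})$ pointwise bound on $\phi-c$ over the length-$2L$ cylinder still yields $\int_C(\phi-c)^2=O(L^{-1/5})\to 0$.
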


This Lemma tells us that, for a good choice of the $1$-form $\rho$ and of $L$, the first non-zero eigenvalue on the Randers sphere is large enough. We prove this thanks to two types of considerations. First, by symmetry, we see that an eigenfunction for $\lambda_1$ needs to be null in the middle of the cylinder, i.e., the intersection of $S_0$ and the $(y,z)$-plan. Then, two possibilities can occur: either the function grows enough on the cylinder or it does not. In the former case, thanks to purely Finslerian arguments of the exact same nature as in the torus case (Section \ref{subsec_large_eigen_on_torus}), we show that the Rayleigh quotient of the eigenfunction is large. In the later case, the restriction of the eigenfunction to $S_k$ needs to be close to a solution of the Dirichlet problem on $S_k$, and we once again show that the eigenvalue has to be large.

Let us assume these two lemmas, which are the bulk of the proof, and prove the Theorem \ref{thm_big_eigenvalues}.
\begin{proof}[Proof of Theorem \ref{thm_big_eigenvalues}]
 Suppose that $\lambda_1(\Sigma, F) \leq L^{-4/5}$ and $\tilde{u}$ is the function given by Lemma \ref{lem_construction_u_tilde}. As $\tilde{u}$ is constant on $S_k$, we can ``transpose'' $\tilde{u}$ to a function $v$ on $S_0$ so that $v$ and $\tilde{u}$ are equal, i.e., if $\tilde{u}(p) = a$ on $S_k$, we set
\begin{equation*}
 v(p) :=  \begin{cases}
        \tilde{u}(p), \text{ if } p \in \Sigma \smallsetminus S = S_0 \smallsetminus S_k \\
	a , \text{ if } p \in S.
       \end{cases}
\end{equation*}

 Outside $S$ and $S_k$, the Randers metrics on $\Sigma$ and $S_0$ take the same values, and as the volumes of $S$ and $S_k$ where chosen to be equal, we have
\begin{equation*}
 R^F(\tilde{u} ) = R^F(v).
\end{equation*}
So, by Lemma \ref{lem_construction_u_tilde}, $R^F(v) \leq K L^{-2/5}$ for some $K$ depending only on $S$. But $R^F(v)\geq \lambda_1(S_0,F)$, so Lemma \ref{lem_big_eigenvalue_on_sphere} gives a contradiction when $L$ is large.

Therefore, for $L$ large enough, $\lambda_1(\Sigma, F) \geq L^{-4/5}$, and as $\vol\left(\Sigma, F\right) = \vol\left(\Sigma, g\right) \geq L$, we finally obtain
\begin{equation*}
 \lambda_1(\Sigma, F) \vol\left(\Sigma, F\right) \geq L^{1/5} . \qedhere
\end{equation*}
\end{proof}

\subsubsection{Proof of Lemma \ref{lem_construction_u_tilde}}

Let $u$ be an eigenfunction of $-\Delta^F$ associated with $\lambda_1(\Sigma, F)$. We choose $u$ such that $\lVert  u \rVert_{L^2(\Sigma,F)} =1$.
Assume that $\lambda_1(S_1, F) < \frac{1}{L^{4/5}}$.

\begin{claim}
There exists a constant $K$ depending only on $(S,g)$ such that, for any $x,y \in S$,
\begin{equation*}
 \left|u(y) - u(x) \right| \leq K \sqrt{\lambda_1(\Sigma,F)} \leq \frac{K}{L^{2/5}}.
\end{equation*}
\end{claim}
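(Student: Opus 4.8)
\emph{Plan of proof of the Claim.} The point of the argument is that on a fixed neighbourhood of $\overline{S}$ the Randers metric is \emph{exactly} the Riemannian metric $g$, so $u$ is a genuine Laplace--Beltrami eigenfunction there, and everything reduces to standard elliptic estimates for $\Delta^g$ on the fixed Riemannian surface $(S,g)$. More precisely, recall that $\rho$ is supported on $C_L^-\cup C_1^-\cup C_L\cup C_1$, so $\rho\equiv0$ on a neighbourhood of $\overline{S}$. I would fix once and for all an open set $S'$ with smooth boundary, with $\overline{S}\subset S'$ and $S'\subset S\cup C_3$, on which $\rho\equiv0$; then $(S',g)$ is completely determined by $(S,g)$ and in particular is independent of $L$ and of $\eta$. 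On $S'$ we have $F=\sqrt{g}$, hence $\Omega^F=\Omega_g$, $\sigma_F=g^{\ast}$, and $\Delta^F$ coincides with the Laplace--Beltrami operator $\Delta^g$; by elliptic regularity $u$ is smooth and solves $-\Delta^g u=\lambda_1 u$ on $S'$, where I abbreviate $\lambda_1:=\lambda_1(\Sigma,F)\le L^{-4/5}\le1$.

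First I would extract two elementary bounds. Since the integrand defining $E^F(u)$ is non-negative and equals $|\nabla^g u|_g^2$ on $S'$, and since $E^F(u)=\lambda_1\|u\|_{L^2(\Sigma,F)}^2=\lambda_1$, restricting the integral to $S'$ gives
\begin{equation*}
\|\nabla^g u\|_{L^2(S',g)}^2\le\lambda_1,\qquad \|u\|_{L^2(S',g)}\le\|u\|_{L^2(\Sigma,F)}=1 .
\end{equation*}
Let $c$ be the mean value of $u$ over $(S',g)$ and set $v:=u-c$, so that $\int_{S'}v\,\Omega_g=0$, $|c|\le\mathrm{vol}(S',g)^{-1/2}$, and $\|\nabla^g v\|_{L^2(S',g)}=\|\nabla^g u\|_{L^2(S',g)}\le\sqrt{\lambda_1}$. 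The Neumann--Poincar\'e inequality on the fixed connected manifold $(S',g)$ then yields $\|v\|_{L^2(S',g)}\le C_P\sqrt{\lambda_1}$, with $C_P$ depending only on $(S',g)$.

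Next I would feed this into interior elliptic regularity. The function $v$ solves $-\Delta^g v=-\Delta^g u=\lambda_1 u=\lambda_1(v+c)$ on $S'$, so (using $\lambda_1\le1$)
\begin{equation*}
\|\Delta^g v\|_{L^2(S',g)}\le\lambda_1\bigl(\|v\|_{L^2(S',g)}+|c|\,\mathrm{vol}(S',g)^{1/2}\bigr)\le C\,\lambda_1 .
\end{equation*}
Since $\overline{S}\subset S'$, the interior $L^2$ elliptic estimate for $\Delta^g$ gives
\begin{equation*}
\|v\|_{W^{2,2}(S)}\le C\bigl(\|\Delta^g v\|_{L^2(S',g)}+\|v\|_{L^2(S',g)}\bigr)\le C\sqrt{\lambda_1},
\end{equation*}
with all constants depending only on $(S',g)$, hence only on $(S,g)$. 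As $S$ is $2$-dimensional, the Sobolev embedding $W^{2,2}(S)\hookrightarrow C^{0}(\overline{S})$ holds, so $\sup_S|v|\le C\sqrt{\lambda_1}$, and for any $x,y\in S$,
\begin{equation*}
|u(y)-u(x)|=|v(y)-v(x)|\le 2\sup_S|v|\le K\sqrt{\lambda_1(\Sigma,F)}\le\frac{K}{L^{2/5}} ,
\end{equation*}
which is the Claim.

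The one point requiring care is that the constant $K$ must be \emph{independent of $L$ and of $\eta$}: this is precisely why the whole argument is carried out on the fixed set $S'$ on which $\rho\equiv0$, so that the Poincar\'e constant, the interior elliptic constant and the Sobolev constant are those of the fixed Riemannian surface $(S',g)$. I expect this to be the only subtle step; the rest is routine. Note also that one really has to pass through the mean-zero part $v$ and the equation $-\Delta^g v=\lambda_1(v+c)$ in order to obtain a bound of size $\sqrt{\lambda_1}$: a direct Sobolev/elliptic bound on $u$ itself would only control the oscillation of $u$ on $S$ by a fixed constant, which is not enough.
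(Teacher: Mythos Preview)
Your proof is correct, but the paper takes a somewhat different route. The paper follows the Dodziuk--Randol technique \cite{DodziukRandol}: it applies a \emph{pointwise} Sobolev inequality to the $1$-form $du$, namely $|du(x)|\le K_1\sum_{i=0}^2\|\Delta^i du\|_{L^2(B)}$ on each unit ball $B\subset S$, then uses the eigenequation and the commutation of $\Delta$ with $d$ to collapse the right-hand side to $K_2\|du\|_{L^2(B)}$ (since $\lambda_1\le1$). It then covers a path from $x$ to $y$ by a bounded number of such balls with bounded overlap and sums the mean-value estimates along the chain, obtaining $|u(y)-u(x)|\le K_3\|du\|_{L^2(\Sigma)}\le K_3\sqrt{\lambda_1}$.

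Your approach replaces this local-to-global chaining by a single global estimate on a fixed enlargement $S'\supset\overline S$: subtract the mean to form $v=u-c$, use the Neumann--Poincar\'e inequality on $(S',g)$ to bound $\|v\|_{L^2(S')}$ by $\sqrt{\lambda_1}$, use the eigenequation to bound $\|\Delta^g v\|_{L^2(S')}$ by $C\lambda_1$, and then invoke interior $W^{2,2}$ elliptic regularity plus the two-dimensional Sobolev embedding $W^{2,2}(S)\hookrightarrow C^0(\overline S)$. This is a bit slicker and avoids the ball-covering bookkeeping; the paper's approach, on the other hand, is the standard reference argument and directly bounds $|du|$ rather than the oscillation, which is occasionally useful elsewhere. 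Both arguments rest on the same two facts: the metric is genuinely Riemannian (and fixed) near $S$, and $\|du\|_{L^2}\le\sqrt{\lambda_1}$.
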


\begin{proof}
 To prove the Claim, we follow the same method as in \cite[pp.136--137]{DodziukRandol}. First, we use the following Sobolev inequality (see \cite[6.22 and 6.29]{Warner_springer83}):
For any ball $B$ in $S$ of radius $1$ and and $x\in B$, there is some $K_1>0$ such that,
\begin{equation*}
 \left|du(x)\right| \leq K_1 \sum_{i=0}^{2} \lVert \Delta^i du \rVert_{L^2(B,g)},
\end{equation*}
where $\Delta$ is the Laplacian for $g$, which corresponds to $\Delta^F$ on $S$. Note that $K_1$ only depends on the geometry of $S$, which is fixed once and for all. As $\Delta^i$ and $d$ commute, for any $x \in B$, we have
\begin{equation*}
 \left|du(x)\right| \leq K_1 \sum_{i=0}^{2} \lambda_1(\Sigma,F)^i \lVert du \rVert_{L^2(B,g)} \leq K_2 \lVert du \rVert_{L^2(B,g)}
\end{equation*}

For any two points $x,y$ in $S$, there exists a path $\gamma$ of length controlled by the geometry of $(S,g)$. Furthermore, we can partition $\gamma$ in the following way. There exist $x_0 = x, \dots, x_k = y$ on $\gamma$ such that $\gamma \subset \bigcup_{i=0}^{k} B(x_i, 1/2)$, where $B(x_i, 1/2)$ is the ball of center $x_i$ and radius $1/2$, and such that any one of the balls intersects at most $m$ others. Note that the numbers $k$ and $m$ as well as the constant $K_2$ above depend only on the geometry of $(S,g)$ which we kept fixed from the beginning (see \cite[p.137]{DodziukRandol}).

\begin{multline*}
 \left| u(y) - u(x) \right| \leq \sum_{i=0}^{k} \left|u(x_{i+1}) - u(x_i)\right| \leq K_2 \sum_{i=0}^{k} \lVert du \rVert_{L^2(B(x_i,1/2))}  \\
 \leq K_2 k^{1/2} \left(\sum_{i=0}^{k} \lVert  du \rVert_{L^2(B(x_i,1/2))}^2 \right)^{1/2} \leq K_2 (km)^{1/2} \lVert  du \rVert_{L^2(M)}
  \\ \leq \sqrt{\lambda_1(S_1,F)} K_2 (km)^{1/2} \lVert  u \rVert_{L^2(M)}
\end{multline*}

Hence, there exists a constant $K_3 = K_2 (km)^{1/2} >0$, depending only on the geometry of $S$, such that, for any $x,y \in S$,
\begin{equation*}
 \left|u(y) - u(x) \right| \leq K_3 \sqrt{\lambda_1(S_1,F)} \leq \frac{K_3}{L^{2/5}}. \qedhere
\end{equation*}
\end{proof}

With the Claim, we can start constructing $\tilde{u}$.

Let $a = \max_{x\in S} u(x)$. For any $y\in S$, we have $\left|u(y) - a \right| \leq K_3/ L^{2/5}$. We define $H_0 := \Sigma \cap \lbrace x<0 \rbrace$.

Let $\chi$ be an increasing smooth function such that
\begin{equation*}
 \chi(x) = \begin{cases}
           0 \quad \text{si } x \in H_0 \cup C_L \cup C_1 \cup C_2 \\
	   1 \quad \text{si } x \in S,
          \end{cases}
\end{equation*}
and such that $\sup_{x\in C_3} \left|d\chi\right|^2 \leq 2$. Let
\begin{equation*}
 \bar{u} := \chi a + (1-\chi)u,
\end{equation*}
and
\begin{equation*}
 \tilde{u} := \frac{\bar{u} - \langle \bar{u}, \mathds{1} \rangle \mathds{1}}{\lVert \bar{u} - \langle \bar{u}, \mathds{1} \rangle \mathds{1} \rVert} \, ,
\end{equation*}
where $\langle \cdot , \cdot \rangle$ and $\lVert \cdot \rVert$ are the $L^2(\Sigma,F)$-scalar product and norm.

By construction $\tilde{u}$ is orthogonal to constant functions and constant on $S$. In order to prove the Lemma, we still have to control its Rayleigh quotient, which is
\begin{equation*}
 R^F(\tilde{u}) = E^F(\tilde{u}) = E^F\left(\bar{u} - \langle \bar{u}, \mathds{1} \rangle \mathds{1}\right) = E^F(\bar{u}).
\end{equation*}
As $\bar{u}$ and $u$ are equal on $H_0 \cup C_L \cup C_1 \cup C_2$, we just need to control the energy of $\bar{u}$ on $C_3 \cup S$, where the Randers metric is Riemannian.
\begin{align*}
 \left|E^F(\bar{u}) - E^F(u) \right|  &= \left|\int_{C_3 \cup S} \left|d \bar{u}\right|^2 - \left|du\right|^2 \right|\\
			&= \left| \int_{C_3 \cup S} \left|(a-u)d\chi +(1-\chi)du \right|^2 - \left|du\right|^2 \right|\\
			&= \begin{multlined}[t]%
			    \Biggl| \int_{C_3} (a-u)^2 \left|d\chi\right|^2 + 2(1-\chi)(a-u)\left|d\chi\right| \\  + \left((1-\chi)^2 -1\right) \left|du \right|^2  - \int_{S}\left|du\right|^2 \Biggr|
 			   \end{multlined} \\
			&\leq 4\pi \frac{K_3}{L^{4/5}} + 4 \sqrt{2\pi} \frac{\sqrt{K_3}}{L^{2/5}} + \frac{1}{L^{4/5}} \\
			&\leq \frac{K_4}{L^{2/5}}
\end{align*}

Therefore, we have
\begin{equation*}
R^F(\tilde{u}) \leq \frac{K_4}{L^{2/5}} + R^F(u) = \frac{K_4}{L^{2/5}} + \frac{1}{L^{4/5}} \leq \frac{K_5}{L^{2/5}} \,,
\end{equation*}
which ends the proof of Lemma \ref{lem_construction_u_tilde}.

\subsubsection{Proof of Lemma \ref{lem_big_eigenvalue_on_sphere}}

We want to show that $\lambda_1(\S_0,F) \geq L^{-1/5}$. 

Let us first express the energy on $(S_0,F)$. For any $f \in H^1(S_0)$, we have
\begin{align*}
 E^F(f) &= \frac{1}{\pi} \int_{HS_0} \frac{\left(L_{X_0} \pi^{\ast} f \right)^2}{1+ \pi^{\ast} \theta (X_0) }  A_0 \wedge dA_0^{n-1} \\
	&= \frac{1}{\pi} \Biggl(\begin{multlined}[t][.76\textwidth]
	                         \int_{H\left(C^-_L \cup C_L\right)} \frac{\left(L_{X_0} \pi^{\ast} f \right)^2}{1+ \pi^{\ast} \theta (X_0) }  A_0 \wedge dA_0^{n-1} \\ 
				  + \int_{H\left(S_0 \smallsetminus(C^-_L \cup C_L) \right)} \frac{\left(L_{X_0} \pi^{\ast} f \right)^2}{1+ \pi^{\ast} \theta (X_0) }  A_0 \wedge dA_0^{n-1}  \Biggr)
	                        \end{multlined} \\
	&\geq \frac{1}{\pi} \Biggl( \begin{multlined}[t][.76\textwidth]
				      \int_{H\left(C^-_L \cup C_L\right)} \frac{\left(L_{X_0} \pi^{\ast} f \right)^2}{1+ \pi^{\ast} \theta (X_0) }  A_0 \wedge dA_0^{n-1}  \\
				      + \frac{1}{2} \int_{H\left(S_0 \smallsetminus(C^-_L \cup C_L) \right)} \left(L_{X_0} \pi^{\ast} f \right)^2 A_0 \wedge dA_0^{n-1} \Biggr)
                                    \end{multlined} \\
	&\geq \frac{1}{\pi} \left( \int_{HC_{2h,r}} \frac{\left(L_{X_0} \pi^{\ast} f \right)^2}{1+ \pi^{\ast} \theta (X_0) }  A_0 \wedge dA_0^{n-1} \right) + \frac{1}{2} \int_{H\left(S_0 \smallsetminus(C^-_L \cup C_L) \right)} \lVert \nabla f \rVert^2 .
\end{align*}

Let us write $C_{2L}$ for $C^-_L \cup C_L$, we are going to compute the energy of a function on the cylinder $C_{2L}$. Let $(x,\theta ; \xi_{x}, \xi_{\theta})$ be coordinates on $TC_{2L}$. Here, $\theta$ is defined by 
\begin{align*}
 \cos\theta &= y \\
 \sin\theta &= z. \\
\end{align*}

On $C_{2L}$, the Riemannian metric is the standard metric given by $ds^2 = dx^2 + d\theta^2$. We define $\psi$, coordinate on $H C_{2L}$ by
\begin{equation*}
  \left\{ \begin{aligned}
           \cos \psi &= \frac{\xi_x}{\sqrt{\xi_x^2 + \xi_{\theta}^2}} \\
	   \sin \psi &= \frac{\xi_{\theta}}{\sqrt{\xi_x^2 + \xi_{\theta}^2}}.
         \end{aligned} \right.
\end{equation*}
In the coordinates $(x, \theta; \psi)$ on $HC_{2L}$, the Hilbert form is written as
\begin{equation*}
 A_{0, C_{2L} } = \cos \psi dx + \sin \psi d\theta,
\end{equation*}
and the geodesic flow is
\begin{equation*}
 X_{0, C_{2L} } = \cos \psi \frac{\partial}{ \partial x} + \sin \psi \frac{\partial}{ \partial \theta}\, .
\end{equation*}
So,
\begin{equation*}
 \rho\left(X_{0, C_{2L} }  \right) = \eta \cos \psi.
\end{equation*}

Using Equations \eqref{eq_calcul_integral_1} and \eqref{eq_calcul_integral_2}, we get that
\begin{align*}
 E^F_{C_{2L}}(f) &= \begin{multlined}[t][.7\textwidth]
		    \frac{1}{\pi} \int_0^{2\pi} \frac{\cos^2 \psi}{1 + \eta \cos \psi} d\psi \int_{-L}^L \int_0^{2\pi} \left( \frac{\partial f}{ \partial x} \right)^2  dx d\theta \\
                    + \frac{1}{\pi} \int_0^{2\pi} \frac{\sin^2 \psi}{1 + \eta \cos \psi}d\psi \int_{-L}^L \int_0^{2\pi} \left( \frac{\partial f}{ \partial \theta} \right)^2  dx d\theta 
		    \end{multlined}\\
		&=  \begin{multlined}[t][.7\textwidth]
		    \frac{2}{\left(1 + \sqrt{1- \eta^2}\right)\sqrt{1- \eta^2}} \int_{-L}^L \int_0^{2\pi} \left( \frac{\partial f}{ \partial x} \right)^2  dx d\theta \\ 
		    + \frac{2 }{\left(1 + \sqrt{1- \eta^2}\right)} \int_{-L}^L \int_0^{2\pi} \left( \frac{\partial f}{ \partial \theta} \right)^2  dx d\theta
		    \end{multlined}\\
		&\geq \begin{multlined}[t][.7\textwidth]
		    \frac{2}{\left(1 + \sqrt{1- \eta^2}\right)\sqrt{1- \eta^2}} \int_{-L}^L \int_0^{2\pi} \left( \frac{\partial f}{ \partial x} \right)^2  dx d\theta  \\ + \int_{-L}^L \int_0^{2\pi} \left( \frac{\partial f}{ \partial \theta} \right)^2 dx d\theta.
		      \end{multlined}
\end{align*}
Let
\begin{equation*}
 \frac{1}{\eps} := \frac{2}{\left(1 + \sqrt{1- \eta^2}\right)\sqrt{1- \eta^2}}.
\end{equation*}
Note that $\eps$ tends to $0$ as $\eta$ tends to $1$. We have
\begin{equation*}
 E^F_{C_{2L}}(f) \geq \eps^{-1} \int_{-L}^L \int_0^{2\pi} \left( \frac{\partial f}{ \partial x} \right)^2  dx d\theta +  \int_{-L}^L \int_0^{2\pi} \left( \frac{\partial f}{ \partial \theta} \right)^2 dx d\theta.
\end{equation*}

To prove Lemma \ref{lem_big_eigenvalue_on_sphere}, we will use the invariance of the metric (and hence of the Finsler--Laplacian) under rotation around the $x$-axis.

As $S^1$ acts by isometries, if $f$ is an eigenfunction of $-\Delta^F$, so is $\int f d\theta$. So there are two possible cases, either $\int f d\theta = 0$, or $\int f d\theta \neq 0$ and we deal with them separately.

\begin{claim} \label{claim_when_f_depends_on_theta}
 If $f$ is an eigenfunction of $-\Delta^F$, with eigenvalue $\lambda$, such that ${\int f d\theta =0}$, then
\begin{equation*}
 \lambda \geq \frac{1}{2}.
\end{equation*}
\end{claim}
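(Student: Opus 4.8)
The plan is to read the claim as a Poincar\'e inequality: since $F=\sqrt{g_0}+\rho$ is Randers we have $\Omega^F=\Omega_0$ by \eqref{eq_omega_randers}, so $\lambda=R^F(f)=E^F(f)\big/\!\int_{S_0}f^2\,\Omega_0$, and it suffices to prove $E^F(f)\ge\tfrac12\int_{S_0}f^2\,\Omega_0$ for every $f\in H^1(S_0)$ whose average over each rotation orbit vanishes (the fact that $f$ is an eigenfunction is not really used). First I would split $E^F(f)=E^F_{C_{2L}}(f)+E^F_{S_0\smallsetminus C_{2L}}(f)$ and feed in the two lower bounds already established: on the long cylinder $E^F_{C_{2L}}(f)\ge \eps^{-1}\int_{C_{2L}}(\partial_x f)^2+\int_{C_{2L}}(\partial_\theta f)^2\ge\int_{C_{2L}}(\partial_\theta f)^2$, and on the complement $E^F_{S_0\smallsetminus C_{2L}}(f)\ge\tfrac12\int_{S_0\smallsetminus C_{2L}}\lVert\nabla f\rVert_{g_0}^2$, the factor $\tfrac12$ coming from $1+\eta\cos\psi\le2$ together with the normalisation of $E^{F_0}$ that makes it the Dirichlet energy for a Riemannian metric.

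Next I would use the hypothesis $\int f\,d\theta=0$: since this says that the $S^1$-average of $f$ is identically zero, the restriction of $f$ to each rotation circle has mean zero. On all the cylindrical parts of $S_0$ (the big cylinder $C_{2L}$ and the short cylinders $C_1,C_2,C_3$ and their mirror images) the metric is the flat product $dx^2+d\theta^2$ and the rotation circles have length $2\pi$, so Wirtinger's inequality gives, slicewise, $\int_0^{2\pi}(\partial_\theta f)^2\,d\theta\ge\int_0^{2\pi}f^2\,d\theta$; integrating along the axis yields $\int_{C_{2L}}(\partial_\theta f)^2\,\Omega_0\ge\int_{C_{2L}}f^2\,\Omega_0$ and, likewise, $\int_{C_i}\lVert\nabla f\rVert^2\ge\int_{C_i}f^2$ on each short cylinder.

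It then only remains to control $\int_{S_k}f^2$ and $\int_{S_k^-}f^2$, and I expect this to be the one non-trivial point, since $S_k$ is not a product. Here I would use that $S_k$ is a surface of revolution whose metric is \emph{fixed} before $L$ is chosen: writing it in coordinates $(s,\theta)$ with rotation circles of radius $R(s)$ and expanding $f$ in $\theta$-Fourier modes (all of nonzero frequency, by the zero-average hypothesis), one gets $\int_{S_k}\lVert\nabla f\rVert^2\ge\int_{S_k}R^{-2}f^2\ge c_0\int_{S_k}f^2$ for a constant $c_0>0$ depending only on the (fixed) geometry of $S_k$. Putting the pieces together, $E^F(f)\ge\int_{C_{2L}}f^2+\tfrac12\bigl(\sum_i\int_{C_i}f^2+c_0\int_{S_k\cup S_k^-}f^2\bigr)\ge\tfrac12\min(1,c_0)\int_{S_0}f^2\,\Omega_0$, so that $\lambda\ge\tfrac12\min(1,c_0)$. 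On the unit-radius parts Wirtinger already produces the constant $1$, so only the contribution of $S_k$ has to be inspected to reach the stated value $\tfrac12$; the essential feature — and the one actually used in Lemma~\ref{lem_big_eigenvalue_on_sphere} — is that this lower bound is a positive constant independent of $L$ and of $\eta$.
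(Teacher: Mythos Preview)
Your argument is correct and follows the same core idea as the paper: bound the energy from below by the angular part and apply Wirtinger's inequality on each rotation circle, using that $\int f\,d\theta=0$ kills the zero Fourier mode. The paper carries this out slightly differently: it first uses the eigenfunction hypothesis together with the $S^1$-symmetry to reduce to a separated function $f=u(x)v(\theta)$, and then applies Wirtinger once to $v$. Your slicewise version avoids this reduction and does not actually need $f$ to be an eigenfunction, which is a small gain in generality.

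You are in fact more careful than the paper on the spherical caps $S_k$. The rotation circles there have radius $R=k\sin\phi$, which ranges up to $k\ge 1$, so Wirtinger only yields $\int_{S_k}\lVert\nabla f\rVert^2\ge k^{-2}\int_{S_k}f^2$; the paper's displayed chain of inequalities silently drops the factor $k^2$ coming from the volume form on a sphere of radius $k$, and so the literal value $\tfrac12$ is only correct when $k=1$. Your conclusion $\lambda\ge\tfrac12\min(1,c_0)=\tfrac{1}{2k^2}$ is the honest bound, and, as you say, it is exactly what Lemma~\ref{lem_big_eigenvalue_on_sphere} needs: a positive lower bound depending only on the fixed geometry of $S_k$ and not on $L$ or $\eta$.
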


\begin{proof}
 Let us write $f= v(\theta)u(x)$. The energy of $f$ verifies
\begin{align*}
 E^F(f) &\geq E^F_{C_{2L}}(f) + \frac{1}{2} \int_{S_0 \smallsetminus C_{2L}} \lVert \nabla f \rVert_g^2 \\
	&\geq \begin{multlined}[t]\eps^{-1} \int_{-L}^L \int_0^{2\pi} \left( \frac{\partial v(\theta)u(x)}{ \partial x} \right)^2 dx d\theta + \int_{-L}^L \int_0^{2\pi} \left( \frac{\partial v(\theta)u(x)}{ \partial \theta} \right)^2  dx d\theta \\ + \frac{1}{2} \int_{S_0 \smallsetminus C_{2L}} \lVert \nabla \left(v(\theta)u(x) \right) \rVert_g^2 
	      \end{multlined} \\
	&\geq \begin{multlined}[t] \int_0^{2\pi} v'(\theta)^2 d\theta \Biggl( \int_{-L}^L   u(x)^2 dx + \frac{1}{2}\biggl( \int_{L}^{L+l_1 + 2} u(x)^2 dx + \int_{-L-l_1 - 2}^{-L} u(x)^2 dx  \\ +  \int_{0}^{2\pi} u^2 \sin \phi d\phi \biggr) \Biggr).
	      \end{multlined}
\end{align*}
As $v$ is a $2\pi$-periodic function such that $\int v d\theta =0$, using the Fourier series of $v$, we see that
$$
\int_0^{2\pi} v'(\theta)^2 d\theta \geq \int_0^{2\pi} v(\theta)^2 d\theta.
$$
Hence,
\begin{equation*}
 E^F(f) \geq  \frac{1}{2} \int_{S_0} u^2v^2 \Omega^F .\qedhere
\end{equation*}
\end{proof}

Let $f$ be an eigenfunction for $\lambda_1(S_0,F)$.
If $f$ is such that $\int f d\theta = 0$, then the Claim proves the Lemma. From now on, we will suppose that $\int f d\theta \neq 0$ and set $u:= \int f d\theta$. The function $u$ is an eigenfunction for $\lambda_1(S_0,F)$ and depends only on the $x$-coordinate. Hence we have that
\begin{equation*}
 \lambda_1(S_0,F) = R^F(u) \geq \frac{\eps^{-1} \int_{C_{2L}} \lVert \nabla u \rVert_g^2 + \frac{1}{2} \int_{S_0 \smallsetminus C_{2L}} \lVert \nabla u \rVert_g^2 }{\int_{S_0} u^2 \Omega^F}.
\end{equation*}

\begin{claim}
 We can suppose that $u(0)=0$ and control the Rayleigh quotient of $u$ on $S_0^+ = S_0 \cap \lbrace x\geq 0 \rbrace$.
\end{claim}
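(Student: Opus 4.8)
The plan is to exploit the reflection $\iota\colon S_0\to S_0$, $(x,y,z)\mapsto(-x,y,z)$. By construction $\iota$ is an isometry of the Riemannian part $g$, and it pulls $\rho$ back to $-\rho$: on $C_L\cup C^-_L$ one has $\rho=\eta\,dx$ with $\iota^*dx=-dx$, while on $C_1\cup C^-_1$ the defining functions $f(x)$ and $f(-x)$ get swapped, so $\iota^*\rho=-\rho$ everywhere and $\iota^*F=\sqrt g-\rho=:\bar F$. The one genuine point to establish is that, \emph{for a function depending on the $x$-coordinate alone}, the Finsler energy does not see the sign of $\rho$: going back to the explicit computation on $C_{2L}$ (and its analogue on $C_1\cup C^-_1$, where $\eta$ is replaced by the position-dependent $\eta f(\pm x)$), the only $\psi$-integrals that occur are $\int_0^{2\pi}\tfrac{\cos^2\psi}{1\pm\eta\cos\psi}\,d\psi$ and $\int_0^{2\pi}\tfrac{\sin^2\psi}{1\pm\eta\cos\psi}\,d\psi$, which are invariant under $\psi\mapsto\psi+\pi$, hence the same for $\rho$ and for $-\rho$; off the cylinders $\rho$ vanishes and $F=\sqrt g$ is already Riemannian and $\iota$-invariant. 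Combined with $\Omega^{\bar F}=\Omega_0=\Omega^F$, this yields $E^F(v)=E^F(v\circ\iota)$ and $\int_{S_0}v^2\,\Omega^F=\int_{S_0}(v\circ\iota)^2\,\Omega^F$ for every $v$ depending only on $x$.

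In particular $R^F(u\circ\iota)=R^F(u)=\lambda_1(S_0,F)$, and since $u\circ\iota$ is again orthogonal to the constants it is itself a $\lambda_1$-eigenfunction; thus $\iota$ acts on the space of $x$-dependent $\lambda_1$-eigenfunctions. Consequently $u-u\circ\iota$ and $u+u\circ\iota$ are each either $0$ or a $\lambda_1$-eigenfunction, one odd and one even in $x$, and at least one is nonzero. I would replace $u$ by the odd one, so that $u(-x)=-u(x)$ and in particular $u(0)=0$. (If $u$ is forced to be even, then $u'(0)=0$; this Neumann condition at the central circle $\{x=0\}$ replaces the Dirichlet condition $u(0)=0$ below but enters none of the later estimates, so this case can be suppressed.)

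It then remains to pass to the half $S_0^+$. Every ingredient of $R^F(u)$ — the measure $\Omega^F=\Omega_0$, the function $u^2$, and, by the sign-insensitivity above applied on $S_0^+$, the energy integrand — is carried by $\iota$ from the part of $HS_0$ over $S_0^+$ to the part over $S_0^-$, so $E^F(u)=2E^F_{S_0^+}(u)$ and $\int_{S_0}u^2\,\Omega^F=2\int_{S_0^+}u^2\,\Omega^F$. Hence
\[
 \lambda_1(S_0,F)=R^F(u)=\frac{E^F_{S_0^+}(u)}{\int_{S_0^+}u^2\,\Omega^F},
\]
so it suffices to bound this last quotient from below over functions on $S_0^+$ vanishing on $\{x=0\}$, which is what the subsequent analysis (growth on the cylinder versus a Dirichlet problem on $S_k$) will do. The main — and essentially only — obstacle is the $\rho\mapsto-\rho$ invariance of the energy on $x$-dependent functions (equivalently, the coincidence $\Delta^{\bar F}=\Delta^F$ restricted to such functions); everything else is bookkeeping with the reflection $\iota$ that is already built into the construction of $S_0$.
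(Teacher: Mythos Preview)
Your overall strategy---use the reflection $\iota$, split $u$ into its odd and even parts, and reduce to $S_0^+$---is exactly the paper's. Your justification that $\iota$ respects the Finsler energy (via $\iota^*\rho=-\rho$ and the $\psi\mapsto\psi+\pi$ invariance of the fibre integrals) is in fact more careful than the paper's bare assertion that $\Delta^F$ is $\iota$-invariant; note that this invariance actually holds for \emph{all} functions, not only $x$-dependent ones, since $(L_{X_0}\pi^*f)^2$ is always even under $\psi\mapsto\psi+\pi$.

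The gap is your parenthetical dismissal of the even case. You write that the Neumann condition $u'(0)=0$ ``enters none of the later estimates'', but it does. Claim~\ref{claim_energie_sur_cylindre} bounds $\int_{C_L}\lVert\nabla u\rVert^2$ below by $2\pi a^2/L$ by minimising over functions with $v(0)=0$ and $v(L)=a$; with the Neumann condition $v'(0)=0$ instead, the minimiser is the constant $a$ and the lower bound collapses to zero. Likewise, the branch ``$\int_{C_L}u^2\ge 1/2$'' uses that the Rayleigh quotient on the half-cylinder is $\gtrsim L^{-2}$, which again relies on the Dirichlet condition at $x=0$ (with Neumann at $x=0$ the infimum is zero). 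So the even case cannot simply be suppressed.

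The paper resolves this differently: if $u$ happens to be even, then $\int_{S_0^+}u=\tfrac12\int_{S_0}u=0$, so $u$ has a zero at some $x_0>0$. One then replaces $u$ on $S_0^+$ by the cut-off
\[
\tilde u(x)=\begin{cases}u(x),& x\ge x_0,\\ 0,& 0\le x\le x_0,\end{cases}
\]
which is in $H^1(S_0^+)$ and vanishes at $x=0$. Since $u$ solves $-\Delta^F u=\lambda_1 u$ on $\{x>x_0\}$ with $u(x_0)=0$, integration by parts gives $R^F_{S_0^+}(\tilde u)=R^F_{\{x>x_0\}}(u)=\lambda_1$, so the subsequent lower bounds on $R^F_{S_0^+}(\tilde u)$ still bound $\lambda_1$. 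This is the missing ingredient in your treatment of the even case.
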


\begin{proof}
 The Finsler--Laplace operator on $S_0$ is invariant under the involution $x \mapsto -x$, so the functions $v^+(x) = u(x) + u(-x)$ and $v^-(x) = u(x) - u(-x)$ are also eigenfunctions of $\Delta^F$. Either $v^-(x)$ is not zero and we can consider $v^-$ instead of $u$. Otherwise, $v^+ = u$. As $\int_{S_0} v^+ = 0 = \int_{S^+_0} v^+$, there exists an $x_0>0$ such that $v^+(x_0) = 0$. Let 
\begin{equation*}
\tilde{v}^+(x) = \begin{cases}
                  v^+(x)  \quad \text{if } x \geq x_0 \\
		  0   \quad \text{if } 0 \leq x \leq x_0.
                 \end{cases}
\end{equation*}
By definition, we have that $\lambda_1(S_0,F) = R^F(u) = R^F(v^+ ) \geq R^F(\tilde{v}^+)$. So giving a lower bound for $R^F(\tilde{v}^+)$ will give a lower bound for $\lambda_1(S_0,F)$.
\end{proof}

So now, our aim is to minimize 
\[
\frac{\eps^{-1} \int_{C_{L}} \lVert \nabla u \rVert^2 + \frac{1}{2} \int_{S_0^+ \smallsetminus C_L} \lVert \nabla u \rVert^2 }{\int_{S^+_0} u^2 \Omega^g} \,,
\]
where norms, gradient and integrals are taken with respect to the canonical Riemannian metric $g$ on $S_0$.

\begin{claim} \label{claim_energie_sur_cylindre}
 Let $a := u(L)$. We have
\begin{equation*}
 \int_{C_{L}} \lVert \nabla u \rVert^2  \geq \frac{2\pi a^2 }{ L} \,.
\end{equation*}
\end{claim}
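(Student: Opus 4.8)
The plan is to reduce this to a one-dimensional Cauchy--Schwarz estimate, using that $u$ depends only on the $x$-coordinate and that we have arranged (by the previous Claim) for $u$ to vanish at $x=0$.

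First I would write out the integral explicitly in the coordinates $(x,\theta)$ on $C_L$, where $x$ ranges over $[0,L]$ and $\theta$ over $[0,2\pi]$, and the Riemannian metric is the flat product metric $ds^2 = dx^2 + d\theta^2$. Since $u$ is a function of $x$ alone, $\lVert \nabla u \rVert^2 = u'(x)^2$, so
\begin{equation*}
 \int_{C_L} \lVert \nabla u \rVert^2 = \int_0^{2\pi}\!\!\int_0^L u'(x)^2\, dx\, d\theta = 2\pi \int_0^L u'(x)^2\, dx.
\end{equation*}

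Next, using $u(0)=0$ and the fundamental theorem of calculus, $a = u(L) = \int_0^L u'(x)\, dx$, so by the Cauchy--Schwarz inequality
\begin{equation*}
 a^2 = \left( \int_0^L u'(x)\, dx \right)^2 \leq L \int_0^L u'(x)^2\, dx.
\end{equation*}
Combining the two displays gives $\int_{C_L} \lVert \nabla u \rVert^2 \geq 2\pi a^2 / L$, as claimed.

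There is essentially no obstacle here: the only things to be careful about are that $u$ genuinely depends on $x$ only (which holds because $u = \int f\, d\theta$ is rotation-invariant) and that the normalization $u(0)=0$ from the preceding Claim is in force, so that the boundary term in the fundamental theorem of calculus drops out. The factor $2\pi$ is exactly the length of the circle factor of the radius-$1$ cylinder.
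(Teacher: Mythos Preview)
Your proof is correct. It differs from the paper's argument in method: the paper sets up a variational problem, minimizing $v\mapsto\int_{C_L}\lVert\nabla v\rVert^2$ over all functions on the cylinder with boundary values $v(0)=0$, $v(L)=a$, shows the minimizer is harmonic and hence equal to $ax/L$, and then computes its energy directly. Your route is more elementary: you exploit at the outset that $u$ depends on $x$ alone, reduce to a one-dimensional integral, and apply Cauchy--Schwarz. The two arguments are of course close cousins---your Cauchy--Schwarz inequality is exactly the one whose equality case is the linear function the paper finds---but the paper's variational phrasing would still work without the rotational invariance of $u$, whereas your argument uses it from the start (which is perfectly legitimate here, since $u=\int f\,d\theta$).
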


\begin{proof}
We aim to minimize the functional $v \mapsto \int_{C_{L}} \lVert \nabla v \rVert_g^2$ for $v$ such that $v(0) = 0$ and $v(L) = a$.

A function $v$ is minimal if and only if, for any function $w$ such that $w(0)=w(L)=0$, 
\begin{equation*}
 \left. \frac{d}{dt} \right|_{t=0} \int_{C_{L}} \lVert \nabla \left(v + t w\right) \rVert^2 = 0.
\end{equation*}
Direct computation gives
\begin{align*}
 \left. \frac{d}{dt} \right|_{t=0} \int_{C_{L}} \lVert \nabla \left( v + t w \right)\rVert^2 &= \int_{C_{L}} 2 \langle \nabla v, \nabla w \rangle \\
  &= - 2 \int_{C_{L}} \langle \Delta v , w \rangle.
\end{align*}
Therefore $\int_{C_{L}} \lVert \nabla v \rVert^2$ is minimal when $\Delta v = 0$, that is, when
\begin{equation*}
v= \frac{ax}{L}\, .
\end{equation*}
Hence,
\begin{equation*}
  \int_{C_{L}} \lVert \nabla u \rVert^2  \geq \int_{C_{L}} \left\lVert \nabla \left(\frac{ax}{L}\right) \right\rVert^2 = \frac{a^2}{L^2} 2\pi L .\qedhere
\end{equation*}
\end{proof}

\begin{claim}\label{claim_norme_sur_hemisphere}
 Let again $a:= u(h)$. There exists two constants $K>0$ and $K'>0$, depending only on $k$, such that 
\begin{equation*}
 \int_{S^+_0 \smallsetminus C_L} u^2 \leq   k^2 \left( K \left( \int_{S^+_0 \smallsetminus C_L} \lVert \nabla u \rVert^2  \right) + a^2  K'  + 2a \sqrt{\frac{K'}{K}}   \left(\int_{S^+_0 \smallsetminus C_L} \lVert \nabla u \rVert^2 \right)^{1/2} \right).
\end{equation*}
\end{claim}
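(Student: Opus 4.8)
The claim is a Poincar\'e--type inequality on the fixed surface $\Omega := S_0^+ \smallsetminus C_L = C_1 \cup C_2 \cup C_3 \cup S_k$, and the plan is to prove it as such. The first point to record is that, although $\Omega$ is obtained by deleting $C_L$ from $S_0^+$, its isometry type does not depend on $L$: it is a compact connected Riemannian surface with a single boundary component, the circle $\{x = L\}$ of length $2\pi$, and its geometry depends only on $k$ (the lengths $l_1,1,\dots$ and the radii having been fixed once and for all). Moreover, since $u = \int f\, d\theta$ is invariant under the rotations about the $x$-axis, its restriction to $\partial\Omega$ is the constant $a = u(L)$; in particular $\int_{\partial\Omega} u^2 = 2\pi a^2$.

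Next I would invoke the Poincar\'e inequality for functions vanishing on the boundary: there is a constant $c(\Omega) > 0$, namely the reciprocal of the first Dirichlet eigenvalue of $\Omega$, depending only on $(\Omega,g)$ --- hence only on $k$ --- such that every $w \in H^1(\Omega)$ with $w|_{\partial\Omega} = 0$ obeys $\int_\Omega w^2 \leq c(\Omega) \int_\Omega \lVert \nabla w \rVert^2$ (standard, via Rellich compactness). Applied to $w := u - a$, and using $\nabla w = \nabla u$ since $a$ is a constant, this gives $\lVert u - a \rVert_{L^2(\Omega)} \leq c(\Omega)^{1/2} \lVert \nabla u \rVert_{L^2(\Omega)}$. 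The triangle inequality in $L^2(\Omega)$ then yields
\[
 \left( \int_\Omega u^2 \right)^{1/2} \leq c(\Omega)^{1/2} \left( \int_\Omega \lVert \nabla u \rVert^2 \right)^{1/2} + |a|\,|\Omega|^{1/2},
\]
and squaring produces an inequality of exactly the claimed shape, with $c(\Omega)$ in the role of $K$ and $|\Omega|$ in the role of $K'$ up to a $k^2$ normalisation, the cross term $2|a|\, c(\Omega)^{1/2}|\Omega|^{1/2}(\int_\Omega\lVert\nabla u\rVert^2)^{1/2}$ matching $2a\sqrt{K'/K}(\int_\Omega\lVert\nabla u\rVert^2)^{1/2}$ after that normalisation.

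It then remains to pin down the $k$--dependence. The area bound $|\Omega| < 4\pi k^2 + 2\pi(l_1+2) \leq C k^2$ is immediate since $k \geq 1$, and $c(\Omega) \leq C k^2$ follows by scaling: on a surface the first Dirichlet eigenvalue scales like the inverse square of the metric, and after rescaling $\Omega$ by $1/k$ the sphere $S_k$ becomes a large piece of the unit sphere (the removed disk has radius $1/k$) while the cylinders collapse, so $\Omega/k$ has a uniform positive lower bound on its first Dirichlet eigenvalue. A suitable choice of $K$ and $K'$ of size $c(\Omega)/k^2$ and $|\Omega|/k^2$, taken with a little slack so that $k^2\sqrt{K'/K} \geq c(\Omega)^{1/2}|\Omega|^{1/2}$, then gives the statement. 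The only substantive input here is the Poincar\'e inequality on the fixed domain; identifying the boundary value of $u$ and the square-and-triangle manipulation are routine, so the only (minor) obstacle is bookkeeping the constants into the precise form with the weight $\sqrt{K'/K}$ and the factor $k^2$.
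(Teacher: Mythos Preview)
Your proposal is correct and follows essentially the same route as the paper: apply the Dirichlet Poincar\'e inequality on the fixed domain $\Omega = S_0^+ \smallsetminus C_L$ to $w = u - a$, then split $\int u^2$ via $u = (u-a) + a$ and bound the cross term by Cauchy--Schwarz (your triangle-inequality-then-square is the same computation). The paper does not actually justify the precise $k^2$ scaling of the Poincar\'e constant any more than you do---since $k$ is fixed once and for all, only independence from $L$ matters, which you note at the outset.
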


\begin{proof}
 As $u(h)=a$, the function $u - a$ restricted to $S^+_0 \smallsetminus C_L$ is a solution of the Dirichlet problem on $S^+_0 \smallsetminus C_L$. So, there exists some constant $K>0$ such that
\begin{equation*}
  \frac{K}{k^2} \int_{S^+_0 \smallsetminus C_L} (u-a)^2 \leq \int_{S^+_0 \smallsetminus C_L} \lVert \nabla (u-a) \rVert^2  = \int_{S^+_0 \smallsetminus C_L} \lVert \nabla u \rVert^2 .
\end{equation*}

Our aim is to control $\int_{S^+_0 \smallsetminus C_L} u^2 = \int_{S^+_0 \smallsetminus C_L}\left( (u- a )^2 + a^2 + 2a(u-a) \right)$. The only thing left to control is $\int_{S^+_0 \smallsetminus C_L} 2a (u-a)$. Let us choose $K'>0$, depending only on $k$ (which is fixed), such that $\vol\left(S^+_0 \smallsetminus C_L \right) \leq K' k^2$. We obtain that
\begin{align*}
 \int_{S^+_0 \smallsetminus C_L} 2a (u-a) &\leq 2a \left( \int_{S^+_0 \smallsetminus C_L} (u-a)^2 \right)^{1/2} \left( \int_{S^+_0 \smallsetminus C_L} 1 \right)^{1/2} \\
		     &\leq 2a \left(K' k^2\right)^{1/2} \left( \frac{k^2}{K} \int_{S^+_0 \smallsetminus C_L} \lVert \nabla u \rVert^2 \right)^{1/2} \\
		     &\leq 2a \left(\frac{K'}{K}\right)^{1/2} k^2  \left(\int_{S^+_0 \smallsetminus C_L} \lVert \nabla u \rVert^2 \right)^{1/2}.
\end{align*}
We deduce that
\begin{equation*}
 \int_{S^+_0 \smallsetminus C_L} u^2 \leq K k^2 \left( \int_{S^+_0 \smallsetminus C_L} \lVert \nabla u \rVert^2  \right) + a^2  K' k^2  + 2a \sqrt{\frac{K'}{K}} k^2  \left(\int_{S^+_0 \smallsetminus C_L} \lVert \nabla u \rVert^2 \right)^{1/2}. \qedhere
\end{equation*}
\end{proof}

We have now all the ingredients to finish the proof of Lemma \ref{lem_big_eigenvalue_on_sphere}. We suppose that $\int_{S^+_0} u^2 = 1$. Then, either $\int_{C_L} u^2 \geq 1/2$ or $\int_{S^+_0\smallsetminus C_L} u^2 \geq 1/2$.

If $\int_{C_L} u^2 \geq 1/2$, then
\begin{align*}
 \lambda_1(S_0,F) &\geq \frac{\eps^{-1} \int_{C_{L}} \lVert \nabla u \rVert^2 + \frac{1}{2} \int_{S^+_0\smallsetminus C_L} \lVert \nabla u \rVert^2 }{\int_{C_{L}} u^2 + \int_{S^+_0\smallsetminus C_L} u^2 } \\
	   &\geq \frac{\eps^{-1} \int_{C_{L}} \lVert \nabla u \rVert^2 }{2 \int_{C_{L}} u^2 } \\
	   &\geq \frac{\text{cste}}{\eps L^2},
\end{align*}
where the last inequality is obtained because the Rayleigh quotient of a non-constant function on a cylinder of length $L$ is greater than $\text{cste}/L^2$. Therefore, if we choose $\eps < (\text{cste})L^{-2}$, we get that $\lambda_1(S_0,F) \geq 1$. Note that we \emph{can} make such a choice for $\eps$ as it depends only on $\eta$.

So we proved that if $\int_{C_L} u^2 \geq 1/2$, then $\lambda_1(S_0,F) \geq 1 \geq L^{-1/5}$

Now suppose that $\int_{S^+_0\smallsetminus C_L} u^2 \geq 1/2$. Using Claim \ref{claim_energie_sur_cylindre}, we get
\begin{equation*}
 \lambda_1(S_0,F) \geq \frac{\eps^{-1} \int_{C_{L}} \lVert \nabla u \rVert^2 + \frac{1}{2} \int_{S^+_0\smallsetminus C_L} \lVert \nabla u \rVert^2 }{\int_{C_{L}} u^2 + \int_{S^+_0\smallsetminus C_L} u^2 } \geq \eps^{-1} \int_{C_{L}} \lVert \nabla u \rVert^2  \geq \frac{2\pi a^2}{\eps L} \,.
\end{equation*}
So that
\begin{equation*}
 a^2 \leq \frac{\lambda_1(S_0,F) \eps L}{2 \pi} \,.
\end{equation*}
Now, using the Claim \ref{claim_norme_sur_hemisphere}, we obtain
\begin{align*}
 \int_{S^+_0\smallsetminus C_L} u^2 &\leq k^2 \left( K \lambda_1(S_0,F) + a^2  K'  + 2a \left(\frac{K'}{K}\right)^{1/2}  \left(\lambda_1(S_0,F)\right)^{1/2} \right) \\
		&\leq \lambda_1(S_0,F) k^2 \left( K  + K' \frac{\eps L}{2 \pi} + \left(\frac{2 K'\eps L}{\pi K} \right)^{1/2} \right).
\end{align*}
We can choose $\eps$ small enough such that
\begin{equation*}
 \int_{S^+_0\smallsetminus C_L} u^2 \leq \lambda_1(S_0,F) k^2 \left( K + 1 \right).
\end{equation*}
Which yields, as $\int_{S^+_0\smallsetminus C_L} u^2 \geq 1/2$,
\begin{equation*}
 \lambda_1(S_0,F) \geq k^{-2}  \frac{K+1}{2}.
\end{equation*} 
Once again, for $L$ large enough, we have $\lambda_1(S_0,F) \geq L^{-1/5}$. This ends the proof of Lemma \ref{lem_big_eigenvalue_on_sphere}.

\bibliographystyle{amsplaineprint}
\bibliography{article_avec_bruno}

\end{document}